\DeclareFontFamily{U}{matha}{\hyphenchar\font45}
\DeclareFontShape{U}{matha}{m}{n}{
      <5> <6> <7> <8> <9> <10> gen * matha
      <10.95> matha10 <12> <14.4> <17.28> <20.74> <24.88> matha12
      }{}
\DeclareSymbolFont{matha}{U}{matha}{m}{n}
\DeclareMathSymbol{\obot}{2}{matha}{"6B}
\renewcommand{\b}{\beta}
\renewcommand{\d}{\mathrm{d}}								
\renewcommand{\H}{\mathbb{H}}
\renewcommand{\l}{\lambda}
\renewcommand{\L}{\mathbb{L}}
\renewcommand{\o}[1]{\mathbb{O}_{k_{#1}}^{n_{#1}}}
\renewcommand{\O}{\mathbb{O}}
\renewcommand{\S}{\mathbb{S}}
\renewcommand{\ss}[1]{\mathbb{S}_{k_{#1}}^{n_{#1}}}
\renewcommand{\t}{\mathrm{t}}
\DeclareMathOperator{\id}{Id}
\DeclareMathOperator{\spa}{span}
\newcommand{\af}[2]{\alpha_{f}\left(#1,#2\right)}			
\newcommand{\aF}[2]{\alpha_{F}\left(#1,#2\right)}			
\newcommand{\ai}[2]{\alpha_{\imath}\left(#1,#2\right)}		
\newcommand{\al}[2]{\alpha\left(#1,#2\right)}				
\newcommand{\BK}{\mathbf{K}}
\newcommand{\BL}{\mathbf{L}}
\newcommand{\BR}{\mathbf{R}}
\newcommand{\BS}{\mathbf{S}}
\newcommand{\BT}{\mathbf{T}}
\newcommand{\cqd}{{\scriptsize\textbullet}\vspace{1.5ex}}
\newcommand{\cE}{\mathcal{E}}
\newcommand{\cF}{\mathcal{F}}
\newcommand{\e}{\varepsilon}
\newcommand{\E}{\mathbb{E}}
\newcommand{\g}{\gamma}
\newcommand{\G}{\Gamma}
\newcommand{\h}[1]{\mathbb{H}_{k_{#1}}^{n_{#1}}}
\newcommand\hksqrt[2][]{\mathpalette\DHLhksqrtA{{#1}{#2}}}
\def\DHLhksqrtA#1#2{\setbox0=\hbox{$#1\DHLhksqrtB#2$}\dimen0=\ht0
	\advance\dimen0-0.2\ht0
	\setbox2=\hbox{\vrule height\ht0 depth -\dimen0}%
	{\box0\lower0.4pt\box2}%
}
\def\DHLhksqrtB#1#2{\def\a{#1}\def\b{}\ifx\a\b\sqrt{#2\,}\else\sqrt[#1]{#2\,}\fi}
\newcommand{\hO}{\hat{\mathbb{O}}}
\newcommand{\interno}[2]{\left<#1,#2\right>}				
\newcommand{\n}{\nabla}
\newcommand{\nbar}{\bar\nabla}
\newcommand{\nbarperp}{{\bar\nabla}^\perp}
\newcommand{\nperp}{\nabla^\perp}
\newcommand{\ntil}{\tilde{\nabla}}
\newcommand{\op}{\obot}
\newcommand{\R}{\mathbb{R}}
\newcommand{\RN}{{\mathbb{R}^N}}
\newcommand{\set}[2]{\left\{\left. #1 \, \right| \, #2\right\}}
\newcommand{\sss}{\Leftrightarrow}
\newcommand{\TK}{\tilde{\mathbf{K}}}
\newcommand{\TL}{\tilde{\mathbf{L}}}
\newcommand{\TR}{\tilde{\mathbf{R}}}
\newcommand{\TS}{\tilde{\mathbf{S}}}
\newcommand{\TT}{\tilde{\mathbf{T}}}
\newcommand{\U}{\mathcal{U}}
\newcommand{\V}{\mathcal{V}}
\newcommand{\x}{\times}
\newcommand{\XO}{\o{1} \x \cdots \x \o{\ell}}
\newcommand{\z}{\zeta}
\newtheoremstyle{normal}
	{1}
	{1}
	{\slshape}
	{}
	{\bfseries}
	{.}
	{.5em}
	{}
\theoremstyle{definition}
	\newtheorem{df}{Definition}[section]
\theoremstyle{normal}
	\newtheorem{cor}[df]{Corollary}
	\newtheorem{lem}[df]{Lemma}
	\newtheorem{prop}[df]{Proposition}
	\newtheorem{teo}[df]{Theorem}
	\newtheorem{obs}[df]{Remark}
\newenvironment{prova}[1][Proof]{\vspace{1ex}\noindent\textit{#1. }}{\qed \vspace{1ex}}
\newcounter{claim}[df]
\newenvironment{afi}[1]{%
	\vspace{1ex}\noindent %
	\refstepcounter{claim} %
	\underline{Claim \theclaim:} #1 \par
}{ %
	\checkmark
	\par\vspace{1ex}
}
\newenvironment{enum}{
	\begin{enumerate}
	\setlength{\topsep}{0pt}
	\setlength{\itemsep}{0pt}
	\setlength{\parskip}{0pt}
	\setlength{\parsep}{0pt}
}{ %
	\end{enumerate}
}
\title{Isometric immersions in products of many space forms: an introductory study}
\author{Bruno Mendonça Rey dos Santos}
\begin{document}
\maketitle

\begin{abstract}
	This article begins the theory of submanifolds in products of 2 or more space forms. The tensors $\BR$, $\BS$ and $\BT$ defined by Lira, Tojeiro and Vitório at \cite{LTV} and the Bonnet theorem proved by them are generalized for the product of many space forms. Besides, some examples given by Mendonça and Tojeiro at \cite{MT} and the reduction of codimension theorem proved by them are also generalized.
\end{abstract}

\noindent{\bf Key words:} \textit{product of space forms, isometric immersions, reduction of codimension, Bonnet theorem}.

\tableofcontents
	\section{Introduction}
		A \textbf{space form} is a (semi-)riemannian manifold whose sectional curvature is constant. To us, a space form is also connected and simply connected. Thus, up to isometries, we just have three kinds of space forms:
\begin{enumerate}
	\item the euclidean space $\R^n$, which is the space form whose sectional curvature is $0$;
	\item the sphere $\S_k^n$, which is the space form whose sectional curvature $k$ is positive;
	\item and the hyperbolic space $\H_k^n$, which os the space form whose sectional curvature $k$ is negative.
\end{enumerate}
We will use the following models:
\begin{align*}
	\S_k^n = \set{x \in \R^{n+1}}{\|x\|^2 = \frac{1}{k}}, && \text{if} \ k > 0;\\
	\S_k^n = \set{x \in \L^{n+1}}{\|x\|^2 = \frac{1}{k}}, && \text{if} \ k < 0;\\
	\H_k^n = \set{x \in \L^{n+1}}{\|x\|^2 = \frac{1}{k} \ \text{and} \ x_0 > 0}, && \text{if} \ k < 0.
\end{align*}
Here, $\L^{n+1}$ is the vector space $\R^{n+1}$ with a the following inner product: $\interno{x}{y} := -x_0y_0 + \sum\limits_{i=1}^n x_iy_i$. To make things easier, we will denote the space form with dimension $n$ and sectional curvature $k$ by $\o{}$

The authors of \cite{LTV} defined the tensors $\BR$, $\BS$ and $\BT$ associated to an isometric immersion $f \colon M^m \to \o{i}\x\o{2}$. They proved some results involving this tensors and they found the fundamental equations for isometric immersions in products of two space forms. Using these fundamental equations, they proved a Bonnet theorem for isometric immersions in products of two space forms.

The authors of \cite{MT} constructed some examples of isometric immersions in $\o{1}\x\o{2}$, they proved some reduction of codimension results, they classified all parallel immersions in $\o{1}\x\o{2}$ and they reduced the classification of umbilical immersions $f \colon M^m \to \o{1}\x\o{2}$ (with $m \geq 3$ and $k_1+k_2 \ne 0$) to the classification of isometric immersions in $\S^n\x\R$ and $\H^n\x\R$. The umbilical immersions in $\S^n\x\R$ were classified in \cite{MT2}.

The present article is a beginning of studying isometric immersions in product of many space forms, 2 or more. So here the tensors $\BR$, $\BS$ and $\BT$ must be defined for isometric immersions $f \colon M^m \to \XO$. Section 2 takes care of these generalized definitions and of the equations involving the new tensors.

Section 3 generalizes some examples given at \cite{MT} and Section 4 generalizes the reduction of codimension theorem of \cite{MT}.

Finally, the last section of this article generalizes the Bonnet theorem of \cite{LTV} for isometric immersion in $\XO$. 

	\section{Isometric immersions in $\XO$}
		\subsection{The inclusion function $\imath \colon \XO \hookrightarrow \RN$}

Let $k_1$, $\cdots$, $k_\ell$ be real numbers and let $\upsilon, \tau \colon \R \to \{0,1\}$ be the functions given by
\[\upsilon(x) :=
\begin{cases}
	1, \ \text{if} \ x \ne 0;\\
	0, \ \text{if} \ x = 0;
\end{cases} \quad \text{and} \quad
\tau(x) := \begin{cases}
	1, \ \text{if} \ x < 0;\\
	0, \ \text{if} \ x \geq 0.
\end{cases}\]
Let $\R^{N_i} = \R_{\tau(k_i)}^{n_i+\upsilon(k_i)}$, for each $i \in \{1, \cdots, \ell\}$, and let $\RN = \R^{N_1} \x \cdots \x \R^{N_\ell}$. Here, $\R^n_0 = \R^n$ and $\R^n_1 = \L^n$. So there exists a canonical inclusion $\imath \colon \XO \hookrightarrow \RN$ given by $\imath(x_1, \cdots, x_\ell) = (x_1, \cdots, x_\ell)$, where $x_i \in \o{i}\subset \R^{N_i}$.

Lets denote $\hO = \XO$. Then the codimension of $\imath \colon \hO \to \RN$ is the number of elements of the set $J := \set{i \in \{1, \cdots, \ell\}}{k_i \ne 0}$.

\begin{lem}\label{tangentei}
	If $J := \set{i \in \{1, \cdots, \ell\}}{k_i \ne 0}$ and $x = (x_1, \cdots, x_\ell) \in \hO$, with $x_i \in \o{i}$, then
	\[T_x \hO =	\set{X \in \RN}{X_i \perp x_i, \forall i \in J},\]
	where $X = (X_1, \cdots, X_\ell)$, with $X_i \in \R^{N_i}$.
\end{lem}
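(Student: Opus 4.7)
The plan is to reduce to the single-factor case and then use that each non-flat factor $\o{i}$ is (locally, in the hyperbolic case) a level set of the ambient quadratic form $\phi_i(x_i) := \interno{x_i}{x_i}$.

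First I would invoke the standard fact that for a product of (semi-)riemannian manifolds, the tangent space at $x = (x_1,\ldots,x_\ell)$ splits canonically as
\[T_x \hO \;=\; T_{x_1}\o{1} \times \cdots \times T_{x_\ell}\o{\ell},\]
where each factor is regarded as a subspace of $\R^{N_i}$ via the inclusion $\imath$. Hence it is enough to check that, for each $i$,
\[T_{x_i}\o{i} \;=\; \set{X_i \in \R^{N_i}}{X_i \perp x_i} \quad \text{if } i \in J, \qquad T_{x_i}\o{i} = \R^{N_i} \quad \text{if } i \notin J.\]

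Next I would split into two cases. If $i \notin J$, then $k_i = 0$, so $\o{i} = \R^{n_i}$ and $\R^{N_i} = \R^{n_i}$ (since $\upsilon(k_i) = 0$); the tangent space is then the whole $\R^{N_i}$, and no orthogonality condition appears, matching the right-hand side. If $i \in J$, then $\o{i}$ is contained in the level set $\phi_i^{-1}(1/k_i)$ of the smooth function $\phi_i(y) = \interno{y}{y}$ on $\R^{N_i}$ (in the hyperbolic case, $\o{i} = \h{i}$ is one of the two connected components of this level set, which does not affect the tangent space at $x_i$). I would compute $d\phi_i|_{x_i}(X_i) = 2\interno{X_i}{x_i}$, note that $x_i \neq 0$ since $\interno{x_i}{x_i} = 1/k_i \neq 0$, so $d\phi_i|_{x_i}$ is surjective, and therefore
\[T_{x_i}\o{i} \;=\; \ker d\phi_i|_{x_i} \;=\; \set{X_i \in \R^{N_i}}{\interno{X_i}{x_i} = 0}.\]

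Finally I would combine the two cases: an element $X = (X_1,\ldots,X_\ell)$ lies in $T_x\hO$ iff $X_i \in T_{x_i}\o{i}$ for every $i$, which by the above is equivalent to $X_i \perp x_i$ for every $i \in J$ and no condition for $i \notin J$. This is exactly the description in the statement. There is no real obstacle here; the only mildly delicate point is being careful that for $k_i < 0$ with the model $\o{i} = \ss{i} \subset \L^{n_i+1}$ or $\o{i} = \h{i} \subset \L^{n_i+1}$, the ambient bilinear form is Lorentzian, but the implicit-function-theorem argument goes through verbatim since only non-degeneracy of $\interno{\,\cdot\,}{\,\cdot\,}$ at $x_i$ is used.
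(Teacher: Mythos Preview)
Your proof is correct. The paper argues one inclusion directly via curves (if $\beta(0)=x$, $\beta'(0)=X$, differentiate $\|\beta_i(t)\|^2 = 1/k_i$) and then obtains equality by a dimension count; you instead split off the product structure first and invoke the regular-value theorem on each non-flat factor, which yields both inclusions in one stroke. The mathematical content is the same---both compute $d\phi_i|_{x_i}(X_i) = 2\interno{X_i}{x_i}$---but your organization avoids the separate dimension argument at the cost of citing a slightly heavier tool.
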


\begin{proof}
	Let $x \in \hO$ and $X \in T_x \hO$. Thus there is a differentiable curve $\b \colon I \to \O$ such that $\b(0) = x$ and $\b'(0) = X$.

	But $\b(t) = \big( \b_1(t), \cdots, \b_\ell(t) \big)$ and $\|\b_i(t)\|^2 = \frac{1}{k_i}$, for all $i \in J$. Then $\interno{\b_i(t)}{\b_i'(t)} = 0$, for all $i \in J$. It follows that $\interno{x_i}{X_i} = \interno{\b_i(0)}{\b_i'(0)} = 0$, for each $i \in J$. Thus $T_x \hO \subset \set{X \in \RN}{X_i \perp x_i, \forall i \in \{1, \cdots, \ell\}}$.

	Lets consider
	\[V_i = \begin{cases}
		\{x_i\}^\perp, & \text{if} \ i \in J; \\
		\R^{N_i}, & \text{if} \ i \not\in J;
	\end{cases}\]
	where $\{x_i\}^\perp$ is the orthogonal complement of $\spa\{x_i\}$ in $\R^{N_i}$. Thus
	\[\set{X \in \RN}{X_i \perp x_i, \forall i \in J} = V_1 \x \cdots \x V_\ell.\]
	Since $T_x \hO$ and $\set{X \in \RN}{X_i \perp x_i, \forall i \in J}$ have the same dimension, they are the same subspace of $\RN$.
\end{proof}

\begin{lem} \label{lem: pi_j}
	Let $\RN = \R^{N_1} \times \cdots \times \R^{N_\ell}$ and $\pi_i \colon \RN \to \RN$ be the orthogonal projections given by $\pi_i(x_1, \cdots, x_\ell) = (0, \cdots, 0, x_i, 0, \cdots, 0)$. If $k_i \ne 0$, then the field $\left.\pi_i\right|_{\hO} = \pi_i \circ \imath$ is normal to $\hO$.
\end{lem}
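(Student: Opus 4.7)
The plan is to fix a point $x = (x_1, \ldots, x_\ell) \in \hO$ and an arbitrary tangent vector $X = (X_1, \ldots, X_\ell) \in T_x \hO$, and then compute the inner product $\interno{\pi_i(x)}{X}$ in $\RN$ directly, showing it vanishes. Since this is where the hypothesis $k_i \ne 0$ enters, I would begin by noting that $k_i \ne 0$ means $i \in J$, and this is the only fact about $k_i$ we will use.

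First I would unwind the definition: by construction, $\pi_i(x) = (0, \ldots, 0, x_i, 0, \ldots, 0)$, so in the product inner product on $\RN = \R^{N_1} \x \cdots \x \R^{N_\ell}$ we get
\[\interno{\pi_i(x)}{X} = \interno{x_i}{X_i}_{\R^{N_i}}.\]
Then I would invoke Lemma \ref{tangentei}: since $X \in T_x \hO$ and $i \in J$, we have $X_i \perp x_i$ in $\R^{N_i}$, so the inner product above is zero. As $X$ was arbitrary in $T_x \hO$, this shows $\pi_i(x) \in (T_x\hO)^\perp$ at every point $x \in \hO$, which is exactly the statement that the restriction $\pi_i|_{\hO}$ is a normal field.

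There is essentially no obstacle here: the lemma is an immediate consequence of Lemma \ref{tangentei} together with the fact that the projections $\pi_i$ are orthogonal with respect to the product inner product on $\RN$. The only subtlety worth flagging is that in cases where $\R^{N_i} = \L^{n_i+1}$ (i.e.\ $k_i < 0$), the inner product on the $i$-th factor is indefinite; however, the characterization $X_i \perp x_i$ given by Lemma \ref{tangentei} is with respect to this same indefinite form, so the conclusion $\interno{x_i}{X_i} = 0$ still holds as stated, and no separate argument is needed for the Lorentzian factors.
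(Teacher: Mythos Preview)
Your proof is correct and follows essentially the same approach as the paper's own proof: both reduce the orthogonality $\interno{\pi_i(x)}{X} = \interno{x_i}{X_i} = 0$ directly to the description of $T_x\hO$ given in Lemma~\ref{tangentei}. Your version is a bit more explicit about the role of the hypothesis $k_i \ne 0$ (i.e., $i \in J$) and about the indefinite case, but the argument is the same.
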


\begin{proof}
	Let $X \in T_x \hO$ a tangent vector, thus $X_i \perp x_i$ (by Lemma \ref{tangentei}). It follows that $\interno{(\pi_i \circ \imath)(x)}{X} = \interno{x_i}{X_i} = 0$. Therefore $(\pi_i \circ \imath)$ is normal to $\hO$.
\end{proof}

Since the codimension of $\hO$ in $\RN$ is equal to  the number of elements of the set $J := \set{i \in \{1, \cdots, \ell\}}{k_i \ne 0}$, then
\begin{equation}\label{Tiperp}
	T_x^\perp \hO = \spa\set{\pi_i(x)}{i \in J} = \spa\{-k_1 \pi_1(x), \cdots, -k_\ell \pi_\ell(x)\}.
\end{equation}

Now, let $\nbar$ and $\ntil$ be the Levi-Civita connections in $\hO$ and $\RN$ respectively, and let $\alpha_{\imath}$, $\bar{\mathcal{R}}$ and $A_\eta^\imath$ be (respectively) the second fundamental form of $\imath$,  the curvature tensor of $\hO$ and the shape operator in the normal direction $\eta$ given by $\interno{A_\eta^\imath X}{Y} = \interno{\ai{X}{Y}}{\eta}$.

\begin{lem}\label{lem: ai}
	For all $X, Y \in \G \left(T \hO \right)$, it holds that
	\begin{equation}\label{eq: ai}
		\ai{X}{Y} = -\sum_{i=1}^\ell \interno{\pi_i X}{Y}k_i(\pi_i \circ \imath),
	\end{equation}
	 Besides, if $k_i \ne 0$, then $A^{\imath}_{\pi_i \circ \imath} = -\pi_i$.
\end{lem}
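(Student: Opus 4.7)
The plan is to compute the shape operator $A^\imath_{\pi_i\circ\imath}$ first, since that computation is the cleanest, and then recover the full formula for $\alpha_\imath$ from the shape operators by using \eqref{Tiperp} as an expansion basis.

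\textbf{Step 1 (computing the shape operator).} I would use the identity $A^\imath_\eta X = -(\ntil_X \eta)^\top$, where $\ntil$ is the flat product connection on $\RN$ and $(\cdot)^\top$ denotes the tangent component along $\hO$. Since $\pi_i \colon \RN \to \RN$ is a linear map and $\ntil$ is the standard product connection, for $X \in T_x\hO$ we have $\ntil_X(\pi_i \circ \imath) = \pi_i X$. The key observation is that $\pi_i X$ is already tangent to $\hO$: its only nonzero component lies in the $i$-th factor and equals $X_i$, which by Lemma \ref{tangentei} is orthogonal to $x_i$ when $i \in J$, so the criterion of that lemma applies. Hence no projection is needed and $A^\imath_{\pi_i\circ\imath}X = -\pi_i X$, proving the second assertion.

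\textbf{Step 2 (setting up the expansion).} By \eqref{Tiperp}, for every $x \in \hO$ the normal space is spanned by $\{\pi_i \circ \imath : i \in J\}$, so I can write
\[
\ai{X}{Y} = \sum_{i\in J} c_i(X,Y)\,(\pi_i\circ \imath)
\]
for some functions $c_i$. To isolate each $c_i$ I will take the inner product against $\pi_j\circ \imath$. The needed orthogonality,
\[
\interno{\pi_i(x)}{\pi_j(x)} = \frac{\delta_{ij}}{k_i} \quad (i,j\in J),
\]
is immediate because the factors $\R^{N_i}$ are mutually orthogonal in $\RN$ and $\|x_i\|^2 = 1/k_i$. (This is the place where I must be slightly careful, since for $k_i<0$ the vector $\pi_i(x)$ is timelike; but since I am using the genuine bilinear form, the formula still makes sense and inverts correctly.)

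\textbf{Step 3 (extracting the coefficients).} Combining the expansion with the shape operator formula from Step 1 gives
\[
\frac{c_j(X,Y)}{k_j} = \interno{\ai{X}{Y}}{\pi_j\circ \imath} = \interno{A^\imath_{\pi_j\circ\imath}X}{Y} = -\interno{\pi_j X}{Y},
\]
so $c_j(X,Y) = -k_j\interno{\pi_j X}{Y}$. Substituting back yields
\[
\ai{X}{Y} = -\sum_{i\in J} k_i \interno{\pi_i X}{Y}\,(\pi_i\circ \imath),
\]
and since the terms with $i\notin J$ vanish (because $k_i=0$), the sum can be extended to run over $i\in\{1,\dots,\ell\}$, which is exactly \eqref{eq: ai}.

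The whole argument is essentially a direct calculation; the only real subtlety I anticipate is keeping the signs straight in the semi-Riemannian case $k_i<0$, where $\R^{N_i}=\L^{n_i+1}$. This does not affect the algebra, but it is worth checking once that the relation $\|x_i\|^2 = 1/k_i$ indeed produces the correct reciprocal in the dual basis computation.
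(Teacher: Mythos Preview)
Your proposal is correct and follows essentially the same approach as the paper: first compute $A^\imath_{\pi_i\circ\imath}=-\pi_i$ by differentiating the linear map $\pi_i\circ\imath$ and observing that the result is already tangent, then expand $\alpha_\imath$ in the orthogonal frame $\{\pi_i\circ\imath\}_{i\in J}$ of the normal bundle using $\|\pi_i\circ\imath\|^2=1/k_i$. The only difference is presentational---the paper interleaves the two steps rather than separating them---and your remark about the semi-Riemannian sign issue is a welcome extra check.
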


\begin{proof}
	Let $x \in \hO$ and $X \in T_x \hO$. If $k_i \ne 0$, then $X_i \perp x_i$ and $\pi_i X = (0, \cdots, X_i, \cdots, 0) \in T_x \hO$.
	On the other side,
	\[\ntil_X (\pi_i \circ \imath)(x) = \d {\pi_i}_{\imath(x)}\left( \d \imath_x X \right) = \pi_i \imath_*X = \imath_* \pi_i X \ \Rightarrow \ A^{\imath}_{\pi_i \circ \imath} = -\pi_i.\]

	Let $J = \set{i \in \{1, \cdots, \ell\}}{k_i \ne 0}$. Then
	\begin{align*}
		& \ai{X}{Y} = \sum_{i\in J} \interno{\ntil_X \imath_* Y}{\pi_i\circ \imath}\frac{\pi_i\circ \imath}{\|\pi_i\circ \imath\|^2} = \sum_{i\in J} \interno{\ai{X}{Y}}{\pi_i\circ \imath}k_i(\pi_i\circ \imath) = \\
		& = -\sum_{i \in J} \interno{\pi_i X}{Y} k_i(\pi_i\circ \imath) = -\sum_{i=1}^\ell \interno{\pi_i X}{Y} k_i(\pi_i\circ \imath). \qedhere
	\end{align*}
\end{proof}

\begin{lem} \label{lem: curv1}
	$\displaystyle \bar{\mathcal R}(X,Y)Z = \sum\limits_{i=1}^\ell k_i \left(\pi_i X \wedge \pi_i Y \right) Z$, where $(A\wedge B) C := \interno{B}{C}A - \interno{A}{C}B$.
\end{lem}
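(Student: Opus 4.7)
The plan is to derive the formula directly from the Gauss equation for the inclusion $\imath \colon \hO \hookrightarrow \RN$. Since $\RN$ is a (pseudo-)Euclidean space, its curvature tensor $\tilde{\mathcal R}$ vanishes identically. Thus the Gauss equation collapses to
\[
\bar{\mathcal R}(X,Y)Z = A^{\imath}_{\ai{Y}{Z}} X - A^{\imath}_{\ai{X}{Z}} Y
\]
for all tangent fields $X,Y,Z \in \G(T\hO)$.

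Next I would plug in the data produced by Lemma \ref{lem: ai}. Using
\[
\ai{Y}{Z} = -\sum_{i=1}^\ell k_i \interno{\pi_i Y}{Z}(\pi_i \circ \imath),
\]
the linearity of $\eta \mapsto A^\imath_\eta$, and the identity $A^{\imath}_{\pi_i \circ \imath} = -\pi_i$ valid for $i \in J$ (and irrelevant for $i \notin J$ because the corresponding coefficient $k_i$ vanishes), the two minus signs cancel and I obtain
\[
A^{\imath}_{\ai{Y}{Z}} X = \sum_{i=1}^\ell k_i \interno{\pi_i Y}{Z}\pi_i X.
\]
Swapping the roles of $X$ and $Y$ gives the analogous expression for the second term, and the difference is exactly $\sum_i k_i \bigl( \interno{\pi_i Y}{Z}\pi_i X - \interno{\pi_i X}{Z}\pi_i Y \bigr)$, which by the definition of the wedge operator equals $\sum_i k_i (\pi_i X \wedge \pi_i Y) Z$.

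There is no serious obstacle here: the argument is a bookkeeping exercise once the shape operators of $\imath$ have been identified in Lemma \ref{lem: ai}. The only points deserving mild care are tracking signs in the pseudo-Riemannian Gauss equation (so that it is consistent with the Weingarten convention implicit in the definition $\interno{A^\imath_\eta X}{Y} = \interno{\ai{X}{Y}}{\eta}$) and noting that terms with $i \notin J$ contribute zero, which is what allows the sum to be written cleanly over all indices from $1$ to $\ell$.
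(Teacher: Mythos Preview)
Your proposal is correct and follows essentially the same approach as the paper: apply the Gauss equation for the flat ambient $\RN$, substitute the expression for $\ai{\cdot}{\cdot}$ and the identity $A^\imath_{\pi_i\circ\imath}=-\pi_i$ from Lemma~\ref{lem: ai}, and simplify. The paper's proof is just a slightly more explicit version of the same computation.
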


\begin{proof}
	Using Gauss equation,
	\begin{align*}
		& \bar{\mathcal R}(X,Y)Z = A_{\ai{Y}{Z}}^{\imath} X - A_{\ai{X}{Z}}^{\imath} Y = \\
		& \stackrel{\eqref{eq: ai}}{=} -\sum_{i=1}^\ell \interno{\pi_i Y}{Z} A^{\imath}_{k_i(\pi_i \circ \imath)} X + \sum_{i=1}^\ell \interno{\pi_i X}{Z} A^{\imath}_{k_i(\pi_i \circ \imath)} Y = \\
		& = \sum_{i=1}^\ell \interno{\pi_i Y}{Z} k_i \pi_i X - \sum_{i=1}^\ell \interno{\pi_i X}{Z} k_i \pi_i Y = \\
		&= \sum_{i=1}^\ell k_i\left[\interno{\pi_i Y}{Z} \pi_i X - \interno{\pi_i X}{Z} \pi_i Y \right]. \qedhere
	\end{align*}
\end{proof}

\begin{lem}\label{pi_i parallel}
	The tensor $\pi_i \colon T \hO \to T \hO$ is parallel, that is, $(\n_X \pi_i) Y = 0, \ \forall X, Y \in \G\left(T \hO \right)$.
\end{lem}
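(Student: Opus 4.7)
The plan is to leverage two facts: (i) $\pi_i$, viewed as a linear endomorphism of $\RN$, is constant and hence parallel with respect to the flat connection $\ntil$; (ii) $\pi_i$ preserves tangency to $\hO$. Then Gauss's formula will convert the triviality in $\RN$ into the desired identity on $\hO$, with the ``error'' term turning out to be purely normal.

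First I would check that $\pi_i Y \in \G(T\hO)$ whenever $Y \in \G(T\hO)$. By Lemma \ref{tangentei}, $Y$ tangent means $Y_j \perp x_j$ for every $j \in J$. Since $\pi_i Y = (0,\ldots,Y_i,\ldots,0)$, the only nonzero component is the $i$-th, which still satisfies $Y_i \perp x_i$ when $i \in J$ (and if $i \notin J$ there is no constraint to check). Hence Lemma \ref{tangentei} applies and $\pi_i Y$ is tangent, so Gauss's formula makes sense for it.

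Next, because $\pi_i \colon \RN \to \RN$ is a constant linear map, $\ntil_X (\pi_i Y) = \pi_i (\ntil_X Y)$. Applying Gauss's formula to both sides yields
\[
\nbar_X(\pi_i Y) + \ai{X}{\pi_i Y} = \pi_i (\nbar_X Y) + \pi_i\bigl(\ai{X}{Y}\bigr).
\]
Now I would compute $\pi_i\bigl(\ai{X}{Y}\bigr)$ using Lemma \ref{lem: ai}: since $\pi_i \circ \pi_j = \delta_{ij}\pi_i$ as endomorphisms of $\RN$, only the $j=i$ summand survives, giving
\[
\pi_i\bigl(\ai{X}{Y}\bigr) = -\interno{\pi_i X}{Y}\, k_i\,(\pi_i \circ \imath).
\]
If $k_i \ne 0$, this vector is normal to $\hO$ by Lemma \ref{lem: pi_j}; if $k_i = 0$, it simply vanishes. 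Either way it is \emph{purely normal}, so taking tangential components of the displayed equation gives $\nbar_X(\pi_i Y) = \pi_i(\nbar_X Y)$, which is precisely $(\nbar_X \pi_i) Y = 0$.

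There is no serious obstacle here; the only point that needs care is the verification that $\pi_i Y$ remains tangent (so that Gauss's formula is applicable on the left-hand side) and the bookkeeping with $\pi_i \circ \pi_j$ that isolates a single term in the normal sum, confirming it is indeed normal. The whole argument is morally just the statement that the parallel transport in $\RN$ respects the factor decomposition, and this decomposition is compatible with $\hO$.
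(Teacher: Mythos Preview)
Your proof is correct and follows essentially the same route as the paper: both use that $\pi_i$ is a constant linear map on $\RN$ (hence commutes with $\ntil$), apply the Gauss formula for $\imath$, and invoke Lemma~\ref{lem: ai} to see that the discrepancy term $-\interno{\pi_i X}{Y}k_i(\pi_i\circ\imath)$ is purely normal, leaving $\nbar_X(\pi_i Y)=\pi_i(\nbar_X Y)$. The only cosmetic difference is that the paper starts from $\imath_*\pi_i\nbar_X Y$ and computes forward, while you start from $\ntil_X(\pi_i Y)=\pi_i(\ntil_X Y)$ and project tangentially.
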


\begin{proof}
	Here we use $\pi_i$ to denote the orthogonal projection $\pi_i \colon \RN \to \RN$ and also the restrictions $\left.\pi_i\right|_{\hO} \colon \hO \to \hO$ and $\left.\pi_i\right|_{T_x \hO} \colon T_x \hO \to T_x \hO$.
	
	Let $X,Y \in \G\left(T \hO \right)$, thus
	\begin{align*}
		& \imath_* \pi_i\nbar_X Y = \pi_i \left[\imath_* \nbar_X Y \right] = \pi_i \left[ \ntil_X \imath_*Y \right] - \pi_i \left[\ai{X}{Y} \right] = \\
		& = \pi_i \left[ \ntil_X \imath_*Y \right] - \pi_i\left[ -\sum_{j=1}^\ell \interno{\pi_j X}{Y}k_j(\pi_j \circ \imath) \right] = \\
		&= \pi_i \left[ \ntil_X \imath_*Y \right] + \interno{\pi_i X}{Y}k_i(\pi_i \circ \imath).
	\end{align*}
	
	But $Y \colon \hO \to \RN$ and $\imath_* Y = Y$. Thus
	\begin{align*}
		& \pi_i \left[ \ntil_X \imath_*Y \right] = \pi_i\left[\d (\imath_* Y) X \right] = \pi_i\left[\d Y \cdot X \right] = \d \left(\pi_i \circ Y \right)\cdot X = \ntil_X \pi_i Y = \\
		& = \ntil_X \imath_* \pi_i Y = \imath_* \nbar_X \pi_i Y + \ai{X}{\pi_i Y} \stackrel{\eqref{eq: ai}}{=} \imath_* \nbar_X \pi_i Y - \interno{\pi_i X}{\pi_i Y}k_i (\pi_i \circ \imath) = \\
		& = \imath_* \nbar_X \pi_i Y - \interno{\pi_i X}{Y}k_i (\pi_i \circ \imath).
	\end{align*}
	Therefore $\imath_* \pi_i\nbar_X Y = \imath_* \nbar_X \pi_i Y$,  and hence $\left( \n_X \pi_i \right) Y = 0$.
\end{proof}

		\subsection{The $\BR_i$, $\BS_i$ and $\BT_i$ tensors}

Let $M^m$ be a riemannian manifold and $f\colon M^m \to \hO$ be an isometric immersion. We will call $\mathcal R$ and $\mathcal{R}^\perp$ the curvature tensors on the tangent bundle ($TM$) and on the normal bundle ($T^\perp M$) of $f$, respectively. Let also $\alpha := \alpha_f \colon (TM \x TM) \to T_f^\perp M$ be the second fundamental form of $f$ and $A_\eta := A^f_\eta \colon TM \to TM$ be the shape operator in the normal direction $\eta$, given by $\interno{A_\eta X}{Y} = \interno{\al{X}{Y}}{\eta}$, for all $X, Y \in TM$.

\begin{df}
	Let \index{$\BL_i$} $\BL_i \colon TM \to T \hO$, $\BK_i\index{$\BK_i$} \colon T^\perp M \to T \hO$, \index{$\BR_i$} $\BR_i \colon TM \to TM$, \index{$\BS_i$} $\BS_i \colon TM \to T^\perp M$ and \index{$\BT_i$} $\BT_i \colon T^\perp M \to T^\perp M$ be given by
	\begin{center}
	\begin{tabular}{ll}
		$\BL_i := \BL_i^f := \pi_i\circ {f}_*$; & $\BK_i := \BK_i^{f} := \left.\pi_i\right|_{T^\perp M}$; \\
		$\BR_i := \BR_i^f := \BL_i^\t\BL_i$ ; & $\BS_i := \BS_i^f := \BK_i^\t\BL_i$; \\
		$\BT_i := \BT_i^f := \BK_i^\t \BK_i$.
	\end{tabular}
	\end{center}
\end{df}

Lets remark that $\BR_i$ and $\BT_i$ are self-adjoint tensors. Besides, given $X \in T_xM$, $\xi \in T_x^\perp M$ and $Z \in T_{f(x)} \hO$, then
\[\begin{cases}
	\interno{\BL_i^\t Z}{X} = \interno{Z}{\BL_i X} = \interno{Z}{\pi_if_*X} = \interno{\pi_i Z}{f_*X} = \interno{\left(\pi_i Z \right)^T}{f_* X}, \\
	\interno{\BK_i^\t Z}{\xi} = \interno{Z}{\BK_i \xi} = \interno{Z}{\pi_i\xi} = \interno{\pi_i Z}{\xi} = \interno{\left(\pi_i Z \right)^\perp}{\xi},
\end{cases}\]
where $\left( \pi_i Z \right)^T$ and $\left( \pi_i Z \right)^\perp$ are the orthogonal projections of $\pi_i Z$ on $TM$ and $T^\perp M$, respectively. Thus $f_* \BL_i^\t Z = \left(\pi_i Z \right)^T$ and $\BK_i^\t Z = \left(\pi_i Z \right)^\perp$. Consequently
\begin{align*}
	& f_* \BR_i X = f_* \BL_i^\t\BL_i X = \left( \pi_i \BL_i X \right)^T = (\BL_i X)^T, \\
	& \BS_i X = \BK_i^\t\BL_i X = \left( \pi_i \BL_i X \right)^\perp = (\BL_i X)^\perp, \\
	& f_*\BS_i^\t \xi = f_* \BL_i^\t\BK_i \xi = \left( \pi_i \BK_i \xi \right)^T = (\BK_i \xi)^T, \\
	& \BT_i \xi = \BK_i^\t\BK_i \xi = \left( \pi_i \BK_i \xi \right)^\perp = (\BK_i \xi)^\perp.
\end{align*}
Therefore
\begin{equation}\label{eq: K L}
	\pi_if_* X = \BL_i X = f_*\BR_i X + \BS_i X \quad \text{and} \quad \pi_i \xi = \BK_i \xi = f_*\BS_i^\t \xi + \BT_i \xi.
\end{equation}


\begin{obs} If $X \in T_x M$ and $\xi \in T_x^\perp M$, then
	\begin{align*}
		& f_* \BR_i X + \BS_i X = \pi_i f_* X = f_* X  - \sum_{\substack{j=1\\ j\ne i}}^\ell \pi_j f_* X = f_* X - \sum_{\substack{j=1\\ j\ne i}}^\ell \left( f_* \BR_j X + \BS_j X \right). \\
		& f_* \BS_i^\t \xi + \BT_i \xi = \pi_i \xi = \xi - \sum_{\substack{j=1\\ j\ne i}}^\ell \pi_j \xi = \xi - \sum_{\substack{j=1\\ j\ne i}}^\ell \left( f_* \BS_j^\t \xi + \BT_j \xi \right).
	\end{align*}
	Thus
	\begin{align*}
		\BR_i &= \id - \sum_{\substack{j=1\\ j\ne i}}^\ell \BR_j, & \BS_i &= - \sum_{\substack{j=1\\ j\ne i}}^\ell \BS_j, & \BT_i &= \id - \sum_{\substack{j=1\\ j\ne i}}^\ell \BT_j,
	\end{align*}
	that is,
	\begin{align}\label{somas}
		\sum_{i=1}^\ell \BR_i &= \id|_{T_x M}, & \sum_{i=1}^\ell \BS_i &= 0, & \sum_{i=1}^\ell \BT_i &= \id|_{T_x^\perp M}.
	\end{align}
\end{obs}

\begin{lem}\label{lem: R S T}
	The following equations hold:
	\begin{align}
		 & \BS_i^\t\BS_i = \BR_i(\id - \BR_i), && \BT_i\BS_i = \BS_i( \id - \BR_i),  && \BS_i\BS_i^\t = \BT_i(\id - \BT_i), \label{eq: RST}\\
		 & \BS_i^\t\BS_j \stackrel{i \ne j}{=} -\BR_i\BR_j, && \BT_i \BS_j \stackrel{i \ne j}{=} -\BS_i\BR_j, && \BS_i \BS_j^\t \stackrel{i\ne j}{=} -\BT_i\BT_j. \label{eq: RST2}
	\end{align}
\end{lem}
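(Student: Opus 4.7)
The plan is to exploit two elementary properties of the orthogonal projection $\pi_i \colon \RN \to \RN$, namely $\pi_i^2 = \pi_i$ and $\pi_i \pi_j = 0$ for $i \ne j$, together with the decomposition formulas \eqref{eq: K L}, and then read off the six identities by separating tangent and normal components of the resulting equalities in $T_{f(x)} \hO$.

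For the three identities in \eqref{eq: RST}, I would apply $\pi_i$ to both sides of the first formula in \eqref{eq: K L}. Since $\pi_i^2 = \pi_i$, the left-hand side becomes $\pi_i f_* X = f_* \BR_i X + \BS_i X$, while the right-hand side, after applying $\pi_i$ term by term and re-using \eqref{eq: K L} on $\pi_i f_* \BR_i X$ and $\pi_i \BS_i X$, expands as $f_*(\BR_i^2 + \BS_i^\t \BS_i) X + (\BS_i \BR_i + \BT_i \BS_i) X$. Comparing tangent and normal parts yields $\BS_i^\t \BS_i = \BR_i(\id - \BR_i)$ and $\BT_i \BS_i = \BS_i(\id - \BR_i)$. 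The same procedure applied to the second formula in \eqref{eq: K L}, namely $\pi_i \xi = f_* \BS_i^\t \xi + \BT_i \xi$, gives
$$f_* \BS_i^\t \xi + \BT_i \xi = f_*(\BR_i \BS_i^\t + \BS_i^\t \BT_i)\xi + (\BS_i \BS_i^\t + \BT_i^2)\xi,$$
and the normal part produces the remaining identity $\BS_i \BS_i^\t = \BT_i(\id - \BT_i)$ (the tangent part is just the adjoint of $\BT_i \BS_i = \BS_i(\id - \BR_i)$, which is a sanity check).

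The cross identities in \eqref{eq: RST2} follow the same strategy with $\pi_i \pi_j = 0$ in place of $\pi_i^2 = \pi_i$. Applying $\pi_i$ to $\pi_j f_* X = f_* \BR_j X + \BS_j X$ forces $0 = f_*(\BR_i \BR_j + \BS_i^\t \BS_j) X + (\BS_i \BR_j + \BT_i \BS_j) X$, whose tangent and normal components give $\BS_i^\t \BS_j = -\BR_i \BR_j$ and $\BT_i \BS_j = -\BS_i \BR_j$; applying $\pi_i$ to $\pi_j \xi = f_* \BS_j^\t \xi + \BT_j \xi$ and extracting the normal part yields $\BS_i \BS_j^\t = -\BT_i \BT_j$.

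There is no real obstacle here — the entire argument is algebraic bookkeeping based on the idempotency and mutual orthogonality of the projections $\pi_i$. The only care needed is to consistently split each expression $\pi_i(\cdot)$ into its tangent and normal components via \eqref{eq: K L}, and to note that the identification $f_* \colon TM \to f_*TM$ allows one to read tangent equalities as identities of endomorphisms on $TM$.
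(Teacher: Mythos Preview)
Your proposal is correct and follows essentially the same approach as the paper's own proof: both use the idempotency $\pi_i^2 = \pi_i$ and the orthogonality $\pi_i\pi_j = 0$ applied to the decomposition formulas \eqref{eq: K L}, then separate tangent and normal components to read off each identity. There is no meaningful difference in method or presentation.
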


\begin{proof}
	Let $X, Y \in TM$ and $\xi \in T^\perp M$. Hence
	\begin{multline*}
	f_* \BR_i X + \BS_i X = \pi_if_* X = {\pi_i}^2 f_* X = \pi_i (f_*\BR_i X + \BS_i X) = \\
	= f_* \BR_i^2 X + \BS_i\BR_i X + f_*\BS_i^\t\BS_i X + \BT_i\BS_i X.
	\end{multline*}
	
	Consequently, $\BR_i X = \BR_i^2 X + \BS_i^\t\BS_i X$ and $\BS_i X = \BS_i\BR_i X + \BT_i\BS_i X$. Therefore $\BS_i^\t\BS_i = \BR_i(\id - \BR_i)$ and $\BT_i\BS_i = \BS_i(\id - \BR_i)$.
	
	Analogously,
	\begin{multline*}
	f_* \BS_i^\t \xi + \BT_i \xi = \pi_i \xi = {\pi_i}^2 \xi = \pi_i \left(f_* \BS_i^\t\xi + \BT_i\xi \right) = \\
	= f_* \BR_i\BS_i^\t \xi + \BS_i\BS_i^\t \xi + f_*\BS_i^\t\BT_i \xi + \BT_i^2 \xi.
	\end{multline*}
	Therefore, $\BT_i \xi = \BS_i\BS_i^\t \xi + \BT_i^2 \xi$ and $\BS_i\BS_i^\t = \BT_i(\id - \BT_i)$.
	
	On the other side,
	\begin{multline*}
	0 = \pi_i \pi_j f_* X = \pi_i \left( f_* \BR_j X + \BS_j X \right) = \\
	= f_* \BR_i\BR_j X + \BS_i\BR_j X + f_* \BS_i^\t\BS_j X + \BT_i\BS_j X.
	\end{multline*}
	Thus $\BS_i^\t\BS_j = -\BR_i\BR_j$ and $\BT_i\BS_j = -\BS_i\BR_j$.
	
	Analogously,
	\[0  = \pi_i \pi_j \xi = \pi_i \left( f_* \BS_j^\t \xi + \BT_j \xi \right) = f_* \BR_i\BS_j^\t \xi + \BS_i\BS_j^\t \xi + f_* \BS_i^\t\BT_j \xi + \BT_i\BT_j \xi.\]
	Hence $\BS_i^\t\BT_j = -\BR_i\BS_j^\t$ and $\BT_i\BT_j = -\BS_i\BS_j^\t$.
\end{proof}

\begin{obs}\label{observação: autovalor R}
	If $\l$ is an eigenvalue of $\BR_i$ and $X$ is an eigenvector associated to $\l$, then
	\[ \l (1 - \l) \|X\|^2 = \interno{\BR_i (\id - \BR_i) X}{X}\stackrel{\eqref{eq: RST}}{=} \interno{\BS_i^\t\BS_i X}{X} = \| \BS_i X\|^2 \geq 0.\]
	Therefore $\l \in [0,1]$. Analogously, using the third equation of \eqref{eq: RST}, the eigenvectors of $\BT_i$ are also in $[0,1]$.
\end{obs}

\begin{lem}
	The following equations hold:
	\begin{align}
		& (\n_X \BR_i)Y = A_{\BS_i Y}X + \BS_i^\t \al{X}{Y}, \label{eq: derivada R} \\
		& \left. \begin{aligned}
			& (\n_X \BS_i)Y = \BT_i \al{X}{Y} - \al{X}{\BR_i Y}, \\
			& \big(\n_X \BS_i^\t \big) \xi = A_{\BT_i \xi} X - \BR_i A_\xi X,
		\end{aligned}\right\} \label{eq: derivada S} \\
		& (\n_X \BT_i)\xi = -\BS_i A_\xi X - \al{X}{\BS_i^\t\xi}. \label{eq: derivada T}
	\end{align}
\end{lem}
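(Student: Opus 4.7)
The plan is to prove all four identities in a unified way by differentiating the decomposition relations \eqref{eq: K L} with the ambient connection $\nbar$ of $\hO$, using the fact from Lemma \ref{pi_i parallel} that $\pi_i$ is parallel on $\hO$, and then separating tangent and normal parts.

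First, I would take $\nbar_X$ of the identity $\pi_i f_* Y = f_* \BR_i Y + \BS_i Y$. On the left, parallelism of $\pi_i$ gives $\pi_i\,\nbar_X f_* Y$, and the Gauss formula $\nbar_X f_* Y = f_* \n_X Y + \al{X}{Y}$ together with both parts of \eqref{eq: K L} applied to $\n_X Y \in TM$ and $\al{X}{Y} \in T^\perp M$ rewrite this as
\[
f_*\BR_i\n_X Y \;+\; \BS_i\n_X Y \;+\; f_*\BS_i^\t\al{X}{Y} \;+\; \BT_i\al{X}{Y}.
\]
On the right, Gauss gives $\nbar_X(f_*\BR_i Y) = f_*\n_X(\BR_i Y) + \al{X}{\BR_i Y}$ and Weingarten gives $\nbar_X(\BS_i Y) = -f_*A_{\BS_i Y}X + \nperp_X\BS_i Y$. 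Comparing the tangent parts (which are $f_*$-images) yields $\BR_i\n_X Y + \BS_i^\t\al{X}{Y} = \n_X(\BR_i Y) - A_{\BS_i Y}X$, which is \eqref{eq: derivada R}; comparing the normal parts yields the first half of \eqref{eq: derivada S}.

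Second, I would repeat the procedure with $\pi_i\xi = f_*\BS_i^\t\xi + \BT_i\xi$ for $\xi \in \G(T^\perp M)$. The left side becomes $\pi_i\,\nbar_X\xi$, and expanding $\nbar_X\xi = -f_*A_\xi X + \nperp_X\xi$ via Weingarten and applying $\pi_i$ through \eqref{eq: K L} to each piece gives
\[
-f_*\BR_i A_\xi X \;-\; \BS_i A_\xi X \;+\; f_*\BS_i^\t\nperp_X\xi \;+\; \BT_i\nperp_X\xi.
\]
The right side expands, by Gauss applied to $f_*\BS_i^\t\xi$ and Weingarten applied to $\BT_i\xi$, as $f_*\n_X(\BS_i^\t\xi) + \al{X}{\BS_i^\t\xi} - f_*A_{\BT_i\xi}X + \nperp_X(\BT_i\xi)$. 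Matching tangent components gives the second half of \eqref{eq: derivada S}, and matching normal components gives \eqref{eq: derivada T}.

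The argument is essentially bookkeeping; the main obstacle is purely notational, namely keeping straight which terms are tangent to $f_*(TM)$ and which lie in $T^\perp M$ inside $T\hO$, and then using the definitions $(\n_X\BR_i)Y = \n_X(\BR_i Y) - \BR_i\n_X Y$, $(\n_X\BS_i)Y = \nperp_X(\BS_i Y) - \BS_i\n_X Y$, $(\n_X\BS_i^\t)\xi = \n_X(\BS_i^\t\xi) - \BS_i^\t\nperp_X\xi$, and $(\n_X\BT_i)\xi = \nperp_X(\BT_i\xi) - \BT_i\nperp_X\xi$ to read off each of the four formulas. The only nontrivial input is the parallelism of $\pi_i$ from Lemma \ref{pi_i parallel}, which collapses the whole computation to equating the two expansions coming from the two sides of \eqref{eq: K L}.
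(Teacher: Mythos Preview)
Your proposal is correct and follows essentially the same approach as the paper: both differentiate the identities in \eqref{eq: K L} using the parallelism of $\pi_i$ from Lemma \ref{pi_i parallel}, expand via the Gauss and Weingarten formulas, and then separate tangent and normal components. The only cosmetic difference is that the paper writes the computation as $0 = (\nbar_{f_*X}\pi_i)(f_*Y)$ and $0 = (\nbar_{f_*X}\pi_i)\xi$ and collects terms, whereas you phrase it as equating two expansions of $\nbar_X$ applied to each side of \eqref{eq: K L}; these are the same argument.
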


\begin{proof}
	Let $X,Y \in \G(TM)$ and $\xi \in \G\left( T^\perp M \right)$. Because of Lemma \ref{pi_i parallel}, we have that
	\begin{align*}
		&0 = \left( \n_{f_*X} \pi_i \right) f_* Y = \nbar_{f_* X} \pi_if_* Y - \pi_i \nbar_{f_* X} f_* Y = \\
		& = \nbar_X (f_* \BR_i Y + \BS_i Y) - \pi_i \left[ f_* \n_X Y + \al{X}{Y} \right] = \\
		& = \nbar_X f_* \BR_i Y + \nbar_X \BS_i Y - f_*\BR_i \n_X Y - \BS_i \n_X Y - \\
		& \quad -  f_*\BS_i^\t\al{X}{Y} - \BT_i \al{X}{Y} = \\
		& = f_* \n_X \BR_i Y + \al{X}{\BR_i Y} - f_*A_{\BS_i Y} X + \nperp_X \BS_i Y - \\
		& \quad - f_*\left[\BR_i \n_X Y + \BS_i^\t\al{X}{Y} \right] - \BS_i \n_X Y - \BT_i \al{X}{Y} = \\
		& = f_* \left[ \left(\n_X \BR_i \right) Y - A_{\BS_i Y} X - \BS_i^\t \al{X}{Y} \right] + \al{X}{\BR_i Y}  + \\
		& \quad + \left( \n_X \BS_i \right) Y - \BT_i \al{X}{Y}.
	\end{align*}
	Therefore
	\[\left(\n_X \BR_i \right) Y = A_{\BS_i Y} X + \BS_i^\t \al{X}{Y} \ \ \text{and} \ \ \left( \n_X \BS_i \right) Y = \BT_i \al{X}{Y} - \al{X}{\BR_i Y}.\]

	Analogously,
	\begin{align*}
		& 0 = \left( \n_{f_*X} \pi_i \right) \xi = \\
		&= \nbar_{f_* X} \pi_i\xi - \pi_i \nbar_{f_* X} \xi = \nbar_{f_* X} \left( f_* \BS_i^\t\xi + \BT_i \xi \right) - \pi_i \left( - f_* A_\xi X + \nperp_X \xi \right) = \\
		& = \nbar_X f_* \BS_i^\t\xi + \nbar_X \BT_i\xi + f_*\BR_i A_{\xi} X + \BS_i A_{\xi} X - f_* \BS_i^\t\nperp_X \xi - \BT_i \nperp_X \xi  = \\
		& = f_*\n_X \BS_i^\t\xi + \al{X}{\BS_i^\t \xi} - f_*A_{\BT_i\xi} X + \nperp_X \BT_i\xi + f_*\left( \BR_i A_\xi X - \BS_i^\t\nperp_X \xi \right) + \\
		& \quad + \BS_i A_{\xi} X - \BT_i \nperp_X \xi  = \\
		& = f_* \left[ \left( \n_X \BS_i^\t \right)\xi - A_{\BT_i \xi} X + \BR_i A_\xi X \right] + \al{X}{\BS_i^\t \xi} + \left(\n_X \BT_i\right) \xi + \BS_i A_\xi X.
	\end{align*}
	Therefore
	\[\left(\n_X \BS_i^\t\right)\xi = A_{\BT_i \xi} X - \BR_i A_\xi X \quad \text{and} \quad \left(\n_X \BT_i\right) \xi = - \BS_i A_\xi X - \al{X}{\BS_i^\t \xi}. \qedhere\]
\end{proof}

\begin{obs}
	Making the inner product of both sides of the first equation in \eqref{eq: derivada S} by $\xi$, and comparing to the inner product of both sides of the second equation in \eqref{eq: derivada S} by $Y$, we can conclude that both equations in \eqref{eq: derivada S} are equivalent. We decided to write down both in order to use the most convenient form wen we need it.
\end{obs}

\subsubsection{Gaus equation}

	\begin{align*}
		& f_* \mathcal{R}(X,Y)Z = \left( \bar{\mathcal R}(f_*X,f_*Y)f_*Z \right)^T + f_*A_{\al{Y}{Z}}X - f_*A_{\al{X}{Z}}Y \\
		& \stackrel{\text{Lemma \ref{lem: curv1}}}{=} \left( \sum_{i=1}^\ell k_i \left[ \interno{\pi_if_*Y}{f_*Z}\pi_if_*X - \interno{\pi_if_*X}{f_*Z}\pi_if_*Y \right] \right)^T + \\
		& \quad + f_*A_{\al{Y}{Z}}X - f_*A_{\al{X}{Z}}Y = \\
		& = \sum_{i=1}^\ell k_i \left[ \interno{\BR_i Y}{Z}f_*\BR_i X - \interno{\BR_i X}{Z}f_*\BR_i Y \right] + f_*A_{\al{Y}{Z}}X - f_*A_{\al{X}{Z}}Y.
	\end{align*}
	Therefore
	\begin{equation}\label{eq: Gauss}
		\mathcal{R}(X,Y)Z = \sum_{i=1}^\ell k_i \left( \BR_i X \wedge \BR_i Y\right)Z + A_{\al{Y}{Z}}X - A_{\al{X}{Z}}Y,
	\end{equation}
	where $(A \wedge B) C := \interno{B}{C} A - \interno{A}{C} B$.

\subsubsection{Codazzi equation}
	\begin{align*}
		& \left(\nabla^\perp_X \alpha \right)(Y,Z) - \left(\nabla^\perp_Y \alpha \right)(X,Z) = \left(\bar{\mathcal R}(f_*X,f_*Y)f_*Z \right)^\perp = \\
		& \stackrel{\text{Lemma \ref{lem: curv1}}}{=} \left( \sum_{i=1}^\ell k_i \left[ \interno{\pi_if_*Y}{f_*Z}\pi_if_*X - \interno{\pi_if_*X}{f_*Z}\pi_if_*Y \right] \right)^\perp = \\
		& = \sum_{i=1}^\ell k_i \left[ \interno{\BR_i Y}{Z}\BS_i X - \interno{\BR_i X}{Z}\BS_i Y \right].
	\end{align*}

	\begin{align*}
		& f_* (\n_Y A)(X,\xi) - f_*(\n_X A)(Y, \xi) = \left(\bar{\mathcal R}(f_*X,f_*Y)\xi \right)^\mathrm{T} = \\
		& \stackrel{\text{Lemma \ref{lem: curv1}}}{=} \left( \sum_{i=1}^\ell k_i \left[ \interno{\pi_if_*Y}{\xi}\pi_if_*X - \interno{\pi_if_*X}{\xi}\pi_if_*Y \right] \right)^\mathrm{T} = \\
		& = \sum_{i=1}^\ell k_i \left[ \interno{\BS_i Y}{\xi}f_*\BR_i X - \interno{\BS_i X}{\xi}f_*\BR_i Y \right].
	\end{align*}

	Therefore,
	\begin{equation}\label{eq: Codazzi}
		\left. \begin{aligned}
			\left(\nabla^\perp_X \alpha \right)(Y,Z) - \left(\nabla^\perp_Y \alpha \right)(X,Z) = \sum_{i=1}^\ell k_i \left[ \interno{\BR_i Y}{Z}\BS_i X - \interno{\BR_i X}{Z}\BS_i Y \right] \\
			(\n_Y A)(X,\xi) - (\n_X A)(Y, \xi) = \sum_{i=1}^\ell k_i \left[ \interno{\BS_i Y}{\xi}\BR_i X - \interno{\BS_i X}{\xi}\BR_i Y \right]
		\end{aligned} \right\}
	\end{equation}

\subsubsection{Ricci equation}
	\begin{align*}
		& \mathcal{R}^\perp(X,Y)\xi = \left(\bar{\mathcal R}(f_*X,f_*Y)\xi \right)^\perp + \al{X}{A_\xi Y} - \al{A_\xi X}{Y} = \\
		& \stackrel{\text{Lemma \ref{lem: curv1}}}{=} \left( \sum_{i=1}^\ell k_i \left[ \interno{\pi_if_*Y}{\xi}\pi_if_*X - \interno{\pi_if_*X}{\xi}\pi_if_*Y \right] \right)^\perp + \\
		& \quad + \al{X}{A_\xi Y} - \al{A_\xi X}{Y} = \\
		& = \sum_{i=1}^\ell k_i \left[ \interno{\BS_i Y}{\xi}\BS_i X - \interno{\BS_i X}{\xi}\BS_i Y \right] + \al{X}{A_\xi Y} - \al{A_\xi X}{Y}.
	\end{align*}

	Therefore,
	\begin{equation}\label{eq: Ricci}
		\left. \begin{aligned}
			\mathcal{R}^\perp(X,Y)\xi = \al{X}{A_\xi Y} - \al{A_\xi X}{Y} + \sum_{i=1}^\ell k_i \left( \BS_i X \wedge \BS_i Y \right) \xi \\
			\interno{\mathcal{R}^\perp(X,Y)\xi}{\z} = \interno{\left[A_\xi, A_\z \right]X}{Y} + \interno{\sum_{i=1}^\ell k_i \left( \BS_i X \wedge \BS_i Y \right) \xi}{\z}
		\end{aligned} \right\}
	\end{equation}
		\subsection{The immersion $F = \imath \circ f$}

Let $F := \imath \circ f \colon M \to \RN$ and, for each $i \in \{1, \cdots, \ell\}$, let $\nu_i = -k_i(\pi_i\circ F)$. Let also $\nbar$, $\ntil$ and $\nbarperp$ be the connexions on $\hO$, $\RN$ and $T_F^\perp M$, respectively.

\begin{lem}
	$T_F^\perp M = T_f^\perp M \op \spa\{\nu_1, \cdots, \nu_\ell\}$. Besides,
	\begin{align}
		& \nbarperp_X \nu_i = -k_i\imath_* \BS_i X, && A^F_{\nu_i} = k_i \BR_i, \label{eq: nui} \\ 
		& A_{\imath_*\xi}^F = A_\xi^f, && \nbarperp_X \imath_* \xi = \imath_*\nperp_X \xi + \sum_{i=1}^\ell \interno{\BS_i X}{\xi}\nu_i, \label{nbarperpxi}
	\end{align}
	for any $X \in \Gamma(TM)$ and every $\xi \in \Gamma\left( T^\perp M\right)$.
	 
\end{lem}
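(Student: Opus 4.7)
The plan is to establish the four assertions by working through the two-stage factorization $M \overset{f}{\hookrightarrow} \hO \overset{\imath}{\hookrightarrow} \RN$, using Lemma~\ref{lem: ai} for the second fundamental form of $\imath$ and identities~\eqref{eq: K L} to bridge the tangent/normal decompositions at each stage. No single calculation is hard; the whole task is bookkeeping across two layers of splittings.

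First I would verify the decomposition of $T_F^\perp M$. Since $\nu_i=0$ whenever $k_i=0$, equation~\eqref{Tiperp} identifies $\spa\{\nu_1,\ldots,\nu_\ell\}$ with $T_{f(x)}^\perp\hO$. The ambient space splits orthogonally as $T_{F(x)}\RN = \imath_*\bigl(T_{f(x)}\hO\bigr) \op T_{f(x)}^\perp\hO$, and the first summand further decomposes as $F_*(T_xM) \op \imath_*\bigl(T_f^\perp M\bigr)$ because $f$ is an isometric immersion into $\hO$. Subtracting $F_*(T_xM)$ from $T_{F(x)}\RN$ yields the claimed decomposition of $T_F^\perp M$.

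For \eqref{eq: nui}, I would differentiate $\nu_i=-k_i\,\pi_i\circ F$ in $\RN$ to obtain $\ntil_X\nu_i=-k_i\,\pi_iF_*X$. Using that $\pi_i$ commutes with $\imath_*$ together with \eqref{eq: K L}, one has $\pi_iF_*X = \imath_*(f_*\BR_iX+\BS_iX) = F_*\BR_iX+\imath_*\BS_iX$. The first term lies in $F_*(TM)$ and the second in $\imath_*\bigl(T_f^\perp M\bigr)\subset T_F^\perp M$, so matching the result against the Gauss--Weingarten decomposition $\ntil_X\nu_i=-F_*A^F_{\nu_i}X+\nbarperp_X\nu_i$ reads off simultaneously $A^F_{\nu_i}=k_i\BR_i$ and $\nbarperp_X\nu_i=-k_i\,\imath_*\BS_iX$.

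For \eqref{nbarperpxi}, I would apply the Gauss formula for $\imath$ to $\imath_*\xi$:
\[\ntil_X\imath_*\xi \;=\; \imath_*\nbar_X\xi + \ai{f_*X}{\xi}.\]
The Gauss--Weingarten identities for $f$ inside $\hO$ give $\nbar_X\xi = -f_*A^f_\xi X + \nperp_X\xi$, hence $\imath_*\nbar_X\xi = -F_*A^f_\xi X + \imath_*\nperp_X\xi$. Lemma~\ref{lem: ai} yields $\ai{f_*X}{\xi} = \sum_i\interno{\pi_if_*X}{\xi}\nu_i$; expanding $\pi_if_*X$ via \eqref{eq: K L} and discarding the tangential term $f_*\BR_iX$ (which is orthogonal to $\xi\in T_f^\perp M$) leaves $\ai{f_*X}{\xi}=\sum_i\interno{\BS_iX}{\xi}\nu_i$. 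Separating the $F$-tangent and $F$-normal components then delivers $A^F_{\imath_*\xi}=A^f_\xi$ together with the desired formula for $\nbarperp_X\imath_*\xi$. The only subtlety throughout is keeping the two layers of splittings straight, and \eqref{eq: K L} is exactly what translates between them.
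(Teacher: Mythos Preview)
Your proposal is correct and follows essentially the same route as the paper: both establish the normal splitting via \eqref{Tiperp}, then differentiate $\nu_i$ directly and apply the Gauss formula for $\imath$ together with Lemma~\ref{lem: ai}, using \eqref{eq: K L} to separate tangential and normal parts at each step. The only cosmetic difference is that you invoke \eqref{eq: K L} explicitly where the paper writes out $\pi_i f_*X = f_*\BR_i X + \BS_i X$ in line.
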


\begin{proof}
	By Lemma \ref{lem: pi_j}, the vector field $\nu_i := -k_i(\pi_i \circ F)$ is normal to $F$. Hence
	\begin{align*}
		& T_{F(p)}^\perp M = T_{f(p)}^\perp M \op T_{\imath(F(p))}^\perp \hO \stackrel{\eqref{Tiperp}}{=} \\
		& = T_{f(p)}^\perp M \op \spa\left\{-k_1(\pi_1\circ F)(p), \cdots, -k_\ell(\pi_\ell\circ F)(p)\right\} = \\
		& = T_{f(p)}^\perp M \op \spa\left\{\nu_1(p), \cdots, \nu_\ell(p)\right\}
	\end{align*}

	Let $X \in \Gamma(TM)$, thus
	\[\ntil_X\nu_i = -k_i \pi_i F_* X = -k_i\imath_*\pi_i f_* X = -k_i\imath_* \left[ f_*\BR_i X + \BS_i X\right].\]
	Therefore, $\nbarperp_X \nu_i = -k_i\imath_* \BS_i X$ and $A^F_{\nu_i} = k_i \BR_i$.	

	Last, if $\xi \in \Gamma\left(T^\perp M \right)$, then
	\begin{align*}
		&\ntil_X \imath_*\xi (p) = \imath_* \nbar_X \xi(p) + \ai{f_*X}{\xi}(F(p)) = \\
		& \stackrel{\eqref{eq: ai}}{=} \imath_*\left(-f_*A^f_\xi X + \nperp_X \xi(p) \right) - \sum_{i=1}^\ell \interno{\pi_if_* X}{\xi}k_i \pi_i(F(p)) = \\
		& = -F_*A^f_\xi X + \imath_* \nperp_X \xi(p) + \sum_{i=1}^\ell \interno{\BS_i X}{\xi}\nu_i(p).
	\end{align*}
		Therefore $A_{\imath_*\xi}^F = A_\xi^f$ and $\nbarperp_X \imath_* \xi = \imath_*\nperp_X \xi + \sum\limits_{i=1}^\ell \interno{\BS_i X}{\xi}\nu_i$.
\end{proof}

\begin{lem}\label{lem: aF}
	$\aF{X}{Y} = \imath_* \af{X}{Y} + \sum\limits_{i=1}^\ell \interno{\BR_i X}{Y} \nu_i$, for all $X,Y \in TM$.
\end{lem}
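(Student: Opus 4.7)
The plan is to apply the standard composition-of-immersions formula to $F = \imath \circ f$ and then rewrite the resulting $\alpha_\imath$ term using the tensors $\BR_i$ and $\BS_i$.

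First I would compute $\ntil_X F_* Y$ two ways. On one hand, using that $F_* = \imath_* \circ f_*$ and applying the Gauss formula for $\imath$,
\[
\ntil_X F_* Y = \ntil_X \imath_* (f_* Y) = \imath_* \nbar_X f_* Y + \ai{f_* X}{f_* Y},
\]
and then the Gauss formula for $f$ gives $\nbar_X f_* Y = f_* \n_X Y + \af{X}{Y}$. On the other hand, directly,
\[
\ntil_X F_* Y = F_* \n_X Y + \aF{X}{Y}.
\]
Comparing tangential and normal components with respect to $F$ (noting that $\imath_* \af{X}{Y}$ is normal to $F$ because $\af{X}{Y}$ is normal to $f$, and $\ai{f_*X}{f_*Y}$ is normal to $\imath$ hence to $F$) I obtain
\[
\aF{X}{Y} = \imath_* \af{X}{Y} + \ai{f_* X}{f_* Y}.
\]

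Next I apply Lemma \ref{lem: ai} to the last term:
\[
\ai{f_*X}{f_*Y} = -\sum_{i=1}^\ell \interno{\pi_i f_* X}{f_* Y}\, k_i\, (\pi_i \circ \imath).
\]
By \eqref{eq: K L}, $\pi_i f_* X = f_* \BR_i X + \BS_i X$, and since $\BS_i X \in T_f^\perp M$ while $f_* Y \in f_* TM$, the inner product reduces to $\interno{f_* \BR_i X}{f_* Y} = \interno{\BR_i X}{Y}$. Evaluating at $p \in M$, note that $(\pi_i \circ \imath)(f(p)) = \pi_i(F(p))$, so $-k_i(\pi_i \circ \imath)\circ f = \nu_i$. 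Therefore
\[
\ai{f_*X}{f_*Y} = \sum_{i=1}^\ell \interno{\BR_i X}{Y}\, \nu_i,
\]
and plugging this into the displayed formula for $\aF{X}{Y}$ yields the claim.

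There is no real obstacle here: the content is the composition rule for second fundamental forms combined with a direct substitution using the identities already established in Lemma \ref{lem: ai} and equation \eqref{eq: K L}. The only point requiring a small sanity check is the identification of $-k_i(\pi_i \circ \imath)$ along $f$ with the field $\nu_i$ defined on $M$, which is immediate from the definition $\nu_i = -k_i(\pi_i \circ F)$.
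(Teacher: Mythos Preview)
Your proof is correct and follows essentially the same route as the paper: the paper also writes $\aF{X}{Y} = \imath_*\af{X}{Y} + \ai{f_*X}{f_*Y}$, invokes Lemma~\ref{lem: ai}, and then identifies $\interno{\pi_i f_* X}{f_* Y} = \interno{\BR_i X}{Y}$ together with $-k_i(\pi_i\circ F) = \nu_i$. Your version is simply more explicit in deriving the composition formula and in justifying the inner-product reduction via \eqref{eq: K L}.
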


\begin{proof}
	If $X, Y \in TM$, then
	\begin{align*}
		& \aF{X}{Y} = \imath_*\af{X}{Y} + \ai{f_*X}{f_*Y} = \\
		& \stackrel{\text{Lemma \ref{lem: ai}}}{=} \imath_* \af{X}{Y} - \sum_{i=1}^\ell \interno{\pi_i f_* X}{f_* Y} k_i(\pi_i \circ F) = \\
		& = \imath_* \af{X}{Y} + \sum_{i=1}^\ell \interno{\BR_i X}{Y} \nu_i. \qedhere
	\end{align*}
\end{proof}


	\section{Examples of isometric immersions in $\XO$}
		\subsection{Products of isometric immersions}\label{sec: R S T nulos}

Let $i \in \{1, \cdots, \ell\}$ and $z \in \prod\limits_{\substack{j=1\\ j \ne i}}^\ell \o{j}$ be a fixed point, $\bar f \colon M \to \o{i}$ be an isometric immersion and $\imath_i^z \colon \o{i} \to \hO$ be the totally geodesic embedding given by $\imath_i^z(x) := (z,x)$. Simple examples of isometric immersions in $\hO$ can be made with compositions like $\imath_i^z \circ \bar f$:
\[\begin{matrix}
	M & \stackrel{\bar f}{\longrightarrow} & \o{i} & \stackrel{\imath_i^z}{\longrightarrow} & \left( \prod\limits_{\substack{j=1\\ j \ne i}}^\ell \o{j}\right) \x \o{i} \\
	x & \longmapsto & \bar f(x) & \longmapsto & \left(z, \bar f(x) \right)
\end{matrix}\]

\begin{lem}\label{lem: R=0}
	Let $f \colon M^m \to \hO$ be an isometric immersion. Then
	\begin{enum}
		\item $f(M) \subset \{z\} \x \o{i}$ for some $z \in \prod\limits_{\substack{j=1\\ j \ne i}}^\ell \o{j}$ if, and only if, $\BR_i = \id$.
		\item $f(M) \subset \left( \prod\limits_{\substack{j=1\\ j \ne i}}^\ell \o{j} \right)\x \{z\}$, for some $z\in \o{i}$ if, and only if, $\BR_i = 0$.
	\end{enum}
\end{lem}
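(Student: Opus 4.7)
The plan is to reduce each biconditional to the statement \emph{the component $f_j$ is constant}, where $f = (f_1,\ldots,f_\ell)$ with $f_j \colon M \to \o{j}$. The bridge between this geometric condition and the tensors $\BR_i$ is the identity $\pi_i f_* X = f_*\BR_i X + \BS_i X$ from \eqref{eq: K L}, together with \eqref{somas}, \eqref{eq: RST}, and the eigenvalue bound of Remark \ref{observa��o: autovalor R}. Throughout I assume $M$ is connected, as usual in this setting.

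For the forward direction of (I), I would argue as follows. If $f(M) \subset \{z\}\x\o{i}$ with $z=(z_j)_{j\ne i}$, then $f_j \equiv z_j$ for every $j\ne i$, so $d f_j = 0$. Since composing with the canonical inclusion gives $\imath_* \pi_j f_* X = (0,\ldots,df_j(X),\ldots,0)$, this means $\pi_j \circ f_* = 0$. Reading \eqref{eq: K L} and splitting into tangent and normal parts yields $\BR_j = 0$ and $\BS_j = 0$ for every $j\ne i$. Then \eqref{somas} immediately gives $\BR_i = \id - \sum_{j\ne i}\BR_j = \id$.

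For the converse of (I), assume $\BR_i = \id$. By Remark \ref{observa��o: autovalor R} each $\BR_j$ is self-adjoint with spectrum contained in $[0,1]$, hence positive semi-definite. The relation $\sum_{j=1}^\ell \BR_j = \id$ from \eqref{somas} combined with $\BR_i = \id$ forces $\sum_{j\ne i}\BR_j = 0$, and a sum of positive semi-definite operators can vanish only if each summand is zero; therefore $\BR_j = 0$ for every $j\ne i$. Feeding this into the first identity of \eqref{eq: RST} gives $\BS_j^\t \BS_j = \BR_j(\id-\BR_j) = 0$, so $\BS_j = 0$ as well. Going back to \eqref{eq: K L}, $\pi_j f_* X = f_*\BR_j X + \BS_j X = 0$ for all $X$ and all $j\ne i$, which is exactly $d f_j = 0$. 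Connectedness of $M$ then makes each $f_j$ a constant $z_j \in \o{j}$, and $f(M) \subset \{z\}\x\o{i}$ with $z = (z_j)_{j\ne i}$.

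Statement (II) is proved by the same mechanism applied only to the index $i$: if the $i$-th component $f_i$ is constant then $\pi_i \circ f_* = 0$, hence $\BR_i = 0$ and $\BS_i = 0$; conversely, if $\BR_i = 0$ then by \eqref{eq: RST} we have $\BS_i^\t\BS_i = \BR_i(\id - \BR_i) = 0$, so $\BS_i = 0$, and \eqref{eq: K L} gives $\pi_i\circ f_* = 0$, i.e.\ $df_i = 0$, so $f_i$ is constant on the connected manifold $M$. The one substantive point where care is required is the passage $\sum_{j\ne i}\BR_j = 0 \Rightarrow \BR_j = 0$ in the converse of (I); everything else is an immediate rearrangement of identities already established in the excerpt.
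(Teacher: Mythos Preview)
Your proof is correct, but the paper takes a shorter, more direct route. Instead of passing through the complementary indices $j\ne i$ via \eqref{somas} and the positive-semidefiniteness of the $\BR_j$'s, the paper works entirely at the index $i$ using the defining relation $\BR_i = \BL_i^\t\BL_i$: one has $\interno{\BR_i X}{Y} = \interno{\pi_i f_* X}{\pi_i f_* Y}$, so $\BR_i = \id$ is equivalent to $\|\pi_i f_* X\|^2 = \|f_* X\|^2$ for all $X$, which (since $T\hO$ is Riemannian and the $\pi_j$ project onto mutually orthogonal summands) forces $\pi_i f_* = f_*$; likewise $\BR_i = 0$ is directly equivalent to $\pi_i f_* = 0$. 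Your detour through $\sum_{j\ne i}\BR_j = 0$ and Remark~\ref{observa��o: autovalor R} is a legitimate alternative and has the mild aesthetic benefit of unifying (I) and (II) as ``$\BR_j = 0$ on the appropriate set of indices'', at the cost of invoking one extra ingredient (the eigenvalue bound) that the paper's argument does not need. Both arguments implicitly rely on connectedness of $M$ to pass from $d f_j = 0$ to $f_j$ constant; you make this assumption explicit, which is cleaner.
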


\begin{proof}

	\noindent We know that $f(M) \subset \{z\}\x\o{i}$ if, and only if, $\pi_if_* X = f_* X$, for any $X \in TM$. But $\pi_i f_* = \BL_i$ and $\BR_i = \BL_i^\t \BL_i$, thus
	\begin{multline*}
		\pi_i f_* X = f_* X, \ \forall X \in TM \sss \\
		\sss \interno{\BR_i X}{Y} = \interno{\BL_i X}{\BL_i Y} = \interno{X}{Y}, \ \forall X,Y \in TM \sss \BR_i = \id.
\end{multline*}
	
	On the other side, $f(M) \subset \left( \prod\limits_{\substack{j=1\\ j \ne i}}^\ell \o{j}\right) \x \{z\}$ if, and only if, $\pi_i f_* X =0$, for all $X \in TM$. But
	\begin{multline*}
		\pi_i f_* X = 0, \ \forall X \in TM \sss \\
		\sss \interno{\BR_i X}{Y} = \interno{\BL_i X}{\BL_i Y} = 0, \ \forall X, Y \in TM \sss \BR_i = 0. \qedhere
	\end{multline*}
\end{proof}

Let $I = \{i_1, \cdots, i_l\} \subsetneq \{1, \cdots, \ell\}$, with $i_1 < i_2 < \cdots < i_l$. Now, lets consider the totally geodesic embedding
\[\jmath \colon \prod_{\substack{i=1\\i \notin I}}^\ell \o{i} \to \left( \prod\limits_{\substack{i=1\\i \notin I}}^\ell \o{i} \right) \x \left( \prod_{i \in I} \o{i} \right) = \hO,\]
given by $\jmath(x) = (x,z)$, where each $z \in \prod\limits_{j=1}^l \o{i_j}$ is a fixed point.

As a consequence of Lemma \ref{lem: R=0}, we have the following Corollary.

\begin{cor}
	Let $I = \{i_1, \cdots, i_l\} \subsetneq \{1, \cdots, \ell\}$, with $i_1 < i_2 < \cdots < i_l$, and let $f \colon M \to \hO$ be an isometric immersion. Then the following sentences are equivalent:
	\begin{enum}
		\item $f = \jmath \circ \bar f$, where $\bar f \colon M \to \prod\limits_{\substack{i=1\\i \notin I}}^\ell \o{i}$
		is an isometric immersion and $\jmath$ is the totally geodesic embedding given above.
		
		\item $\BR_{i_1}= \cdots = \BR_{i_l} = 0$.
	\end{enum}
\end{cor}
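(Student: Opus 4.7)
The plan is to derive the corollary by iterating part (II) of Lemma \ref{lem: R=0}. That lemma already characterizes the vanishing $\BR_i = 0$ as equivalent to the factor $f_i$ being constant; the corollary then amounts to the observation that several such single-factor constancy conditions combine into a joint constancy on all factors indexed by $I$. So the task is really one of bookkeeping rather than of genuine geometry.

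For (I) $\Rightarrow$ (II), I would use the fact that if $f = \jmath \circ \bar f$ with $\jmath(y) = (y, z)$ and $z = (z_{i_1}, \ldots, z_{i_l}) \in \prod_{j=1}^{l} \o{i_j}$ fixed, then the $i_j$-th component of $f$ is identically $z_{i_j}$ for each $j \in \{1, \ldots, l\}$. Thus $\pi_{i_j} \circ f$ is constant on $M$, so $\pi_{i_j} f_* X = \BL_{i_j} X = 0$ for every $X \in TM$, and the definition $\BR_{i_j} = \BL_{i_j}^{\t}\BL_{i_j}$ immediately yields $\BR_{i_j} = 0$ for each $j$. This is literally the easy direction of Lemma \ref{lem: R=0}(II) applied to each index in $I$.

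For the converse (II) $\Rightarrow$ (I), I would invoke Lemma \ref{lem: R=0}(II) once for each $i_j \in I$ to produce, for every $j$, a point $z_{i_j} \in \o{i_j}$ such that $f(M)$ is contained in the subset where the $i_j$-th factor equals $z_{i_j}$. Intersecting these $l$ containments pins down the entire $I$-block of $f(x)$ to be the constant tuple $z := (z_{i_1}, \ldots, z_{i_l})$. Writing $f(x) = (f_1(x), \ldots, f_\ell(x))$, I would then define
\[\bar f \colon M \to \prod_{\substack{i=1\\ i \notin I}}^{\ell} \o{i}, \qquad \bar f(x) := \bigl(f_i(x)\bigr)_{i \notin I},\]
which is smooth since each $f_i$ is. By construction $f = \jmath \circ \bar f$ for the totally geodesic embedding $\jmath$ determined by the fixed point $z$, and because $\jmath$ is an isometric embedding, $\bar f$ inherits the isometry of $f$.

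The main (and only) obstacle is keeping the indexing consistent: one must verify that the collection of constant components produced by the $l$ separate applications of Lemma \ref{lem: R=0}(II) really does glue into a single point $z \in \prod_{j} \o{i_j}$, and that the residual map $\bar f$ into the complementary product is smooth and isometric. Both checks are immediate from the product structure of $\hO$ and the fact that $\jmath$ is a totally geodesic isometric embedding, so no additional computation beyond that already carried out for Lemma \ref{lem: R=0} is required.
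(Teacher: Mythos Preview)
Your proposal is correct and matches the paper's approach: the paper states the corollary immediately after Lemma \ref{lem: R=0} with the remark ``As a consequence of Lemma \ref{lem: R=0}'' and gives no further proof, so your argument---applying part \textsl{(II)} of that lemma once for each index in $I$ and then reassembling the constant components into a single point $z$---is exactly the intended derivation, spelled out in full.
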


We can build other examples of isometric immersions $f \colon M_1\x M_2 \to \hO$, by $f(x,y) :=  (f_1(x), f_2(y))$, where
\[\begin{matrix}
	f_1 : & M_1 & \to & \prod\limits_{\substack{j=1\\ j \ne i}}^\ell \o{j}\\
	& x & \mapsto & f_1(x)
\end{matrix}\quad \text{and} \quad
\begin{matrix}
	f_2 : & M_2 & \to & \o{i} \\
	& y & \mapsto & f_2(y)
\end{matrix}\]
are isometric immersions. In order to study these examples, we need some new results.

\begin{lem}\label{lem: ker S}
	$\ker \BS_i = \ker \BR_i \op \ker (\id - \BR_i)$ and $\BR_i(\ker \BS_i) = \ker(\id - \BR_i)$.
\end{lem}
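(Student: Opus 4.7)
My plan is to reduce everything to the algebraic identity $\BS_i^\t\BS_i = \BR_i(\id - \BR_i)$ from \eqref{eq: RST} together with the fact (observed in Remark \ref{observa��o: autovalor R}) that $\BR_i$ is self-adjoint with spectrum contained in $[0,1]$.

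First, I would establish the preliminary equality $\ker \BS_i = \ker \BR_i(\id - \BR_i)$. The inclusion $\ker \BS_i \subset \ker \BS_i^\t \BS_i = \ker \BR_i(\id - \BR_i)$ is trivial; the reverse follows from the standard computation $\|\BS_i X\|^2 = \interno{\BS_i^\t \BS_i X}{X} = \interno{\BR_i(\id - \BR_i)X}{X}$, which shows that $\BR_i(\id - \BR_i)X = 0$ forces $\BS_i X = 0$.

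Next, since $\BR_i$ is self-adjoint it is diagonalizable with an orthogonal eigenbasis, and its eigenvalues lie in $[0,1]$. The operators $\BR_i$ and $\id - \BR_i$ then commute and are simultaneously diagonalized. On an eigenvector with eigenvalue $\lambda$, one has $\BR_i(\id - \BR_i)X = \lambda(1-\lambda)X$, which vanishes exactly when $\lambda \in \{0,1\}$. Therefore $\ker \BR_i(\id - \BR_i)$ equals the direct (in fact orthogonal) sum of the $0$- and $1$-eigenspaces of $\BR_i$, i.e.\ $\ker \BR_i \oplus \ker(\id - \BR_i)$, which proves the first statement.

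For the second statement, take $X \in \ker\BS_i$ and decompose $X = X_0 + X_1$ with $X_0 \in \ker \BR_i$ and $X_1 \in \ker(\id - \BR_i)$; then $\BR_i X = X_1 \in \ker(\id - \BR_i)$, so $\BR_i(\ker\BS_i) \subset \ker(\id - \BR_i)$. Conversely, every $Y \in \ker(\id - \BR_i)$ satisfies $\BS_i^\t \BS_i Y = \BR_i(Y) - \BR_i^2 Y = Y - Y = 0$, hence $Y \in \ker \BS_i$, and since $\BR_i Y = Y$ we get $Y \in \BR_i(\ker \BS_i)$. There should be no real obstacle here: once the identity \eqref{eq: RST} and the spectral bound from Remark \ref{observa��o: autovalor R} are in hand, the argument is purely linear-algebraic, and the only minor point to be careful about is that the decomposition $\ker \BR_i \oplus \ker(\id - \BR_i)$ is genuine (which is automatic because $0 \ne 1$ are distinct eigenvalues of a self-adjoint operator, so the eigenspaces meet trivially).
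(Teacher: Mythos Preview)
Your proof is correct. Both your argument and the paper's rest on the identity $\BS_i^\t\BS_i = \BR_i(\id-\BR_i)$ and the self-adjointness of $\BR_i$, but you finish differently: you diagonalize $\BR_i$ globally and read off that $\ker\BR_i(\id-\BR_i)$ is exactly the sum of the $0$- and $1$-eigenspaces, whereas the paper instead observes that $\BR_i$ restricted to $\ker\BS_i$ is a self-adjoint idempotent (hence an orthogonal projection) and uses the projection decomposition $\ker\BS_i = \ker(\BR_i|_{\ker\BS_i}) \oplus \BR_i(\ker\BS_i)$, then identifies the image with $\ker(\id-\BR_i)$. Your route is perhaps more transparent; the paper's avoids invoking a full eigenbasis. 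One small remark: the fact that the eigenvalues of $\BR_i$ lie in $[0,1]$ (Remark~\ref{observa��o: autovalor R}) is not actually needed anywhere in your argument --- all you use is that $\BR_i$ is diagonalizable and that $\lambda(1-\lambda)=0$ forces $\lambda\in\{0,1\}$.
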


\begin{proof}
	Since $\ker \BS_i = \ker \BS_i^\t\BS_i$ and $\BS_i^\t\BS_i \stackrel{\eqref{eq: RST}}{=} \BR_i(\id - \BR_i)$, then
	\[\ker \BS_i = \set{ X \in T_xM}{\BR_i X = \BR_i^2 X}.\]
	Hence $\ker \BR_i \subset \ker \BS_i$ and $\ker(\id - \BR_i) \subset \ker \BS_i$.

	\begin{afi}{$\BR_i(\ker \BS_i) \subset \ker \BS_i$.}
		If $X \in \ker \BS_i$, then $\BR_i X = \BR_i^2 X$. So $\BR_i(\BR_i X) = \BR_i \left( \BR_i^2 X\right) = \BR_i^2 (\BR_i X)$, that is, $\BR_i X \in \ker \BS_i$.
	\end{afi}

	So $\BR_i|_{\ker \BS_i} = \BR_i^2|_{\ker \BS_i}$ and we know that $\BR_i$ is self-adjoint, then $\BR_i|_{\ker \BS_i}$ is an orthogonal projection. Therefore $\ker\BS_i = \ker \BR_i|_{\ker \BS_i} \op \BR_i(\ker \BS_i)$. Now we have to show that $\BR_i(\ker \BS_i) = \ker (\id - \BR_i)$.

	Indeed, if $Y \in \BR_i(\ker \BS_i)$, then $Y = \BR_i X$, for some $X \in \ker \BS_i$. Hence $Y = \BR_i X = \BR_i^2 X = \BR_i Y$, that is, $Y \in \ker (\id - \BR_i)$.
	
	On the other side, if $Y \in \ker(\id - \BR_i)$, then $Y = \BR_i Y$. Thus $\BR_i Y = \BR_i^2 Y$. Therefore $Y \in \ker \BS_i$ and $Y = \BR_i Y$, so $Y \in \BR_i( \ker \BS_i)$.
\end{proof}

\begin{lem}\label{lem: S=0}
	Let $f\colon M^m \to \hO$ be an isometric immersion with $M$ connected. If $\BS_i = 0$, then $\ker \BR_i$ and $\ker (\id - \BR_i)$ have constant dimension on $M$.
\end{lem}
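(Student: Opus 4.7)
The plan is to observe that the hypothesis $\BS_i = 0$ makes $\BR_i$ a parallel tensor, from which constancy of the dimensions follows by a standard parallel transport argument.

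First I would use equation \eqref{eq: derivada R}, which reads $(\n_X \BR_i)Y = A_{\BS_i Y}X + \BS_i^\t \al{X}{Y}$. Under the assumption $\BS_i = 0$ we also have $\BS_i^\t = 0$, so both terms on the right-hand side vanish and $\n \BR_i \equiv 0$, i.e., $\BR_i$ is a parallel endomorphism of $TM$. By Lemma \ref{lem: ker S}, we also have $T_xM = \ker \BS_i(x) = \ker \BR_i(x) \oplus \ker(\id - \BR_i)(x)$ at every $x \in M$, so $\BR_i$ is a (self-adjoint) pointwise projection and the two dimensions in question sum to $m$.

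Next I would exploit parallelism to transport these kernels. Given any two points $p,q \in M$ (using connectedness of $M$) and any piecewise differentiable curve $\g$ joining them, let $P_\g \colon T_p M \to T_q M$ denote parallel transport. Since $\BR_i$ is parallel, $P_\g$ commutes with $\BR_i$, and hence $P_\g\bigl(\ker \BR_i(p)\bigr) \subset \ker \BR_i(q)$ and $P_\g\bigl(\ker(\id - \BR_i)(p)\bigr) \subset \ker(\id - \BR_i)(q)$. Because $P_\g$ is a linear isomorphism, the inverse inclusions follow from the same argument applied to $\g^{-1}$, so both subspaces have the same dimension at $p$ and at $q$.

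There is no real obstacle here; the only thing to keep in mind is that $\BS_i = 0$ also gives $\BS_i^\t = 0$, which is what forces $\BR_i$ to be parallel (otherwise the term $A_{\BS_i Y}X$ alone would not suffice to conclude anything). The conclusion that $\dim \ker \BR_i$ and $\dim \ker(\id - \BR_i)$ are constant on the connected manifold $M$ then follows immediately.
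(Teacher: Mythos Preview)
Your argument is correct, and it takes a genuinely different route from the paper's own proof.

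The paper does not invoke the parallelism of $\BR_i$ at this stage. Instead, it uses only the pointwise consequence of Lemma~\ref{lem: ker S} (namely $T_xM = \ker\BR_i(x) \oplus \ker(\id-\BR_i)(x)$, so that the two dimensions sum to $m$), and then runs a topological argument: for each $j$, the set $A_j := \{x \in M : \dim\ker\BR_i(x) = j\}$ is shown to be open by exhibiting locally independent sections of $\BR_i(TM)$ and of $(\id-\BR_i)(TM)$; connectedness of $M$ then forces all but one $A_j$ to be empty. This is essentially a lower-semicontinuity-of-rank argument and avoids any mention of covariant derivatives.

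Your approach is shorter and more structural: from \eqref{eq: derivada R} and $\BS_i=0$ you get $\nabla\BR_i = 0$, and parallel transport then carries $\ker\BR_i$ and $\ker(\id-\BR_i)$ isomorphically between fibers. In fact, this parallelism is exactly what the paper establishes and uses a few lines later (in the proof of Proposition~\ref{prop: S=0}, Claim~1), so your proof anticipates that step and makes Lemma~\ref{lem: S=0} an immediate byproduct. The paper's proof, on the other hand, is more self-contained in that it needs nothing beyond continuity of $\BR_i$ and the algebraic decomposition; it would work for any continuous field of projections, parallel or not.
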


\begin{proof}
	If $\BS_i = 0$ then, by Lemma \ref{lem: ker S}, $TM = \ker \BS_i = \ker \BR_i \op \ker (\id - \BR_i) = \ker \BR_i \op \BR_i(TM)$.
	
	Now, for each $j \in \{0, \cdots, m\}$, let $A_j := \set{x \in M}{\dim \left(\ker \left.\BR_i\right|_{T_x M} \right) = j}$.

	\begin{afi}{Each $A_j$ is an open set.}
		If $A_j = \varnothing$, then $A_j$ is open. So lets suppose that $\dim \left( \ker \left.\BR_i\right|_{T_p M} \right) = j$, for some $p \in M$.

		If $j = 0$, then there are vector fields $X_1, \cdots, X_m$, defined in a neighborhood $U$ of $p$, such that $\BR_i X_1, \cdots, \BR_i X_m$ are LI in $U$. Hence $\BR_i\left(T_x M \right) = T_x M$, for every $x \in U$, that is, $\dim \left(\ker \left.\BR_i\right|_{T_x M} \right) = 0$, for every $x \in U$.

		If $j = m$, then $\dim \left[ \ker\left.(\id - \BR_i)\right|_{T_p M} \right] = 0$. Hence there are $X_1, \cdots, X_m$, defined in a neighborhood $U$ of $p$, such that $(\id - \BR_i)X_1, \cdots, (\id - \BR_i)X_m$ are LI in $U$. Thus $\dim \left[ \ker \left.(\id - \BR_i)\right|_{T_x M} \right] = 0$, for every $x \in U$, that is, $m = \dim \left( \ker\left.\BR_i\right|_{T_x M} \right)$, for all $x \in U$.

		Lets suppose now that $0 < j < m$. In this case there are vector fields $X_1, \cdots, X_m$, defined in a neighborhood $U$ of $p$, such that $(\id - \BR_i)X_1$, $\cdots$, $(\id - \BR_i)X_j$ are LI in $U$ and $\BR_i X_{j+1}$, $\cdots$, $\BR_i X_m$ are also LI in $U$. Thus, $\dim \left(\ker \BR_i|_{T_x M} \right) \geq j$ and $\dim \left[ \ker (\id - \BR_i)|_{T_x M} \right] \geq m-j$, for any $x \in U$.
		
		But $m = \dim (\ker\BR_i) + \dim [\ker (\id - \BR_i)]$, then $\dim \left( \left.\ker \BR_i\right|_{T_x M} \right) = j$, for all $x \in U$. Therefore $A_j$ is open, for any $j \in \{0, \cdots, m\}$.
	\end{afi}

	We know that $M = \bigcup\limits_{j=0}^m A_j$ and $A_j \cap A_o = \varnothing$, if $j \ne o$. Since $M$ is connected, $M = A_j$, for some $j \in \{0, \cdots, m\}$, that is, $\dim (\ker \BR_i)$ is constant in $M$.
\end{proof}

\begin{prop}\label{prop: S=0}
	Let $f \colon M^m \to \hO$ be an isometric immersion with, $M$ connected. Thus the following claims are equivalent:
	\begin{enum}
		\item $M$ is locally (isometric to) a product manifold $M_1^{m_1}\x M_2^{m_2}$, with $0 < m_1 < m$, and $f$ is locally a product immersion
		\[\begin{matrix}
			f : & M_1 \x M_2 & \longrightarrow & \left(\prod\limits_{\substack{j=1 \\ j \ne i}}^\ell \o{j} \right) \x \o{i} \\
			& (x,y) & \longmapsto & \big(f_1(x), f_2(y)\big).
		\end{matrix}\]
		
		\item $\BS_i = 0$ and $\dim (\ker \BR_i) = m_1$, with $0 < m_1 < m$.
	\end{enum}

	Besides that, if $M$ is complete and simply connected and the second claim is true, then $M$ is globally isometric to a product manifold $M_1^{m_1} \x M_2^{m_2}$ and $f$ is globally a product immersion like in \textsl{(I)}.
\end{prop}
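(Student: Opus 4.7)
The plan is to use the parallelism of $\BR_i$ that follows from $\BS_i = 0$, combined with a de Rham decomposition argument. Direction (I) $\Rightarrow$ (II) is an immediate verification: for $X = (X_1, 0) \in T(M_1 \x M_2)$ one has $\pi_i f_* X = 0$, whence $\BR_i X = 0$ and $\BS_i X = 0$; for $X = (0, X_2)$ one has $\pi_i f_* X = f_* X$, so $\BR_i X = X$ and again $\BS_i X = 0$. Hence $\BS_i \equiv 0$ and $\ker \BR_i = TM_1$ has constant dimension $m_1$.

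For (II) $\Rightarrow$ (I), assume $\BS_i = 0$. Equation \eqref{eq: derivada R} gives $\n \BR_i = 0$, so $\BR_i$ is parallel. By Lemma \ref{lem: ker S} there is an orthogonal decomposition $TM = D_1 \oplus D_2$ with $D_1 := \ker \BR_i$ and $D_2 := \ker(\id - \BR_i)$, and by Lemma \ref{lem: S=0} these have constant ranks $m_1$ and $m_2 := m - m_1$. Because $D_1$ and $D_2$ are eigenspaces of the parallel self-adjoint tensor $\BR_i$, they are $\n$-invariant (for $Y \in D_1$, $\BR_i \n_X Y = \n_X(\BR_i Y) - (\n_X \BR_i) Y = 0$, and similarly for $D_2$), hence integrable with totally geodesic leaves. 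The local de Rham theorem then supplies, about each point, an isometry from a neighborhood of $M$ onto a Riemannian product $M_1^{m_1} \x M_2^{m_2}$ whose tangent decomposition is exactly $D_1 \oplus D_2$.

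To see that $f$ splits according to this local product structure, I will show that each ``half'' of $f$ is constant along one of the two distributions. For $X \in D_1$, equation \eqref{eq: K L} gives $\pi_i f_* X = f_* \BR_i X + \BS_i X = 0$, so $\pi_i \circ f$ depends only on the $M_2$-coordinate. For $Y \in D_2$, the relation $(\id - \BR_i) Y = 0$ combined with $\sum_j \BR_j = \id$ from \eqref{somas} yields $\sum_{j \ne i} \BR_j Y = 0$; since each $\BR_j = \BL_j^\t \BL_j$ is positive semi-definite, each summand must vanish, so $\BR_j Y = 0$ for all $j \ne i$. Then \eqref{eq: RST} forces $\|\BS_j Y\|^2 = \interno{\BR_j(\id - \BR_j) Y}{Y} = 0$, hence $\BS_j Y = 0$ and $\pi_j f_* Y = 0$ for every $j \ne i$, so $(\id - \pi_i) \circ f$ depends only on the $M_1$-coordinate. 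In product coordinates $(x,y)$ this gives $f(x,y) = (f_1(x), f_2(y))$ with $f_1 \colon M_1 \to \prod_{j \ne i} \o{j}$ and $f_2 \colon M_2 \to \o{i}$, and the orthogonality $D_1 \perp D_2$ forces both to be isometric immersions.

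The main technical hurdle is extracting information about all indices $j \ne i$ from a hypothesis that seems to involve only the index $i$; this is done via the positive semi-definiteness of each $\BR_j$, which turns $\sum_{j \ne i} \BR_j Y = 0$ into the vanishing of every summand. For the final assertion, when $M$ is complete and simply connected, the global de Rham decomposition theorem upgrades the local isometry to a global one $M \cong M_1^{m_1} \x M_2^{m_2}$, and the component-by-component analysis above applies verbatim to conclude that $f = (f_1, f_2)$ on all of $M$.
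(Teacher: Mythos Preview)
Your argument is correct, and your route for (II) $\Rightarrow$ (I) is genuinely different from the paper's. Both proofs set up the parallel orthogonal splitting $TM = \ker\BR_i \oplus \ker(\id-\BR_i)$ and invoke de Rham to obtain the local Riemannian product $M_1\times M_2$. To split the immersion, however, the paper works in the ambient flat space $\RN$: it first shows that $\alpha_F(X,Y)=0$ for $X\in\ker\BR_i$, $Y\in\ker(\id-\BR_i)$ (via \eqref{eq: derivada S} and Lemma~\ref{lem: aF}), then applies Moore's Lemma to write $F(x,y)=(v_0,F_1(x),F_2(y))$, and finally identifies the subspaces $V_1,V_2$ so produced with $\prod_{j\ne i}\R^{N_j}$ and $\R^{N_i}$. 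You instead bypass Moore's Lemma entirely: from $(\id-\BR_i)Y=0$ and $\sum_j\BR_j=\id$ you get $\sum_{j\ne i}\BR_j Y=0$, and the positive semi-definiteness of each $\BR_j=\BL_j^\t\BL_j$ (the ambient product metric being Riemannian) forces $\BR_j Y=0$, hence $\BS_j Y=0$ and $\pi_j f_*Y=0$ for every $j\ne i$. This shows directly that each coordinate projection of $f$ is constant along one of the two foliations, so $f$ is a product map on $M_1\times M_2$.

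Your approach is shorter and more elementary: it avoids the extrinsic machinery (the second fundamental form of $F$ and Moore's Lemma) and exploits only the algebraic positivity of the $\BR_j$'s. The paper's approach, on the other hand, is the one that generalizes verbatim to the setting of Section~3.2, where $\pi_i$ is replaced by a sum $\Pi_1=\sum_{i\in I}\pi_i$ and one cannot immediately decompose the complementary projection as a sum of positive operators acting on a single index; there the Moore's Lemma argument carries through unchanged, while your positivity trick would need to be rephrased (though it still works, since $\id-\TR_1=\TR_2=\sum_{i\notin I}\BR_i$ is again a sum of positive operators).
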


\begin{proof}
	\textbf{\textsl{(I)} $\Rightarrow$ \textsl{(II)}:} Lets suppose that $M^m$ like in \textsl{(I)}. Hence, $T_{(x,y)} M = {\imath_1^y}_* T_x M_1 \op {\imath_2^x}_* T_y M_2$, where $\imath_1^y \colon M_1 \to M$ and $\imath_2^x \colon M_2 \to M$ are given by $\imath_1^y(z) = (z,y)$ and $\imath_2^x (z) = (x,z)$.

	If $X \in {\imath_1^y}_* T_x M_1$, then $X = {\imath_1^y}_* \tilde X = \left(\tilde X, 0 \right) \in T_{(x,y)} \left(\prod\limits_{\substack{j=1 \\ j \ne i}}^\ell \o{j} \right) \x \o{i}$ and
	$\pi_i f_* X = \pi_if_* \left( \tilde X, 0 \right) = \pi_i\left({f_1}_* \tilde X, {f_2}_* 0 \right) = 0$, thus $\BR_i X = 0$.
	
	Analogously, if $Y \in {\imath_2^x}_* T_y M_2$ then $(\id - \BR_i)Y = 0$. Thus
	\[T_{(x,y)} M = {\imath_1^y}_* T_x M_1 \op {\imath_2^x}_* T_y M_2 = \ker \BR_i|_{T_{(x,y)} M} \op \ker (\id - \BR_i)|_{T_{(x,y)} M}.\]
	Therefore $\dim (\ker \BR_i) = m_1$ and from Lemma \ref{lem: S=0} we conclude that $\BS_i = 0$. \vspace{1ex}

	\noindent \textbf{\textsl{(II)} $\Rightarrow$ \textsl{(I)}:} Lets suppose now that $\BS_i = 0$ and $\dim (\ker \BR_i) = m_1$, with $0 < m_1 < m$. Hence, by Lemma \ref{lem: ker S}, we know that $TM = \ker \BR_i \op \ker(\id - \BR_i)$, and it follows from Lemma \ref{lem: S=0} that $\ker \BR_i$ and $\ker (\id-\BR_i)$ are distributions on $M$.

	\begin{afi}{$\ker \BR_i$ and $\ker (\id - \BR_i)$ are parallel distributions.}	
		Let $Y \in \G(\ker \BR_i)$ and $X \in \G(TM)$. Thus $\BR_i \n_X Y \stackrel{\eqref{eq: derivada R}}{=} \n_X \BR_i Y = 0$. Therefore $\ker \BR_i$ and $(\ker \BR_i)^\perp = \ker (\id - \BR_i)$ are parallel distributions.
	\end{afi}

	Now, for each $x \in M$, let $L_1^{m_1}(x)$ and $L_2^{m_2}(x)$ be integral submanifolds of $\ker \BR_i$ and $(\ker \BR_i)^\perp$, respectively, at the point $x$. The De Rham's Theorem (see \cite{dR} and \cite{RS}) assure us that for each $x \in M$, there is a neighborhood $U$ of $x$ and there are open sets $M_1^{m_1}\subset L_1^{m_1}(x)$ and $M_2^{m_2} \subset L_2^{m_2}(x)$ and an isometry $\psi \colon M_1\x M_2 \to U$ such that $x \in U \cap M_1 \cap M_2$ and, for each $(x_1,x_2) \in M_1\x M_2$, $\psi \left( M_1 \x\{x_2\} \right)$ is a leaf of $\ker \BR_i$ and $\psi \left(\{x_1\}\x M_2 \right)$ is a leaf of $(\ker \BR_i)^\perp$.

	Thus we can identify  $U$ with $M_1^{m_1} \x M_2^{m_2}$, $\ker \BR_i$ with $T M_1$ and $(\ker \BR_i)^\perp$ with $T M_2$ and we can consider the applications $f \colon M_1^{m_1} \x M_2^{m_2} \to \hO$ and $F = \imath \circ f \colon M_1^{m_1}\x M_2^{m_2} \to \RN$, where $\imath \colon \hO \hookrightarrow \RN$ is the canonical inclusion.

	\begin{afi}{If $X \in \ker \BR_i$ and $Y \in (\ker \BR_i)^\perp$, then $\aF{X}{Y} = 0$.}
		If $X \in \ker \BR_i$ and $Y \in \BR_i(TM)$, then $Y = \BR_i Z$, for some $Z \in TM$. Hence $\af{X}{Y} = \af{X}{\BR_i Z} \stackrel{\eqref{eq: derivada S}}{=} \BT_i \af{X}{Z} \stackrel{\eqref{eq: derivada S}}{=} \af{\BR_i X}{Z} = 0$. Thus
		\begin{multline*}
			\aF{X}{Y} = \imath_* \af{X}{Y} + \sum_{j=1}^\ell \interno{\BR_j X}{Y}\nu_j = \sum_{\substack{j=1\\ j\ne i}}^\ell \interno{\BR_j X}{\BR_i Z}\nu_j = \\
			= \sum_{\substack{j=1\\ j \ne i}}^\ell\interno{X}{\BR_j \BR_i Z}\nu_j \stackrel{\eqref{eq: RST2}}{=} 0.
		\end{multline*}

		Therefore the claim holds.
	\end{afi}

	Now we can apply Moore's Lemma (see \cite{Detc}) and conclude that there is an orthogonal decomposition $\RN = V_0 \op V_1 \op V_2$ and a vector $v_0 \in V_0$ such that $F \colon M_1 \x M_2 \to \RN$ is given by $F(x,y) = (v_0, F_1(x), F_2(y))$. Besides,
	\begin{align*}
		& V_1 = \spa\set{F_*(p) X}{p \in M_1 \x M_2 \ \text{e} \ X \in \ker \BR_i|_{T_p M}}, \\
		& V_2 = \spa\set{F_*(p) Y}{p \in M_1 \x M_2 \ \text{e} \ Y \in \BR_i\left(T_p M \right)}
	\end{align*}
	and $F_i(M_i) \subset V_i$.

	On the other side, $\pi_if_*(\ker \BR_i) = \{0\}$ and $\left.\pi_i\right|_{f_*\ker (\id - \BR_i)} = \id$, hence
	\begin{align*}
		& \spa\set{F_*(p) X}{p \in M_1 \x M_2 \ \text{e} \ X \in \ker \BR_i|_{T_p M}} \subset \prod_{\substack{j=1\\j \ne i}}^\ell \R^{N_j},\\
		& \spa\set{F_*(p) Y}{p \in M_1 \x M_2 \ \text{e} \ Y \in \BR_i\left(T_p M \right)} \subset \R^{N_i},
	\end{align*}
	therefore $F_1(M_1)\subset \prod\limits_{\substack{j=1\\j \ne i}}^\ell \R^{N_j}$ and $F_2(M_2) \subset \R^{N_i}$.
	
	Lets define $\tilde f_1 \colon M_1 \to \prod\limits_{\substack{j=1\\j \ne i}}^\ell \R^{N_j}$ and $\tilde f_2 \colon M_2 \to \R^{N_i}$ by $\tilde f_1(x) := \Pi(v_0) + F_1(x)$ and $\tilde f_2(y) = \pi_i(v_0) + F_2(y)$, where $\Pi \colon \RN \to \prod\limits_{\substack{j=1\\j \ne i}}^\ell \R^{N_j}$ is the orthogonal projection. Hence
	\[F(x,y) = \big(\tilde f_1(x), \tilde f_2(y)\big) \in \left( \prod_{\substack{j=1\\ j \ne i}}^\ell \o{j}\right) \x \o{i}.\]
	Now, if $f_1 \colon M_1 \to \prod\limits_{\substack{j=1\\ j \ne i}}^\ell \o{j}$ and $f_2 \colon M_2 \to \o{i}$ are given by $f_1(x) = \tilde f_1(x)$ and $f_2(y) = \tilde f_2(y)$, then $f = f_1 \x f_2$.
	
	If $M$ is complete and simply connected, the De Rham's Lemma assure us that $M$ is (globally) isometric to $L_1 \x L_2$, where $L_1$ and $L_2$ are the leafs of $\ker \BR_i$ and $(\ker \BR_i)^\perp$ (respectively) at the same point. In this case, considering $f \colon L_1 \x L_2 \to \hO$, the calculations made above show us that $f$ is globally a product immersion.
\end{proof}

\begin{cor}
	Let $f \colon M^m \to \hO$ be an isometric immersion. The following claims are equivalent:
	\begin{enum}
		\item $M$ is locally (isometric to) a product manifold $M_1^{m_1}\x \cdots \x M_\ell^{m_\ell}$, with $0 < m_i < m$, and $f$ is locally a product immersion $f|_{M_1\x \cdots \x M_\ell} = f_1 \x \cdots \x f_\ell$, where each $f_i \colon M_i \to \o{i}$ is an isometric immersion.
		\item For each $i \in \{1, \cdots, \ell\}$, $\BS_i = 0$ and $\dim (\ker \BR_i) = m_i$, with $0 < m_i < m$.
	\end{enum}

	Besides, if $M$ is complete and simply connected and the second claim holds, then $M$ is globally an isometric product $M_1^{m_1} \x \cdots \x M_\ell^{m_\ell}$ and $f$ is globally a product immersion.
\end{cor}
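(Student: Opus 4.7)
The plan is to argue by induction on the number $\ell$ of factors, using Proposition \ref{prop: S=0} to carry out both the base case and the inductive step.

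For \textsl{(I)} $\Rightarrow$ \textsl{(II)}, a direct computation suffices. If $f = f_1 \x \cdots \x f_\ell$ on $M_1 \x \cdots \x M_\ell$ with $f_j \colon M_j \to \o{j}$, then for $X$ tangent to the $j$-th factor $\pi_i f_* X$ equals $f_* X$ when $i = j$ and vanishes when $i \ne j$. In view of \eqref{eq: K L} this gives $\BS_i = 0$ everywhere, $\BR_i|_{TM_i} = \id$ and $\BR_i|_{TM_j} = 0$ for $j \ne i$, so $\ker \BR_i = \bigoplus_{j \ne i} T M_j$ has the dimension required by \textsl{(II)}.

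For \textsl{(II)} $\Rightarrow$ \textsl{(I)} I induct on $\ell$; the case $\ell = 2$ is exactly Proposition \ref{prop: S=0}. In the inductive step I apply that proposition with $i = \ell$, obtaining a local splitting $M = M' \x M_\ell$ with $f = f' \x f_\ell$, where $f' \colon M' \to \prod_{j < \ell}\o{j}$ and $f_\ell \colon M_\ell \to \o{\ell}$ are isometric immersions. It remains to check that $f'$ still satisfies hypothesis \textsl{(II)} with $\ell - 1$ factors. The key algebraic observation is that feeding $\BS_j = 0$ for every $j$ into \eqref{eq: RST} and \eqref{eq: RST2} forces $\BR_j^2 = \BR_j$ and $\BR_i \BR_j = 0$ whenever $i \ne j$; together with $\sum_i \BR_i = \id$ from \eqref{somas}, this makes the $\BR_i$ a complete orthogonal family of projections and gives $TM = \bigoplus_{i=1}^\ell \BR_i(TM)$. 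In particular $\BR_j(TM) \subset \ker \BR_\ell = TM'$ for every $j < \ell$, so for $X \in TM'$
\[\pi_j f'_* X = \pi_j f_* X = f_* \BR_j X + \BS_j X = f_* \BR_j X \in f_*(TM'),\]
which identifies the new tensors as $\BR'_j = \BR_j|_{TM'}$ and $\BS'_j = 0$ for all $j < \ell$. The dimensions of the kernels $\ker \BR'_j$ inside $TM'$ then follow from the orthogonal decomposition, so the inductive hypothesis applies to $f'$ and produces the remaining factorisation $f' = f_1 \x \cdots \x f_{\ell - 1}$.

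The main obstacle is the bookkeeping in the inductive step: I must confirm that, once the proposition peels off the $\ell$-th factor, the operators $\BR_j$ with $j < \ell$ really behave as the $\BR$-tensors of the restricted immersion $f'$, and that their kernels have the dimensions needed to re-invoke the hypothesis. Everything hinges on the algebraic collapse $\BR_i \BR_j = 0$ for $i \ne j$ extracted above. Finally, when $M$ is complete and simply connected, Proposition \ref{prop: S=0} yields a \emph{global} splitting at each step, so the same induction delivers a global product representation of $f$.
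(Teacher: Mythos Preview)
Your argument is correct and is precisely the intended derivation: the paper states this result as a corollary of Proposition~\ref{prop: S=0} without spelling out a proof, and the natural way to obtain it is exactly the induction on $\ell$ you carry out. Your extraction of $\BR_i\BR_j = 0$ (for $i\ne j$) and $\BR_i^2 = \BR_i$ from \eqref{eq: RST}--\eqref{eq: RST2} under the hypothesis $\BS_j \equiv 0$ is the key algebraic point that makes the inductive step go through, and your verification that the restricted tensors satisfy $\BR'_j = \BR_j|_{TM'}$ and $\BS'_j = 0$ is exactly the bookkeeping required to re-invoke the hypothesis on $f'$.
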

		\subsection{Other products of isometric immersions}

Let $I = \{i_1, \cdots, i_{\ell_1}\} \subsetneq \{1, \cdots, \ell\}$, with $i_1 < i_2 < \cdots < i_{\ell_1}$. Other products of isometric immersions are the following kind of immersions
\[\begin{matrix}
	f_1 \x f_2 : & M_1 \x M_2 & \longrightarrow & \left(\prod\limits_{i\in I} \o{i} \right) \x \left(\prod\limits_{\substack{i=1 \\ i \notin I}}^\ell \o{i} \right) \\
	& (x,y) & \longmapsto & \big(f_1(x), f_2(y)\big),
\end{matrix}\]
where $f_1 \colon M_1 \to \prod\limits_{i\in I} \o{i}$ and $f_2 \colon M_2 \to \prod\limits_{\substack{i=1 \\ i \notin I}}^\ell \o{i}$ are isometric immersions.

Let $\Pi_1$ and $\Pi_2$ be the orthogonal projections given by
\begin{align*}
	\begin{matrix}
		\Pi_1 : & \RN & \longrightarrow & \left(\prod\limits_{i \in I} \R^{N_i} \right) \x \left(\prod\limits_{\substack{i=1 \\ i \notin I}}^\ell \R^{N_i}\right) \\
		& (x,y) & \longmapsto & (x,0)
	\end{matrix} \\
	\begin{matrix}
		\Pi_2 : & \RN & \longrightarrow & \left(\prod\limits_{i \in I} \R^{N_i} \right) \x \left(\prod\limits_{\substack{i=1 \\ i \notin I}}^\ell \R^{N_i}\right) \\
		& (x,y) & \longmapsto & (0,y);
	\end{matrix}
\end{align*}
Hence, $\Pi_1 = \sum\limits_{i \in I} \pi_i$ and $\Pi_2 = \sum\limits_{\substack{i=1 \\ i \notin I}}^\ell \pi_i$.

If $X \in T_x M$ and $\xi \in T_x^\perp M$, then
\[\begin{aligned}
	& \Pi_1 f_* X = \sum_{i \in I} \pi_i f_* X = \sum_{i \in I} \left[ f_* \BR_i X + \BS_i X \right]= f_* \sum_{i \in I} \BR_i X + \sum_{i \in I} \BS_i X.\\
	& \Pi_1 \xi = \sum_{i \in I} \pi_i \xi = \sum_{i \in I} \left[ f_* \BS^\t_i X + \BT_i X \right]= f_* \sum_{i \in I} \BS^\t_i X + \sum_{i \in I} \BT_i X.
\end{aligned}\]
Analogously $\Pi_2 f_* X = f_* \sum\limits_{\substack{i=1 \\ i \notin I }}^\ell \BR_i X + \sum\limits_{\substack{i=1 \\ i \notin I }}^\ell \BS_i X$, and $\Pi_2 \xi = f_* \sum\limits_{\substack{i=1 \\ i \notin I }}^\ell \BS^\t_i X + \sum\limits_{\substack{i=1 \\ i \notin I }}^\ell \BT_i X$. Consequently, the following equations hold
\begin{align*}
	\left(\Pi_1 f_* X \right)^T &= f_* \sum_{i \in I} \BR_i X, & \left( \Pi_1 f_* X\right)^\perp &= \sum_{i \in I} \BS_i X,\\
	\left(\Pi_2 f_* X \right)^T &= f_* \sum_{\substack{i=1 \\ i \notin I }}^\ell \BR_i X, & \left( \Pi_2 f_* X\right)^\perp &= \sum_{\substack{i=1 \\ i \notin I }}^\ell \BS_i X, \\
	\left(\Pi_1 \xi \right)^T &= \sum_{i \in I} \BS^\t_i X, & \left(\Pi_1 \xi \right)^\perp &= \sum_{i=1}^{\ell_1} \BT_i \xi, \\
	\left(\Pi_2 \xi \right)^T &= \sum_{\substack{i=1 \\ i \notin I }}^\ell \BS^\t_i X, & \left(\Pi_2 \xi \right)^\perp &= \sum_{\substack{i=1 \\ i \notin I }}^\ell \BT_i \xi.
\end{align*}

So, for each $i\in \{1, 2\}$, lets consider the tensors $\TL_i \colon TM \to T\hO$ and $\TK_i \colon T^\perp M \to T\hO$ given by $\TL_i X = \Pi_i f_*X$ and $\TK_i \xi = \Pi_i \xi$. Let also $\TR_i := \TL^\t_i \TL_i$, $\TS_i := \TK_i^\t \TL_i$ and $\TT_i := \TK_i^\t\TK_i$. So, from calculations analogous to those made for the tensors $\BR_i$, $\BS_i$ and $\BT_i$, it follows that:
\begin{align*}
	&\TL_i X = f_*\TR_i X + \TS_i X, && \TK_i \xi = f_*\TS_i^\t \xi + \TT_i \xi, \\
	& \TR_1 + \TR_2 = \id|_{TM}, && \TS_1 = -\TS_2, && \TT_1 + \TT_2 = \id|_{T^\perp M}.	
\end{align*}
Hence
\begin{align*}
	& \TR_1 = \sum_{i \in I} \BR_i, && \TS_1 = \sum_{i \in I} \BS_i, && \TT_1 = \sum_{i \in I} \BT_i,\\
	& \TR_2 = \sum_{\substack{i=1 \\ i \notin I }}^\ell \BR_i, && \TS_2 = \sum_{\substack{i=1 \\ i \notin I }}^\ell \BS_i, && \TT_2 = \sum_{\substack{i=1 \\ i \notin I }}^\ell \BT_i.
\end{align*}

Also from calculations analogous to those made for $\BR_i$, $\BS_i$ and $\BT_i$, we have the following equations:
\begin{align}
	& \TS_i^\t\TS_i = \TR_i(\id - \TR_i), && \TT_i\TS_i = \TS_i( \id - \TR_i), && \TS_i\TS_i^\t = \TT_i(\id - \TT_i), \label{equação: TR TS TT} \\
	 & \TS_i^\t\TS_j \stackrel{i \ne j}{=} -\TR_i\TR_j, && \TT_i \TS_j \stackrel{i \ne j}{=} -\TS_i\TR_j, && \TS_i \TS_j^\t \stackrel{i\ne j}{=} -\TT_i\TT_j.
\end{align}

\begin{lem}\label{lem: TR=0}
	Let $f \colon M^m \to \hO$ be an isometric immersion. Then
	\begin{enum}
		\item $f(M) \subset \{z\} \x \left(\prod\limits_{\substack{i=1 \\ i \notin I}}^\ell \o{i} \right)$ for some $z \in \prod\limits_{i \in I} \o{i}$ if, and only if, $\sum\limits_{i \in I} \BR_i = 0$.
		\item $f(M) \subset \left(\prod\limits_{i \in I} \o{i} \right) \x \{z\}$, for some $z \in \prod\limits_{\substack{i=1 \\ i \notin I}}^\ell \o{i}$ if, and only if, $\sum\limits_{i \in I} \BR_i = \id$.
	\end{enum}
\end{lem}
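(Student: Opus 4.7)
The plan is to mimic verbatim the proof of Lemma \ref{lem: R=0}, replacing the single projection $\pi_i$ by the block projections $\Pi_1 = \sum_{i\in I}\pi_i$ and $\Pi_2 = \sum_{i\notin I}\pi_i$, and using the tensors $\TR_1 = \sum_{i\in I}\BR_i$ and $\TR_2 = \sum_{i\notin I}\BR_i$ introduced in the preceding discussion.

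For \textsl{(I)}, my first step is to observe the geometric reformulation: $f(M)\subset\{z\}\x\prod_{i\notin I}\o{i}$ for some $z$ is equivalent to saying the $I$-block of $f$ is constant, which (on a connected $M$, as implicitly assumed) is equivalent to $\Pi_1 f_*X = 0$ for every $X\in TM$. Next, I would use $\Pi_1 f_* = \TL_1$ together with $\TR_1 = \TL_1^{\t}\TL_1$ to polarize:
\[
\interno{\TR_1 X}{Y} = \interno{\TL_1 X}{\TL_1 Y} = \interno{\Pi_1 f_*X}{\Pi_1 f_*Y},
\]
so that $\Pi_1 f_*X = 0$ for all $X$ if and only if $\TR_1 = 0$, i.e.\ $\sum_{i\in I}\BR_i = 0$. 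This closes \textsl{(I)}.

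For \textsl{(II)}, I would run the same argument with $\Pi_2$ in place of $\Pi_1$: the condition $f(M)\subset\left(\prod_{i\in I}\o{i}\right)\x\{z\}$ translates into $\Pi_2 f_*X = 0$ for every $X\in TM$, which by the same polarization gives $\TR_2 = 0$, i.e.\ $\sum_{i\notin I}\BR_i = 0$. Finally, using the identity $\sum_{i=1}^{\ell}\BR_i = \id$ from \eqref{somas}, this is equivalent to $\sum_{i\in I}\BR_i = \id$, as desired.

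There is really no substantive obstacle here: once the bookkeeping of $\Pi_1$, $\Pi_2$, $\TL_1$, $\TR_1$ is in place (already done in the paragraphs preceding the lemma), both directions of each equivalence are forced. The only mild point to double-check is the passage from ``$\Pi_1 f_*X = 0$ for every $X$'' to ``the $I$-block of $f$ is constant,'' which needs $M$ to be connected (or, otherwise, one gets one such $z$ per connected component); I would state this assumption explicitly at the start of the proof.
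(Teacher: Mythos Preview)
Your proposal is correct and follows exactly the route the paper indicates: the paper's proof of this lemma consists of the single line ``The proof is analogous to the proof of Lemma \ref{lem: R=0},'' and what you wrote is precisely that analogy spelled out with $\Pi_1$, $\Pi_2$, $\TL_1$, $\TR_1$, $\TR_2$ in place of $\pi_i$, $\BL_i$, $\BR_i$. Your observation about connectedness is a fair caveat that the paper leaves implicit (here and already in Lemma \ref{lem: R=0}).
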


\begin{proof}
	The proof is analogous to the proof of Lemma \ref{lem: R=0}.
\end{proof}

\begin{lem}\label{lem: ker TS}
	For each $i \in \{1,2\}$,
	\[\ker \TS_i = \ker \TR_i \op \ker \left(\id - \TR_i\right) \quad \text{and} \quad \TR_i\left(\ker \TS_i\right) = \ker\left(\id - \TR_i\right).\]
\end{lem}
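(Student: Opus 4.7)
The plan is to adapt the proof of Lemma~\ref{lem: ker S} almost verbatim, substituting the tilded tensors for the bold ones, since the algebraic identities in \eqref{equa��o: TR TS TT} are exactly the analogues of those in \eqref{eq: RST} used there. The only structural facts required are that $\TR_i$ is self-adjoint (which is immediate from $\TR_i = \TL_i^\t\TL_i$) and the identity $\TS_i^\t\TS_i = \TR_i(\id - \TR_i)$.

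First I would observe the elementary fact that for any linear map $A$ between inner product spaces one has $\ker A = \ker A^\t A$, applied to $A = \TS_i$. Combined with the first equation in \eqref{equa��o: TR TS TT}, this yields
\[
\ker \TS_i = \ker\bigl(\TR_i(\id - \TR_i)\bigr) = \set{X \in T_xM}{\TR_i X = \TR_i^2 X}.
\]
From this description one immediately reads off the inclusions $\ker \TR_i \subset \ker \TS_i$ and $\ker(\id - \TR_i) \subset \ker \TS_i$, and these two subspaces are orthogonal because $\TR_i$ is self-adjoint and its eigenvalues $0$ and $1$ are distinct.

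Next I would establish the claim that $\TR_i(\ker \TS_i) \subset \ker \TS_i$: if $X \in \ker\TS_i$, then $\TR_i(\TR_iX) = \TR_i^2(\TR_iX)$, so $\TR_iX$ also lies in $\ker\TS_i$. Therefore the restriction $\TR_i|_{\ker\TS_i}$ is self-adjoint and satisfies $\TR_i^2 = \TR_i$ on $\ker\TS_i$, so it is an orthogonal projection on that subspace. This immediately yields the splitting
\[
\ker\TS_i = \ker\bigl(\TR_i|_{\ker\TS_i}\bigr) \op \TR_i(\ker\TS_i).
\]

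Finally I would identify $\TR_i(\ker\TS_i)$ with $\ker(\id - \TR_i)$ by a double inclusion, exactly as in Lemma~\ref{lem: ker S}: if $Y = \TR_iX$ with $X \in \ker\TS_i$, then $\TR_iY = \TR_i^2 X = \TR_iX = Y$, giving $Y \in \ker(\id - \TR_i)$; conversely, any $Y \in \ker(\id - \TR_i)$ already lies in $\ker\TS_i$ (by the previous inclusion) and satisfies $Y = \TR_iY \in \TR_i(\ker\TS_i)$. Combining this identification with the splitting above and absorbing $\ker(\TR_i|_{\ker\TS_i}) = \ker\TR_i$ (since $\ker\TR_i \subset \ker\TS_i$) gives both conclusions. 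There is no real obstacle here — every step used for the $\BR_i,\BS_i,\BT_i$ case transports formally to the $\TR_i,\TS_i,\TT_i$ case since the defining identities are the same.
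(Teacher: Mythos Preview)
Your proposal is correct and follows exactly the approach the paper takes: the paper's proof of this lemma consists solely of the remark that it is analogous to the proof of Lemma~\ref{lem: ker S}, and you have spelled out precisely that analogous argument, using the identity $\TS_i^\t\TS_i = \TR_i(\id - \TR_i)$ from \eqref{equa��o: TR TS TT} in place of the corresponding identity from \eqref{eq: RST}.
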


\begin{proof}
	The proof is analogous to the proof of Lemma \ref{lem: ker S}
\end{proof}


\begin{lem}\label{lem: TS=0}
	Let $f\colon M^m \to \hO$ be a isometric immersion. If $M$ is connected and $\sum\limits_{i \in I}\BS_i = 0$, then $\ker \sum\limits_{i \in I} \BR_i$ and $\ker \sum\limits_{\substack{i=1 \\ i \notin I}}^\ell \BR_i$ have constant dimension in $M$.
\end{lem}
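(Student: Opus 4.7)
The plan is to imitate the proof of Lemma \ref{lem: S=0} but using the tensors $\TR_1 := \sum_{i\in I} \BR_i$, $\TR_2 := \sum_{i\notin I} \BR_i$, $\TS_1 := \sum_{i\in I}\BS_i$ in place of $\BR_i$, $\id - \BR_i$, $\BS_i$. First I would observe that the hypothesis $\sum_{i\in I}\BS_i = 0$ reads as $\TS_1 = 0$, and since $\TS_1 = -\TS_2$ we also have $\TS_2 = 0$. By Lemma \ref{lem: ker TS} applied to $\TS_1$, we get a pointwise orthogonal decomposition
\[
TM \;=\; \ker \TS_1 \;=\; \ker \TR_1 \oplus \ker(\id - \TR_1) \;=\; \ker \TR_1 \oplus \ker \TR_2,
\]
the last equality because $\TR_1 + \TR_2 = \id$. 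In particular, $\dim\ker\TR_1|_{T_xM} + \dim\ker\TR_2|_{T_xM} = m$ for every $x \in M$.

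Next I would introduce, for each $j \in \{0,1,\dots,m\}$, the level set
\[
A_j := \{\,x \in M : \dim \ker \TR_1|_{T_xM} = j\,\},
\]
so that $M = \bigsqcup_{j=0}^{m} A_j$ is a disjoint union. The core of the proof is to show that every $A_j$ is open. Fix $p \in A_j$ and pick bases $Y_1,\dots,Y_j$ of $\ker\TR_1|_{T_pM}$ and $Y_{j+1},\dots,Y_m$ of $\ker\TR_2|_{T_pM}$; extend them to smooth local vector fields in a neighborhood $U$ of $p$. Since $\TR_2 Y_k = (\id - \TR_1) Y_k = Y_k$ for $k \le j$ at $p$, the vectors $(\id - \TR_1)Y_1,\dots,(\id - \TR_1)Y_j$ are linearly independent at $p$, hence (shrinking $U$) throughout $U$; similarly $\TR_1 Y_{j+1},\dots,\TR_1 Y_m$ are linearly independent throughout $U$. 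By continuity of the smooth tensors $\TR_1$ and $\id - \TR_1$, this gives $\dim \ker \TR_1|_{T_xM} \le j$ and $\dim \ker \TR_2|_{T_xM} \le m - j$ for every $x \in U$. Combined with $\dim\ker\TR_1 + \dim\ker\TR_2 = m$, both inequalities must be equalities in $U$, so $U \subset A_j$.

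Finally, since $M$ is connected and is partitioned into the disjoint open sets $A_0,\dots,A_m$, exactly one of them equals $M$. Thus $\dim \ker \TR_1 = \dim \ker \sum_{i\in I}\BR_i$ is constant on $M$, and consequently so is $\dim \ker \TR_2 = \dim\ker\sum_{i\notin I}\BR_i = m - \dim\ker\TR_1$.

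The main obstacle is purely the linear-algebra bookkeeping in showing $A_j$ is open for the mixed range $0 < j < m$; once the decomposition $TM = \ker \TR_1 \oplus \ker \TR_2$ is available from Lemma \ref{lem: ker TS}, everything else is a direct transcription of the argument already used for Lemma \ref{lem: S=0}, with no new analytic or geometric input needed.
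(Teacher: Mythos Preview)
Your proposal is correct and follows essentially the same approach as the paper: the paper also sets $\TS_1=\sum_{i\in I}\BS_i=0$, invokes Lemma~\ref{lem: ker TS} to obtain $TM=\ker\TR_1\oplus\ker(\id-\TR_1)$, defines the same level sets $A_j$, and proves each $A_j$ is open by referring verbatim to the argument of Claim~1 in Lemma~\ref{lem: S=0}, concluding by connectedness. Your write-up simply spells out that openness argument in full (and phrases the rank inequalities as $\le$ rather than $\ge$), but the logic is identical.
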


\begin{proof}
	If $\sum\limits_{i \in I}\BS_i = 0$ , then $\TS_1 = \sum\limits_{i \in I} \BS_i = 0$, and, by Lemma \ref{lem: ker TS}, $TM = \ker \TR_1 \op \ker \left(\id - \TR_1\right)$.
	
	For each $j \in \{0, \cdots, m\}$, let $A_j := \set{x \in M}{\dim \left(\ker \left.\TR_1\right|_{T_x M} \right) = j}$.

	\begin{afi}{$A_j$ is open.}
		Analogous to the proof of Claim 1 of Lemma \ref{lem: S=0}.
	\end{afi}

	But $M = \bigcup\limits_{j=0}^m A_j$ and $A_j \cap A_o = \varnothing$, if $j \ne o$. Then, since $M$ is connected, $M = A_j$, for some $j \in \{0, \cdots, m\}$, that is, $\ker \TR_1$ have constant dimension in $M$. Therefore $\ker \sum\limits_{i \in I} \BR_i$ and $\ker \sum\limits_{\substack{i=1 \\ i \notin I}}^\ell \BR_i$ have constant dimension in $M$
\end{proof}

\begin{prop}
	Let $f \colon M^m \to \hO$ be an isometric immersion, with $M$ connected. Thus the following claims are equivalent
	\begin{enum}
		\item $M$ is locally (isometric to) a product manifold $M_1^{m_1}\x M_2^{m_2}$, with $0 < m_2 < m$, and $f$ is locally a product immersion $f|_{M_1\x M_2} = f_1 \x f_2$, given by $f(x,y) = (f_1(x), f_2(y))$, where
		\[f_1 \colon M_1 \to \prod_{i \in I} \o{i} \quad \text{and} \quad f_2 \colon M_2 \to \prod_{\substack{i=1 \\ i \notin I}}^\ell \o{i}\]
		are isometric immersions.
		
		\item $\sum\limits_{i \in I} \BS_i = 0$, $\dim \left[\ker \left(\sum\limits_{i \in I} \BR_i \right)\right] = m_2$ and $0 < m_2 < m$.
	\end{enum}

	Besides, if \textsl{(II)} holds and $M$ is complete and simply connected, then $M$ is globally isometric to product manifold $M_1^{m_1} \x M_2^{m_2}$ and $f$ is globally an product immersion like in \textsl{(I)}.
\end{prop}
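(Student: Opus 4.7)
The plan is to mirror the proof of Proposition~\ref{prop: S=0} almost verbatim, with the tensors $\TR_1$, $\TS_1$, $\TT_1$ (together with the derivative identities obtained by summing \eqref{eq: derivada R}--\eqref{eq: derivada T} over $i \in I$) playing the roles that $\BR_i$, $\BS_i$, $\BT_i$ played there. For the direction \textsl{(I)}~$\Rightarrow$~\textsl{(II)}, I would argue directly: if $X \in {\imath_1^y}_* T_x M_1$ then $\Pi_2 f_* X = 0$, forcing $\TR_2 X = 0$ and $\TS_2 X = 0$; symmetrically, for $Y \in {\imath_2^x}_* T_y M_2$ one has $\TR_1 Y = 0$ and $\TS_1 Y = 0$. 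Since $\TS_1 = -\TS_2$, both vanish on all of $TM$, and $\dim \ker \TR_1 = m_2$.

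For \textsl{(II)}~$\Rightarrow$~\textsl{(I)} I would proceed in four steps. First, the hypothesis $\TS_1 = 0$ together with \eqref{equa��o: TR TS TT} gives $\TR_1(\id - \TR_1) = 0$, so $\TR_1$ is an orthogonal projection, and Lemmas~\ref{lem: ker TS} and~\ref{lem: TS=0} yield the decomposition $TM = \ker \TR_1 \op \ker(\id - \TR_1)$ with constant-dimensional summands. Second, summing \eqref{eq: derivada R} over $i \in I$ gives $(\n_X \TR_1) Y = A_{\TS_1 Y} X + \TS_1^\t \af{X}{Y} = 0$, so $\TR_1$ is parallel, both distributions are parallel, and De Rham's Theorem produces a local (global, under completeness and simple-connectedness) isometry $M \cong M_1 \x M_2$ with $T M_1 \cong \ker(\id - \TR_1)$ and $T M_2 \cong \ker \TR_1$. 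Third, I would verify $\aF{X}{Y} = 0$ for $X \in \ker \TR_1$ and $Y \in \ker(\id - \TR_1)$; this is the main technical step, discussed below. Fourth, Moore's Lemma yields $\RN = V_0 \op V_1 \op V_2$ with $F(x, y) = v_0 + F_1(x) + F_2(y)$, and the identities $\Pi_1 f_* X = f_* \TR_1 X + \TS_1 X = f_* X$ for $X \in \ker(\id - \TR_1)$, with the analogous statement for $\Pi_2$, force $V_1 \subset \prod_{i \in I} \R^{N_i}$ and $V_2 \subset \prod_{i \notin I} \R^{N_i}$; splitting $v_0 = \Pi_1 v_0 + \Pi_2 v_0$ and setting $f_1 := \Pi_1 v_0 + F_1$, $f_2 := \Pi_2 v_0 + F_2$ produces $f = f_1 \x f_2$.

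The hard part is step three. For the $\imath_* \af{X}{Y}$ piece of Lemma~\ref{lem: aF}, summing \eqref{eq: derivada S} gives $\af{X}{\TR_1 Y} = \TT_1 \af{X}{Y}$, and this combined with $\TR_1 Y = Y$, the symmetry of $\alpha_f$, and $\TR_1 X = 0$ kills it exactly as in the proof of Proposition~\ref{prop: S=0}. The genuinely new ingredient is the sum $\sum_{j=1}^\ell \interno{\BR_j X}{Y} \nu_j$, and I expect each term to vanish individually. The key observation is that, since $\pi_j \Pi_1 = \pi_j$ for $j \in I$, applying $\pi_j$ to the identity $\Pi_1 f_* X = f_* \TR_1 X + \TS_1 X = f_* \TR_1 X$ and comparing with $\pi_j f_* X = f_* \BR_j X + \BS_j X$ forces $\BR_j \TR_1 = \BR_j$ for every $j \in I$, and hence $\ker \TR_1 \subset \ker \BR_j$; analogously $\ker(\id - \TR_1) \subset \ker \BR_j$ for $j \notin I$. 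Consequently $\interno{\BR_j X}{Y} = 0$ for each $j$ (directly when $j \in I$, via self-adjointness of $\BR_j$ when $j \notin I$), which is precisely the input that was not needed in the single-index argument of Proposition~\ref{prop: S=0}.
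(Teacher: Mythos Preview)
Your proposal is correct and follows essentially the same route as the paper: both directions mirror Proposition~\ref{prop: S=0}, with $\TR_1,\TS_1,\TT_1$ replacing $\BR_i,\BS_i,\BT_i$, De~Rham plus Moore's Lemma doing the structural work, and the summed identity $\alpha_f(X,\TR_1 Y)=\TT_1\alpha_f(X,Y)$ killing the $\imath_*\alpha_f$ piece. Your handling of the $\nu_j$ terms via the algebraic identity $\BR_j\TR_1=\BR_j$ for $j\in I$ (hence $\ker\TR_1\subset\ker\BR_j$) is exactly the paper's observation that $\Pi_1 f_*X=f_*X$ forces $\pi_j f_*Y=0$ for the relevant indices, just read off on the tangential component; the two arguments are the same computation in different clothing.
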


\begin{obs}\label{obsevação: soma Si}
	$\sum\limits_{i \in I} \BS_i = 0 \stackrel{\eqref{somas}}{\sss} \sum\limits_{\substack{i=1 \\ i \notin I}}^\ell \BS_i =0$.
\end{obs}

\begin{proof}
	\textbf{\textsl{(I)} $\Rightarrow$ \textsl{(II)}:} Lets suppose that $M$ is locally a product manifold $M_1^{m_1}\x M_2^{m_2}$ and that
	\[\begin{matrix}
		f|_{M_1\x M_2} = f_1 \x f_2 : & M_1 \x M_2 & \longrightarrow & \left(\prod\limits_{i \in I} \o{i} \right) \x \left(\prod\limits_{\substack{i=1 \\ i \notin I}}^\ell \o{i} \right) \\
		& (x,y) & \longmapsto & \big(f_1(x), f_2(y)\big).
	\end{matrix}\]
	
	Let $\imath_1^y \colon M_1 \to M$ and $\imath_2^x \colon M_2 \to M$ be given by $\imath_1^y(z) = (z,y)$ and $\imath_2^x (z) = (x,z)$. Hence, $T_{(x,y)} M = {\imath_1^y}_* T_x M_1 \op {\imath_2^x}_* T_y M_2$.
	
	If $X \in {\imath_1^y}_* T_x M_1$, then $X = {\imath_1^y}_* \tilde X = \left(\tilde X, 0 \right) \in T_{(x,y)} \left(\prod\limits_{i \in I} \o{j} \right) \x \left(\prod\limits_{\substack{i=1 \\ i \notin I}}^\ell \o{i} \right)$ and
	$\Pi_2 f_*X = \Pi_2 f_* \left( \tilde X, 0 \right) = \Pi_2\left({f_1}_* \tilde X, {f_2}_* 0 \right) = 0$, thus $\TR_2 X = 0$, $\TS_2 X = 0$ and $\TS_1X= 0$.
	
	Analogously, if $Y \in {\imath_2^x}_* T_y M_2$ then $\TR_1 Y = 0$, $\TS_1 Y = 0$ and $\TS_2 Y = 0$. Thus $\sum\limits_{i \in I} \BS_i = 0$, ${\imath_1^y}_* T_x M_1 \subset \ker \TR_2$ and ${\imath_2^x}_* T_y M_2 \subset \ker \TR_1$.
	
	So $TM = \ker \TS_1 = \ker \TR_1 \op \ker\left( \id - \TR_1 \right) = \ker \TR_1 \op \ker \TR_2$. Therefore ${\imath_1^y}_* T_x M_1 = \ker\TR_2$, ${\imath_2^x}_* T_x M_2 = \ker\TR_1$ and $\dim \ker\left( \sum\limits_{i \in I} \BR_i \right) = m_2$.
	\vspace{1ex}

	\noindent \textbf{\textsl{(II)} $\Rightarrow$ \textsl{(I)}:} By Lemma \ref{lem: ker TS}, $TM = \left( \ker\TR_2 \right) \op \left( \ker \TR_1\right)$ and, by Lemma \ref{lem: TS=0}, $\ker \TR_2  = \ker \sum\limits_{\substack{i=1 \\ i \notin I }}^{\ell_1} \BR_i$ and $\ker \TR_1= \ker \sum\limits_{i \in I} \BR_i$ are distributions on $M$.

	\begin{afi}{$\ker \TR_2$ and $\ker \TR_1$ are parallel distributions.}
		Let $Y \in \G(\ker \TR_1)$ and $X \in \G(TM)$. Hence,
		\begin{align*}
			& \n_X \TR_1 Y = \n_X \sum_{i \in I} \BR_i Y \stackrel{\eqref{eq: derivada R}}{=} \sum_{i \in I} \left( \BR_i \n_X Y + A_{\BS_i Y} X + \BS^\t_i \al{X}{Y} \right) = \\
			&= \left(\sum_{i \in I} \BR_i \right)\n_X Y + A_{\sum\limits_{i \in I} \BS_i Y} X + \left(\sum_{i \in I} \BS_i \right)^\t \al{X}{Y} =\\
			& = \TR_1 \n_X Y + A_{\TS_1 Y} X +\TS_1^\t \al{X}{Y} = \TR_1 \n_X Y.
		\end{align*}
		Therefore $\ker \TR_1$ is parallel and the same holds for $\left(\ker \TR_1\right)^\perp = \ker \TR_2$.
	\end{afi}

	For each $x \in M$, let $L_1^{m_1}(x)$ and $L_2^{m_2}(x)$ be integral submanifolds of $\ker\TR_2$ and $\ker\TR_1$, respectively, at $x$. Thus, by De Rham's Lemma (see \cite{dR} and \cite{RS}), for each $x \in M$, there is a neighborhood $U$ of $x$, and open sets $M_1^{m_1}$ and $M_2^{m_2}$ of $L_1^{m_1}(x)$ and $L_2^{m_2}(x)$ (respectively) and there is an isometry $\psi \colon M_1\x M_2 \to U$ such that $x \in U \cap M_1 \cap M_2$ and, for each $(x_1,x_2) \in M_1\x M_2$, $\psi \left( M_1 \x\{x_2\} \right)$ is a leaf of $\ker\TR_2$ and $\psi \left(\{x_1\}\x M_2 \right)$ is a leaf of $\ker\TR_1$.

	So we can identify $U$ with $M_1^{m_1} \x M_2^{m_2}$, $\ker\TR_2$ with $T M_1$ and $\ker\TR_1$ with $T M_2$ and we can consider the applications $f \colon M_1^{m_1} \x M_2^{m_2} \to \hO$ and $F = \imath \circ f \colon M_1^{m_1}\x M_2^{m_2} \to \RN$, where $\imath \colon \hO \hookrightarrow \RN$ is the canonical inclusion.

	\begin{afi}{If $X \in \ker\TR_2$ and $Y \in \ker\TR_1$, then $\aF{X}{Y} = 0$.}
		Let $X \in \ker\left.\TR_2\right|_{T_xM}$ and $Y \in \ker\left.\TR_1\right|_{T_xM}$. We know that $\TR_1$ is a orthogonal projection in $T_x M$, because it is self-adjoint and $\TR_1^2 = \TR_1$, thus $\ker\TR_2 = \ker\left( \id - \TR_1 \right) = \TR_1(TM)$. Hence, $X = \TR_1 Z$, for some $Z \in TM$, and
		\begin{multline*}
			\al{X}{Y} = \al{\TR_1 Z}{Y} = \al{\sum_{i \in I} \BR_i Z}{Y} \stackrel{\eqref{eq: derivada S}}{=} \sum_{i \in I}\BT_i \al{Z}{Y} = \\
			\stackrel{\eqref{eq: derivada S}}{=} \sum_{i \in I}\al{Z}{\BR_i Y} = \al{Z}{\TR_1 Y} = 0.
		\end{multline*}

		On the other side, $\TS_1 = 0 = \TS_2$, thus $\Pi_1 f_* X = f_* \TR_1 X + \TS_1 X = f_* X$ and $\Pi_2 f_* Y = f_* \TR_2 Y + \TS_2 Y = f_* Y$,
		that is, $f_* X \in \prod\limits_{i \in I} \R^{N_i}$ and $f_* Y \in \prod\limits_{\substack{i=1 \\ i \notin I}}^\ell \R^{N_i}$. It follows that $\pi_i f_* X = 0$, $\forall i \notin I$, and $\pi_i f_* Y = 0$, $\forall i \in I$. Thus
		\begin{multline*}
			\aF{X}{Y} = \imath_* \af{X}{Y} + \sum_{i=1}^\ell \interno{\pi_i f_* X}{f_*Y}\nu_i = \\
			= \sum_{i \in I} \interno{f_* X}{\pi_i f_*Y}\nu_i +  \sum_{\substack{i=1 \\ i \notin I} }^\ell \interno{\pi_i f_*X}{f_*Y}\nu_i =  0.
		\end{multline*}

		Therefore the claim holds.
	\end{afi}

	Now, by Moore's Lemma (see \cite{Detc}), there is a orthogonal decomposition $\RN = V_0 \op V_1 \op V_2$ and a vector $v_0 \in V_0$ such that $F \colon M_1 \x M_2 \to \RN$ is given by $F(x,y) = (v_0, F_1(x), F_2(y))$. Besides
	\begin{align*}
		& V_1 = \spa\set{F_*(p) X}{p \in M_1 \x M_2 \ \text{and} \ X \in \left.\ker\TR_2\right|_{T_p M}}, \\
		& V_2 = \spa\set{F_*(p) Y}{p \in M_1 \x M_2 \ \text{and} \ Y \in \left.\ker\TR_1\right|_{T_p M}},
	\end{align*}
	and $F_i(M_i) \subset V_i$.

	But, $\Pi_if_*(\ker \TR_i) = \{0\}$ and $\left.\Pi_i\right|_{f_*\ker (\id - \TR_i)} = \id$, thus
	\begin{align*}
		& \spa\set{F_*(p) X}{p \in M_1 \x M_2 \ \text{and} \ X \in \ker \left.\TR_2\right|_{T_p M}} \subset \prod_{i \in I} \R^{N_i} \quad \text{and}\\
		& \spa\set{F_*(p) Y}{p \in M_1 \x M_2 \ \text{and} \ Y \in \ker \left.\TR_1\right|_{T_p M}} \subset \prod_{\substack{i=1 \\ i \notin I}}^\ell \R^{N_i}.
	\end{align*}
	Therefore $F_1(M_1)\subset\prod\limits_{i \in I} \R^{N_i}$ and $F_2(M_2) \subset \prod\limits_{\substack{i=1 \\ i \notin I }}^\ell \R^{N_i}$. Hence we can define $f_1 \colon M_1 \to \prod\limits_{i \in I} \R^{N_i}$ and $f_2 \colon M_2 \to \prod\limits_{\substack{i=1 \\ i \notin I}}^\ell \R^{N_i}$ by $f_1(x) := \Pi_1(v_0) + F_1(x)$ and $f_2(y) = \Pi_2(v_0) + F_2(y)$.	Consequently
	\[f(x,y) = \big(f_1(x), f_2(y)\big) \in \left( \prod_{i \in I} \o{i}\right) \x \left( \prod_{\substack{i=1 \\ i \notin I }}^\ell\o{i}\right).\]

	If $M$ is complete and simply connected, De Rham's Lemma assure us that $M$ is (globally) isometric to $L_1 \x L_2$, where $L_1$ and $L_2$ are leafs of $\ker \TR_2$ and $(\ker \BT_2)^\perp$ (respectively) at the same point. In this case, considering $f \colon L_1 \x L_2 \to \hO$, the calculations made above show that $f$ is globally a product immersion.
\end{proof}

\begin{cor}
	Let $f \colon M \to \hO$ be an isometric immersion and let $\ell_1$, $\cdots$, $\ell_n$ be positive natural numbers such that $\sum\limits_{j=1}^n \ell_j = \ell$. Let also $I_1 := \{i_1^1, \cdots, i_{\ell_1}^1\}$, $\cdots$, $I_n := \{i_1^n, \cdots, i_{\ell_n}^n\}$ be disjoint sets such that $\bigcup\limits_{j=1}^n I_j = \{1, \cdots, \ell\}$. Then the following claims are equivalent:
	\begin{enum}
	\item $M$ is locally (isometric to) a product manifold $M_1^{m_1}\x \cdots \x M_n^{m_n}$, with $0 < m_j < m$, and $f$ is locally a product immersion $f|_{M_1\x \cdots \x M_n} = f_1 \x \cdots \x f_n$, given by $f(x_1,\cdots, x_n) = (f_1(x_1), \cdots, f_n(x_n))$, where $f_j \colon M_j \to \prod\limits_{i \in I_j} \o{i}$ is a isometric immersion, for each $j \in \{1, \cdots, n\}$.

	\item For each $j \in \{1, \cdots, n\}$, $\sum\limits_{i \in I_j} \BS_i = 0$ and $0< \dim \ker \left(\sum\limits_{i \in I_j} \BR_i \right) < m$.
\end{enum}

Besides, if \textsl{(II)} holds and $M$ is complete and simply connected, then $M$ is globally isometric to product manifold $M_1^{m_1} \x \cdots \x M_n^{m_n}$ and $f$ is globally an product immersion like in \textsl{(I)}.
\end{cor}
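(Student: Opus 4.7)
The plan is a direct extension of the proof of the preceding Proposition from two factors to $n$ factors, working with the aggregate tensors $\TR_j := \sum_{i \in I_j}\BR_i$, $\TS_j := \sum_{i \in I_j}\BS_i$, $\TT_j := \sum_{i \in I_j}\BT_i$. For (II) $\Rightarrow$ (I), the hypothesis $\TS_j = 0$ combined with \eqref{equa��o: TR TS TT} makes each $\TR_j$ and each $\TT_j$ into a self-adjoint idempotent, hence an orthogonal projection. The crucial new step for $n \ge 3$ is mutual orthogonality: expanding $\TR_j\TR_k$ for $j \ne k$ and using \eqref{eq: RST2} termwise (indices $i \in I_j$ and $i' \in I_k$ are automatically distinct) yields
\[\TR_j\TR_k = \sum_{i \in I_j}\sum_{i' \in I_k}\BR_i\BR_{i'} = -\sum_{i \in I_j}\sum_{i' \in I_k}\BS_i^\t \BS_{i'} = -\TS_j^\t\TS_k = 0,\]
and analogously $\TT_j\TT_k = 0$. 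Together with $\sum_j \TR_j = \id$ from \eqref{somas}, this produces an orthogonal decomposition $TM = V_1 \op \cdots \op V_n$, where $V_j := \TR_j(TM) = \ker(\id - \TR_j)$; the bounds in (II) give $0 < m_j := \dim V_j < m$ and $\sum_j m_j = m$.

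Next, each $V_j$ is a parallel distribution: for $Y \in \G(V_j)$, $\n_X Y = \n_X(\TR_j Y) = (\n_X \TR_j)Y + \TR_j\n_X Y$, and summing \eqref{eq: derivada R} over $i \in I_j$ gives $(\n_X \TR_j)Y = A_{\TS_j Y}X + \TS_j^\t \al{X}{Y} = 0$, whence $\n_X Y = \TR_j \n_X Y \in V_j$. De Rham's theorem then yields, locally, $M \cong M_1 \x \cdots \x M_n$ with $TM_j = V_j$; the complete and simply connected case yields the global version via the global de Rham decomposition.

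To show $f$ splits as a product, iterate Moore's Lemma, which demands $\aF{X}{Y} = 0$ for $X \in V_j$, $Y \in V_k$, $j \ne k$. By Lemma \ref{lem: aF} this reduces to $\af{X}{Y} = 0$ and $\interno{\BR_i X}{Y} = 0$ for every $i$. For the first, applying \eqref{eq: derivada S} summed over $i \in I_k$ to the identity $Y = \TR_k Y$ produces $\af{X}{Y} = \TT_k \af{X}{Y}$, and symmetrically $\af{X}{Y} = \TT_j \af{X}{Y}$; since $\TT_j\TT_k = 0$ with both orthogonal projections, $\af{X}{Y} = 0$. For the second, a short case analysis shows $\TR_k\BR_i\TR_j = 0$: if $i \notin I_k$ then $\TR_k\BR_i = -\TS_k^\t \BS_i = 0$, while if $i \in I_k$ then $\TR_k\BR_i = \BR_i$ and $\BR_i\TR_j = -\BS_i^\t \TS_j = 0$. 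Moore's Lemma then supplies the decomposition $F = (v_0, F_1, \ldots, F_n)$; the identity $\Pi_j f_* X = f_*\TR_j X + \TS_j X = f_* X$ for $X \in V_j$ (where $\Pi_j$ is the orthogonal projection onto $\prod_{i \in I_j}\R^{N_i}$) confirms $F_j(M_j) \subset \prod_{i \in I_j}\R^{N_i}$, and $f_j := \Pi_j(v_0) + F_j$ is the desired immersion into $\prod_{i \in I_j}\o{i}$. The converse (I) $\Rightarrow$ (II) is a direct computation: for $X \in TM_j$, $f_*X \in \prod_{i \in I_j}\R^{N_i}$ so $\pi_i f_* X = 0$ for $i \notin I_j$, giving $\BR_i X = 0 = \BS_i X$ there, whence $\TS_k = 0$ for every $k$ and $\ker\TR_k = \bigoplus_{j \ne k}TM_j$ has dimension $m - m_k \in (0,m)$.

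The main obstacle, relative to the two-factor Proposition, is the pairwise orthogonality $\TR_j\TR_k = 0$: for $n = 2$ it is automatic from $\TR_1 + \TR_2 = \id$ combined with idempotency, but for $n \ge 3$ it must be extracted from the structural identity $\BR_i\BR_{i'} = -\BS_i^\t\BS_{i'}$ of \eqref{eq: RST2} together with the vanishing of both $\TS_j$ and $\TS_k$. Once this is in hand, the remainder is the two-factor argument transplanted to $n$ factors, with Moore's Lemma applied iteratively.
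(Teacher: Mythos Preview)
Your proof is correct. The paper states this Corollary without proof, the implicit argument being induction on $n$ via repeated application of the immediately preceding Proposition: split off $M_1$ using $I = I_1$, then apply the Proposition again to the remaining factor $f' \colon M' \to \prod_{i \notin I_1}\o{i}$ with $I = I_2$, and so on. That route is shorter to write but requires verifying that the hypotheses of (II) descend to $f'$ at each stage (i.e.\ that $\BS_i^{f'}$ and $\BR_i^{f'}$ agree with the restrictions of $\BS_i^f$, $\BR_i^f$ to $TM'$, and that the dimension bounds survive).

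Your direct approach bypasses that bookkeeping by establishing the simultaneous orthogonal decomposition $TM = \bigoplus_j \TR_j(TM)$ in one shot. The key ingredient you correctly isolate is the pairwise orthogonality $\TR_j\TR_k = 0$ and $\TT_j\TT_k = 0$ for $j \ne k$, obtained from \eqref{eq: RST2} and the vanishing of all $\TS_j$; this step has no analogue in the two-factor Proposition, where orthogonality is automatic from $\TR_1 + \TR_2 = \id$. Your verification that $\TR_k\BR_i\TR_j = 0$ for each individual $i$ (needed for the $\nu_i$-components in Lemma~\ref{lem: aF}) is also a point that a naive iteration would hide. The trade-off is that your argument re-derives much of the Proposition's proof rather than invoking it; the paper's intended iteration is more economical once one trusts that the tensors restrict correctly, while your version is more self-contained and makes the $n$-factor structure explicit.
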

		\subsection{Weighted sums}

Let $k_1, \cdots, k_\ell \in \R$ and $a_1, \cdots, a_\ell\in \R^*$ be such that $\sum\limits_{i=1}^\ell a_i^2 = 1$. For each $i$, let $\tilde{k}_i := a_i^2 k_i$ and $f_i \colon M^m \to \O_{\tilde{k}_i}^{n_i}$ be a isometric immersion. We can define $f \colon M^m \to \hO$ by $f(x) := \left(a_1 f_1(x), \cdots, a_\ell f_\ell(x)\right)$. Thus $f$ is a isometric immersion, because
\[\interno{f_* X}{f_* Y} = \sum_{i=1}^\ell a_i^2 \interno{{f_i}_*X}{{f_i}_*Y} = \interno{X}{Y}.\]
This immersion $f$ is called \textbf{weighted sum} of the immersions $f_1$, $\cdots$, $f_\ell$ and the numbers $a_1$, $ \cdots$, $a_\ell$ are called the \textbf{weights} of $f$.

Now, lets consider $F = \imath \circ f$. We know that the fields $\nu_i = -k_i(\pi_i \circ F)$ are normal to $F$ and that $\RN = F_*T M \op \imath_*T^\perp M \op \spa\{\nu_1, \cdots, \nu_\ell\}$, where $T_x^\perp M \subset T_{f(x)}\hO$ is the normal space of $f$ at $x$. But since $\sum\limits_{i=1}^\ell a_i^2 = 1$ and $F_*T_xM = \set{\left(a_1 {f_1}_* X, \cdots, a_\ell{f_\ell}_*X \right)}{X \in T_x M}$, then $\left( a_1{f_1}_* X, \cdots, \frac{a_i^2-1}{a_i}{f_i}_* X, \cdots, a_\ell {f_\ell}_* X\right) \in \imath_* T_x^\perp M \perp F_* T_xM \op \spa\{\nu_1, \cdots, \nu_1\ell\}$.

Now we will study the tensors $\alpha$, $\BR_i$, $\BS_i$ and $\BT_i$ of $f$.

\begin{lem}\label{lem:RS weighted}
	If $f$ is a weighted sum and $a_1, \cdots, a_\ell$ are its weights, then $\BR_i = a_i^2 \id$ and $\BS_i X = -a_i^2\left(a_1{f_1}_*X, \cdots, \frac{a_i^2-1}{a_i}{f_i}_*X, \cdots, a_\ell{f_\ell}_*X\right)$.
\end{lem}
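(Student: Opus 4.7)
The plan is to exploit the decomposition $\pi_i f_* X = \BL_i X = f_* \BR_i X + \BS_i X$ from equation \eqref{eq: K L}, where $f_* \BR_i X$ is the tangential part and $\BS_i X$ is the normal part (with respect to $f_*TM$ inside $T\hO$). Since $f(x) = (a_1 f_1(x), \ldots, a_\ell f_\ell(x))$, we get $f_* X = (a_1 {f_1}_*X, \ldots, a_\ell {f_\ell}_*X)$, and hence $\pi_i f_* X = (0, \ldots, 0, a_i {f_i}_*X, 0, \ldots, 0)$. Everything will follow from splitting this vector into its components tangent and normal to $f_*TM$.

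To identify $\BR_i$, I would take the inner product with an arbitrary $f_*Y$ and use that $\BS_iX \perp f_*TM$:
\[\interno{\BR_i X}{Y} = \interno{f_*\BR_i X}{f_*Y} = \interno{\pi_i f_* X}{f_* Y} = a_i^2 \interno{{f_i}_* X}{{f_i}_* Y} = a_i^2 \interno{X}{Y},\]
where the last equality uses that $f_i$ is itself an isometric immersion. This gives $\BR_i = a_i^2 \id$ immediately.

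For $\BS_i$, I would simply subtract: $\BS_i X = \pi_i f_* X - f_*\BR_i X = \pi_i f_*X - a_i^2 f_*X$. Carrying out this subtraction coordinate by coordinate, the $j$-th entry for $j \ne i$ becomes $-a_i^2 a_j {f_j}_*X$, while the $i$-th entry becomes $a_i {f_i}_*X - a_i^3 {f_i}_*X = a_i(1 - a_i^2){f_i}_*X$. Factoring $-a_i^2$ out of every entry (and noting that $a_i(1-a_i^2) = -a_i^2 \cdot \frac{a_i^2-1}{a_i}$) yields the stated formula for $\BS_i X$.

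There is no real obstacle here: the only sanity check worth mentioning is that the vector produced this way is genuinely normal to $f_*TM$, which follows at once by taking its inner product with $f_*Y$ and observing the two summands cancel, consistent with the derivation above. The proof is essentially a direct computation once the correct tangent/normal split is set up.
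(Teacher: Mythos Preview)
Your proof is correct and follows essentially the same route as the paper: both compute $\pi_i f_* X = (0,\ldots,a_i{f_i}_*X,\ldots,0)$ and then split it as $f_*\BR_i X + \BS_i X$. The only cosmetic difference is that you identify $\BR_i = a_i^2\id$ via the inner product $\interno{\pi_i f_*X}{f_*Y}=a_i^2\interno{X}{Y}$ (using that $f_i$ is isometric), whereas the paper reaches the same decomposition by an algebraic rearrangement using $\sum_j a_j^2 = 1$; after that, both obtain $\BS_i X$ by subtraction.
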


\begin{proof}
	\begin{align*}
		& \pi_if_*X = \left(0, \cdots, a_i{f_i}_*X, \cdots, 0\right) = \left(0, \cdots, \left(\sum_{j=1}^\ell a_j^2 \right)a_i{f_i}_*X, \cdots, 0 \right) =  \\
		& = a_i^2 \left( 0, \cdots, a_i {f_i}_*X, \cdots, 0 \right) + \left(0, \cdots, a_i \left(\sum_{\substack{j=1\\ j\ne i}}^\ell a_j^2 \right){f_i}_*X, \cdots 0\right) = \\
		& = a_i^2 \left( a_1{f_1}_* X, \cdots, a_i {f_i}_*X, \cdots, a_\ell {f_\ell}_* X \right) - \\
		& \quad - a_i^2 \left( a_1{f_1}_* X, \cdots, 0, \cdots, a_\ell {f_\ell}_* X \right) + a_i\left(0, \cdots, \left(1-a_i^2\right){f_i}_*X, \cdots, 0 \right) = \\
		& = a_i^2 f_* X - a_i^2\left(a_1{f_1}_*X, \cdots, \frac{a_i^2-1}{a_i}{f_i}_*X, \cdots, a_\ell{f_\ell}_*X\right). \qedhere
	\end{align*}
\end{proof}

\begin{obs}
	From Lemma \ref{lem:RS weighted}
	\[\BS_i\left(T_x M \right) = \set{a_i^2\left(a_1{f_1}_*X, \cdots, \frac{a_i^2-1}{a_i}{f_i}_*X, \cdots, a_\ell{f_\ell}_*X\right)}{X \in T_x M},\]
	and  $\dim \BS_i\left(T_x M \right) = m$. Besides, $\BT_i \BS_i = \BS_i (\id - \BR_i) = \left(1-a_i^2 \right) \BS_i = \sum\limits_{\substack{j=1\\ j\ne i}}^\ell a_j^2 \BS_i$.
\end{obs}

\begin{lem}\label{lem: segunda forma}
	If $f$ is a weighted sum then
	\[\af{X}{Y} = \big(a_1\alpha_{f_1}(X,Y), \cdots, a_\ell\alpha_{f_\ell}(X,Y) \big).\]
\end{lem}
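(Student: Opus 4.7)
The plan is to compute $\aF{X}{Y}$ directly in the ambient space $\RN$, working factor by factor, and then to recover $\af{X}{Y}$ from the decomposition
$\aF{X}{Y} = \imath_* \af{X}{Y} + \sum_{i=1}^\ell \interno{\BR_i X}{Y} \nu_i$
supplied by Lemma~\ref{lem: aF}, combined with the identity $\BR_i = a_i^2 \id$ from Lemma~\ref{lem:RS weighted}.

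Since $F = \imath \circ f$ is the coordinatewise map $x \mapsto (a_1 f_1(x), \ldots, a_\ell f_\ell(x))$ and $\ntil$ on $\RN$ is the product of the flat connections on the $\R^{N_i}$, I would apply the Gauss formula to each composition $\imath_i \circ f_i \colon M \to \O_{\tilde{k}_i}^{n_i} \hookrightarrow \R^{N_i}$. Its ambient second fundamental form splits into the piece $\alpha_{f_i}(X,Y)$ tangent to $\O_{\tilde{k}_i}^{n_i}$ and a piece normal to it; the same argument as in Lemma~\ref{lem: ai}, applied to the single-factor inclusion, identifies the latter as $-\tilde{k}_i \interno{X}{Y} f_i(x)$ (vanishing when $\tilde{k}_i = 0$). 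Assembling the $\ell$ coordinates yields
\begin{align*}
\aF{X}{Y} &= \bigl(a_1 \alpha_{f_1}(X,Y), \ldots, a_\ell \alpha_{f_\ell}(X,Y)\bigr) \\
&\quad - \sum_{i=1}^\ell a_i \tilde{k}_i \interno{X}{Y} \bigl(0, \ldots, f_i(x), \ldots, 0\bigr).
\end{align*}

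Using $\tilde{k}_i = a_i^2 k_i$ together with $\nu_i = -k_i\bigl(0, \ldots, a_i f_i(x), \ldots, 0\bigr)$, the $i$-th summand in the second line becomes $a_i^2 \interno{X}{Y} \nu_i$, which by Lemma~\ref{lem:RS weighted} equals $\interno{\BR_i X}{Y} \nu_i$. Matching this against the $\nu$-part of Lemma~\ref{lem: aF} forces $\imath_* \af{X}{Y} = \bigl(a_1 \alpha_{f_1}(X,Y), \ldots, a_\ell \alpha_{f_\ell}(X,Y)\bigr)$; this is internally consistent because each entry $a_i \alpha_{f_i}(X,Y)$ is orthogonal to $f_i(x)$, hence to $a_i f_i(x)$, so the tuple already lies in $T_{f(x)}\hO$. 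The claim follows. No real obstacle arises beyond bookkeeping with the rescalings $\tilde{k}_i = a_i^2 k_i$ and the resulting alignment of the $a_i$'s with the normals $\nu_i$.
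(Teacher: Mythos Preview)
Your proposal is correct and follows essentially the same route as the paper: both compute $\ntil_X F_*Y$ (equivalently $\aF{X}{Y}$) factor by factor via the Gauss formula for each $\tilde\imath_i\circ f_i$, identify the radial piece $-a_i\tilde{k}_i\interno{X}{Y}f_i$ with $a_i^2\interno{X}{Y}\nu_i$, and then match against the decomposition of Lemma~\ref{lem: aF} using $\BR_i=a_i^2\id$ from Lemma~\ref{lem:RS weighted}. Your write-up is simply a more compressed version of the paper's computation, with the added (and useful) explicit check that the tuple $(a_1\alpha_{f_1}(X,Y),\ldots,a_\ell\alpha_{f_\ell}(X,Y))$ actually lies in $T_{f(x)}\hO$.
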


\begin{proof}
	For each $i$, let $\tilde{\imath}_i \colon \O_{\tilde{k}_i}^{n_i} \to \R^{N_i}$ be the canonical inclusion and let $\mathcal{J}_i \colon \R^{N_i} \to \RN$ be totally geodesic immersion given by $\mathcal{J}_i(x) := (0, \cdots, x, \cdots, 0)$, with $x$ in the $i$th position. Thus, each $\tilde{\imath}_i$ is the restriction of the identity $I_i \colon \R^{N_i} \to \R^{N_i}$ and $T_x \O_{\tilde{k}_i}^{n_i} = T_{a_i x} \o{i}$. So, if $X \in T_x \O_{\tilde{k}_i}^{n_i}$, then $\left.\tilde{\imath}_i\right._* X = {I_i}_* X = {\imath_i}_* X$, where $\imath_i \colon \o{i} \to \R^{N_i}$ is the canonical inclusion.

	Besides, $\imath = (\imath_1 \x \cdots \x \imath_\ell)$ and
	\begin{multline*}
		F_* X = \imath_* \big(a_1{f_1}_* X, \cdots, a_\ell {f_\ell}_*X \big) = \big({\imath_1}_* a_1 {f_1}_* X, \cdots, {\imath_\ell}_* a_\ell {f_2}_*X \big) = \\
		= \big( a_1 \left. \tilde{\imath}_1 \right._* {f_1}_* X, \cdots, a_\ell \left.\tilde{\imath}_2\right._* {f_2}_* X \big).
	\end{multline*}

	Let $\ntil^i$ be the Levi-Civita connexion in $\R^{N_i}$. Hence
	\begin{align*}
		& \ntil_X F_* Y = \ntil_X \big( a_1 \left. \tilde{\imath}_1 \right._* {f_1}_* Y, \cdots, a_\ell \left.\tilde{\imath}_2\right._* {f_\ell}_* Y \big) = \\
		&= \ntil_X \left[ a_1 {\mathcal{J}_1}_* \left.\tilde{\imath}_1\right._* {f_1}_* Y + \cdots + a_\ell {\mathcal{J}_\ell}_* \left.\tilde{\imath}_2\right._* {f_\ell}_* Y \right] = \\
		& = a_1 {\mathcal{J}_1}_* \ntil^1_X \left( \tilde{\imath}_1 \circ f_1\right)_* Y + \cdots + a_\ell {\mathcal{J}_\ell}_* \ntil^\ell_X \left( \tilde{\imath}_2 \circ f_\ell\right)_* Y = \\
		& = a_1 {\mathcal{J}_1}_* \left.\tilde{\imath}_1\right._* \left[ {f_1}_* \n_X Y  + \alpha_{f_1}(X,Y)  \right] + a_1 {\mathcal{J}_1}_* \alpha_{\tilde{\imath}_1}\left({f_1}_*X, {f_1}_* Y\right) + \cdots + \\
			& \quad + a_\ell {\mathcal{J}_\ell}_* \left.\tilde{\imath}_\ell\right._* \left[ {f_\ell}_* \n_X Y  + \alpha_{f_\ell}(X,Y)  \right] + a_\ell {\mathcal{J}_\ell}_* \alpha_{\tilde{\imath}_\ell}\left({f_\ell}_*X, {f_\ell}_* Y\right)  = \\
		& = \big( a_1 \left.\tilde{\imath}_1\right._* \left[{f_1}_* \n_X Y + \alpha_{f_1}(X, Y) \right], 0, \cdots, 0 \big) - a_1 \interno{X}{Y} \left( \tilde{k}_1 f_1, 0, \cdots, 0 \right) + \\
			& \quad + \cdots + \\
			& \quad + \big( 0, \cdots, 0, a_\ell \left.\tilde{\imath}_\ell\right._* \left[{f_\ell}_* \n_X Y  + \alpha_{f_\ell}(X,Y) \right]\big) - a_\ell \interno{X}{Y} \left( 0, \cdots, 0, \tilde{k}_\ell f_\ell \right)= \\
		& = \big( {\imath_1}_* a_1 {f_1}_* \n_X Y , \cdots, {\imath_\ell}_* a_\ell {f_\ell}_* \n_X Y \big) + \\
			& \quad + \big( {\imath_1}_* a_1 \alpha_{f_1}(X,Y), \cdots, {\imath_\ell}_* a_\ell \alpha_{f_\ell}(X,Y) \big) - \interno{X}{Y} \left( a_1 \tilde{k}_1 f_1, \cdots, a_\ell \tilde{k}_\ell f_\ell  \right).
	\end{align*}

	But, $a_i \tilde{k_i} = a_i \cdot a_i^2 k_i$ and $F = (a_1f_1, \cdots, a_\ell f_\ell)$, thus
	\[\left( a_1 \tilde{k}_1 f_1, \cdots, a_\ell \tilde{k}_\ell f_\ell \right) = \left(a_1^2 \cdot a_1 k_1 f_1, \cdots, a_\ell^2 \cdot a_\ell k_\ell f_\ell\right) = \sum_{i=1}^\ell a_i^2 k_i(\pi_i \circ F).\]
	Therefore
	\begin{multline*}
		\ntil_X F_* Y = F_* \n_X Y + \imath_* \big( a_1 \alpha_{f_1}(X,Y), \cdots, a_\ell \alpha_{f_\ell}(X,Y) \big) + \interno{X}{Y} \sum_{i=1}^\ell a_i^2 \nu_i.
	\end{multline*}

	On the oder side, $\ntil_X F_* Y = F_* \n_X Y + \imath_* \af{X}{Y} + \sum\limits_{i=1}^\ell \interno{\BR_i X}{Y} \nu_i$. Since $\BR_i X = a_i^2 X$, it follows that $\af{X}{Y} = \big(a_1\alpha_{f_1}(X,Y), \cdots, a_\ell\alpha_{f_\ell}(X,Y) \big)$.
\end{proof}

\begin{cor}\label{cor: f_i umbílica}
	Let $f = (a_1f_1, \cdots, a_\ell f_\ell)$ be a weighted sum of isometric immersions. The following claims are equivalent:
	\begin{enum}
		\item $f$ is umbilical.
		\item $F = \imath \circ f$ is umbilical.
		\item Each $f_i$ é umbilical.
	\end{enum}
\end{cor}

\begin{proof}
	It follows directly from Lemma \ref{lem:RS weighted}.
\end{proof}

The follow proposition characterizes weighted sums.

\begin{prop}\label{prop: weighted}
	Let $f \colon M^m \to \hO$ be an isometric immersion, then $f$ is a weighted sum of isometric immersions with positive if, and only if, there are $a_1, \cdots, a_\ell \in (0,1)$ such that $\BR_i = a_i^2 \id$ and $\sum\limits_{i=1}^\ell a_i^2 = 1$.
\end{prop}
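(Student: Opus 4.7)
The implication $(\Rightarrow)$ would follow immediately from Lemma \ref{lem:RS weighted} together with the condition $\sum a_i^2 = 1$ built into the definition of a weighted sum, so the substantive task is the converse.

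For $(\Leftarrow)$, I would assume $\BR_i = a_i^2 \id$ with $a_1,\ldots,a_\ell \in (0,1)$ and $\sum_{i=1}^\ell a_i^2 = 1$, and decompose $f = (f^1, \ldots, f^\ell)$ where $f^i \colon M \to \o{i} \subset \R^{N_i}$ is the $i$-th component of $f$. Setting $\tilde{k}_i := a_i^2 k_i$, the candidate component immersions are $f_i := (1/a_i)\, f^i$. The plan is to check that each $f_i$ is an isometric immersion into $\O_{\tilde k_i}^{n_i}$ and then verify by inspection that $f = (a_1 f_1, \ldots, a_\ell f_\ell)$, which exhibits $f$ as a weighted sum with the prescribed positive weights.

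Checking that $f_i$ has image in the right target is routine: when $k_i = 0$ everything is already Euclidean; when $k_i \ne 0$, from $\interno{f^i}{f^i} = 1/k_i$ one gets $\interno{f_i}{f_i} = 1/(a_i^2 k_i) = 1/\tilde{k}_i$, and since $a_i > 0$ the sign of $\tilde{k}_i$ matches the sign of $k_i$, preserving the type of space form and, in the hyperbolic case, the branch $x_0 > 0$. The essential step is to show $f_i$ is isometric. Since $\pi_i f_* X = (0, \ldots, (f^i)_* X, \ldots, 0)$, using $\BR_i = \BL_i^\t \BL_i$ with $\BL_i = \pi_i \circ f_*$, one computes
\[ \interno{(f^i)_* X}{(f^i)_* Y} = \interno{\pi_i f_* X}{\pi_i f_* Y} = \interno{\BL_i X}{\BL_i Y} = \interno{\BR_i X}{Y} = a_i^2 \interno{X}{Y}, \]
so dividing by $a_i^2$ yields $\interno{(f_i)_* X}{(f_i)_* Y} = \interno{X}{Y}$, proving that $f_i$ is an isometric immersion.

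Finally, by construction $(a_1 f_1(x), \ldots, a_\ell f_\ell(x)) = (f^1(x), \ldots, f^\ell(x)) = f(x)$, which closes the argument. I expect the only genuine subtlety to be sign bookkeeping when some $k_i$ is negative: one must confirm that positive scaling commutes cleanly with the indefinite form on $\L^{N_i}$ and with the half-space condition defining $\h{i}$. This is the only place where the hypothesis $a_i > 0$ (not merely $a_i \ne 0$) is essential, since allowing $a_i < 0$ could flip the $x_0 > 0$ condition and land $f_i$ in the wrong component.
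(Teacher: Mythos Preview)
Your proof is correct and follows essentially the same route as the paper: both directions use Lemma~\ref{lem:RS weighted} for $(\Rightarrow)$, and for $(\Leftarrow)$ both define $f_i := a_i^{-1}(\pi_i\circ f)$, use $\BR_i=\BL_i^\t\BL_i$ to show this is an isometric immersion, and verify the image lands in $\O_{\tilde k_i}^{n_i}$. Your explicit remark about preserving the branch $x_0>0$ in the hyperbolic case is a detail the paper leaves implicit but is indeed where $a_i>0$ (rather than $a_i\ne 0$) matters.
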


\begin{proof}
	If $f \colon M \to \hO$ is a wighted sum with weights $a_1, \cdots, a_\ell \in \R_+^*$, then $\sum\limits_{i=1}^\ell a_i^2 = 1$, $a_i \in (0,1)$ and we already know that $\BR_i = a_i^2 \id$, for each $i \in \{1, \cdots, \ell\}$.

	Lets suppose that there are $a_1, \cdots, a_\ell \in (0,1)$ such that $\BR_i = a_i^2 \id$ and $\sum\limits_{i=1}^\ell a_i^2 =1$. Thus $a_i^2 \interno{X}{Y} = \interno{\BR_i X}{Y} = \interno{\BL_i^\t \BL_i X}{Y} = \interno{\pi_i f_* X}{\pi_i f_* Y}$, as a consequence each $\pi_i\circ f$ is a similarity with ratio $a_i$ and $f_i := a_i^{-1}(\pi_i \circ f)$ is an isometric immersion in $\R^{N_i}$.

	Let $\tilde{k}_i := a_i^2 k_i$. If $k_i = 0$ then $\tilde{k}_i = 0$, $\O_{\tilde{k}_i}^{n_i} = \E^{N_i}$ and $f_i(x) \in \O_{\tilde{k}_i}^{n_i}$. If $k_i \ne 0$, then $\|f_i(x)\|^2 = \frac{1}{a_i^2 k_i}$, thus $f_i(x) \in \O_{\tilde{k}_i}^{n_i} \subset \R^{N_i}$.

	But $f = (a_1 f_1, \cdots, a_\ell f_\ell)$, therefore $f$ is a weighted sum.
\end{proof}
		\subsection{A particular weighted sum}\label{seção: exemplo}

Let $k_1, \cdots, k_\ell$ be real numbers such that $k_ik_j > 0$, for every $i, j \in \{1, \cdots, \ell\}$. Lets denote $\e := \frac{k_1}{|k_1|}$, $M_i := \prod\limits_{ \substack{j=1\\ j\ne i} }^\ell k_j$, $\l := \sum\limits_{i=1}^\ell M_i$, $a_i := \left(\frac{M_i}{\l}\right)^{\frac{1}{2}}$ and $\tilde{k}_i := a_i^2 k_i$.
Thus, $\e = \frac{k_i}{|k_i|}$, $\sum\limits_{i=1}^\ell a_i^2 = 1$ and $\tilde{k}_i = a_i^2 k_i = \frac{\prod\limits_{\substack{j=1\\ j\ne i}}^\ell k_j}{\l}k_i = \frac{\prod\limits_{j=1}^\ell k_j}{\l} = \tilde k_1$.

Lets denote $k := \tilde{k}_1$ and let $T_1, \cdots, T_\ell \in \mathrm{O}_{\tau(k)}(n+1)$ be such that $T_i \left(\o{i}\right) = \o{i}$. We know that the restrictions $\left.T_i\right|_{\o{}} \colon \o{} \to \o{}$ are isometries, hence the function 
\[\begin{matrix}
	g : & \o{} & \longrightarrow & \XO \\
	& x & \longmapsto & \big(a_1 T_1(x), \cdots, a_\ell T_\ell(x) \big),
\end{matrix}\]
is a weighted sum of $T_1$, $\cdots$, $T_\ell$ with weights $a_1$, $\cdots$, $a_\ell$.

By Lemma \ref{lem: segunda forma}, $g$ is a totally geodesic immersion. Besides, the following equations hold
\begin{align*}
	& \BR_i = a_i^2 \id, \quad \BS_i X =  -a_i^2\left(a_1T_1 X, \cdots, \frac{a_i^2-1}{a_i}T_i X, \cdots, a_\ell T_\ell X\right),\\
	& k_i \BR_i = k_i a_i^2 \id = k \id, \quad \BT_i\BS_i = \left(1-a_i^2 \right) \BS_i \quad \text{and} \\
	& \BS_i\left(T_x \o{} \right) = \set{a_i^2\left(a_1T_1 X, \cdots, \frac{a_i^2-1}{a_i}T_i X, \cdots, a_\ell T_\ell X\right)}{X \in T_x \o{}}.
\end{align*}



The weighted sum $g$ plays an important hole at the theory of isometric immersions in $\hO$. This hole is a kind of reduction of codimension theorem (Theorem \ref{teo: Phi=0}). But first we need some results.

\begin{lem}\label{lem: V}
	Let $k_1, \cdots, k_\ell \in \R^*$ be such that $k_ik_j > 0$, for all $i,j \in \{1, \cdots, \ell\}$, and lets denote $M_i := \prod\limits_{\substack{j=1\\ j\ne i}}^\ell k_j$, $\l := \sum\limits_{i=1}^\ell M_i$, $a_i := \left(\frac{M_i}{\l}\right)^{\frac{1}{2}}$ and $\quad k := \tilde{k}_i := a_i^2 k_i$. Let also $V^{n+1} \subset \prod\limits_{i=1}^\ell \R_{\tau(k)}^{n+1}$ be a vector subspace isomorphic to $\R^{n+1}_{\tau(k)}$. With the assumptions above, the following claims are equivalent:
	\begin{enum}
		\item $V \cap \S_k^{n\ell + \ell-1} \subset \prod\limits_{i=1}^\ell \S_{k_i}^n \subset \prod\limits_{i=1}^\ell \R_{\tau(k)}^{n+1}$.
		\item $\left.\pi_i\right|_V$ is a similarity with ratio $a_i$.
		\item There are linear isometries $T_1, \cdots, T_\ell \in \mathrm{O}_{\tau(k)}(n+1)$ such that
		\[V = \set{\left(a_1 T_1(x), \cdots, a_\ell T_\ell(x)\right)}{x \in \R_{\tau(k)}^{n+1}}.\]
	\end{enum}
\end{lem}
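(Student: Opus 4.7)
The plan is to prove the chain of implications $(\textsl{III}) \Rightarrow (\textsl{II}) \Rightarrow (\textsl{I})$ and $(\textsl{I}) \Rightarrow (\textsl{II}) \Rightarrow (\textsl{III})$, using throughout the built-in identities $\sum_{i=1}^\ell a_i^2 = 1$ and $\tilde k_i = a_i^2 k_i = k$ from the setup. Only the passage $(\textsl{II}) \Rightarrow (\textsl{III})$ carries any real structural content; the rest reduces to computations and scaling arguments about quadratic forms.

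For $(\textsl{III}) \Rightarrow (\textsl{II})$: given $v = (a_1 T_1 x, \ldots, a_\ell T_\ell x) \in V$ with $T_i \in \mathrm{O}_{\tau(k)}(n+1)$, one directly computes $\|\pi_i v\|^2 = a_i^2 \|x\|^2$ and $\|v\|^2 = \sum_j a_j^2 \|x\|^2 = \|x\|^2$, so $\|\pi_i v\|^2 = a_i^2 \|v\|^2$; polarization upgrades this to a similarity of (possibly indefinite) inner products. For $(\textsl{II}) \Rightarrow (\textsl{I})$: if $v \in V$ satisfies $\|v\|^2 = 1/k$, then the similarity gives $\|\pi_i v\|^2 = a_i^2/k = 1/k_i$, placing the $i$-th component of $v$ in $\S_{k_i}^n$.

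For $(\textsl{I}) \Rightarrow (\textsl{II})$, fix $i$ and introduce the quadratic form $q \colon V \to \R$ defined by $q(v) := \|\pi_i v\|^2 - a_i^2 \|v\|^2$. Hypothesis $(\textsl{I})$ together with $a_i^2/k = 1/k_i$ says that $q$ vanishes on $V \cap \S_k^{n\ell+\ell-1}$. In the Riemannian case $\tau(k) = 0$ this intersection is an $n$-sphere in $V$, every nonzero vector of $V$ is a positive scalar multiple of one of its points, and the homogeneity $q(\lambda v) = \lambda^2 q(v)$ forces $q \equiv 0$. In the Lorentzian case $\tau(k) = 1$ the intersection is a two-sheeted hyperboloid of timelike vectors of $V$; its scalar multiples cover the open cone of timelike vectors, and since $q$ is a polynomial vanishing on a nonempty open subset of a connected vector space it must vanish identically. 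The main obstacle of the whole proof sits here: naive rescaling does not reach all of $V$ when $\tau(k) = 1$, so one has to invoke the polynomial-identity principle.

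For $(\textsl{II}) \Rightarrow (\textsl{III})$: since $a_i \ne 0$, the map $\pi_i|_V \colon V \to \R_{\tau(k)}^{n+1}$ is an injective similarity between two spaces of the same dimension $n+1$, hence a bijective similarity. The rescaling $S_i := a_i^{-1} \pi_i|_V$ is then a linear isometry $V \to \R_{\tau(k)}^{n+1}$. Fix any linear isometry $\Phi \colon \R_{\tau(k)}^{n+1} \to V$, which exists by the assumption $V \cong \R_{\tau(k)}^{n+1}$, and define $T_i := S_i \circ \Phi \in \mathrm{O}_{\tau(k)}(n+1)$. For every $x \in \R_{\tau(k)}^{n+1}$ one then has $\pi_i \Phi(x) = a_i T_i x$, so $\Phi(x) = (a_1 T_1 x, \ldots, a_\ell T_\ell x)$; letting $x$ range over $\R_{\tau(k)}^{n+1}$ exhausts $V$ and yields the claimed parametrization.
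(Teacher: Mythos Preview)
Your argument is correct and, for the implications $(\textsl{III})\Rightarrow(\textsl{II})$, $(\textsl{II})\Rightarrow(\textsl{I})$, and $(\textsl{II})\Rightarrow(\textsl{III})$, it matches the paper's proof up to phrasing (the paper fixes an orthonormal basis of $V$ where you fix an abstract isometry $\Phi$, but this is the same construction).

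The one place where you genuinely diverge is $(\textsl{I})\Rightarrow(\textsl{II})$ in the Lorentzian case $\tau(k)=1$. The paper picks an orthonormal basis $\{e_0,\ldots,e_n\}$ of $V$ with $e_0$ timelike, and then plugs in specific timelike combinations $\alpha e_0+\beta e_j$ and $\alpha e_0+\beta e_{j_1}+\gamma e_{j_2}$ with carefully chosen scalars to extract, one by one, the relations $\|\pi_i e_j\|^2=a_i^2$ and $\langle\pi_i e_{j_1},\pi_i e_{j_2}\rangle=0$. Your route is shorter and more conceptual: you observe that the quadratic form $q(v)=\|\pi_i v\|^2-a_i^2\|v\|^2$ is a polynomial vanishing on the nonempty open cone of timelike vectors, hence vanishes identically on $V$, and polarization finishes. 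Both arguments yield the same conclusion; yours avoids the ad hoc choices of $\alpha,\beta,\gamma$ at the cost of invoking the (elementary) polynomial-identity principle, while the paper's version is entirely self-contained and hands-on. One small remark: for the injectivity of $\pi_i|_V$ in $(\textsl{II})\Rightarrow(\textsl{III})$ you should appeal to the full bilinear similarity identity and non-degeneracy of the inner product on $V$, since in the Lorentzian case $\|\pi_i v\|^2=0$ alone does not force $v=0$; you have this implicitly via polarization, but it is worth making explicit.
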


\begin{proof}
	Since $k_ik_j > 0$, then $k_1, \cdots, k_\ell$ are all positive or all negative. Thus $k$ and $k_i$ have the same sign.

	\vspace{1ex}\noindent \underline{\textsl{(I)} $\Rightarrow$ \textsl{(II)}:} We have two cases: $k >0$ or $k < 0$.
	
	\noindent \vspace{1ex} \textbf{First case: $k > 0$.}
	
	In this case $\R_{\ell\tau(k)}^{\ell n+\ell} = \R^{\ell n+\ell}$ is an euclidean space. Thus,
	\begin{align*}
	& V \cap \S_k^{n\ell + \ell-1} \subset \prod_{i=1}^\ell \S_{k_i}^n \sss \frac{x}{\hksqrt{k} \|x\|} \in \prod_{i=1}^\ell \S_{k_i}^n, \ \forall x \in V\setminus\{0\} \sss \\
	& \sss \frac{\|\pi_i x\|^2}{k \|x\|^2} = \frac{1}{k_i}, \ \forall x \in V\setminus\{0\} \sss \ \|\pi_i x\|^2 = \frac{k \|x\|^2}{k_i} = a_i^2 \|x\|^2, \ \forall x \in V.
	\end{align*}
	
	Therefore \textsl{(I) $\Rightarrow$ (II)}.
	
	\vspace{1ex} \noindent \textbf{Second case: $k < 0$.}
	
	In this case $\R_{\ell \tau(k)}^{\ell n+\ell} = \R_\ell^{\ell n+\ell}$ and
	$V \cap \S_k^{n\ell + \ell -1} = \set{\frac{x}{\hksqrt{|k|} \|x\|}}{x \in V \ \text{and} \ \|x\|^2 < 0}$.
	Hence,
	\begin{align*}
	& V \cap \S_k^{n\ell + \ell -1} \subset \prod_{i=1}^\ell \S_{k_i}^n \sss \frac{x}{\hksqrt{|k|} \|x\|} \in \prod_{i=1}^\ell \S_{k_i}^n, \ \forall x \in V \ \text{with} \ \|x\|^2 < 0 \sss \\
	& \sss \frac{\|\pi_i x\|^2}{k \|x\|^2} = \frac{1}{k_i}, \ \forall x \in V \ \text{with} \ \|x\|^2 < 0 \sss \\
	& \sss \|\pi_i x\|^2 = \frac{k \|x\|^2}{k_i} = a_i^2\|x\|^2, \ \forall x \in V \ \text{with} \ \|x\|^2 < 0.
	\end{align*}
	
	
	Lets suppose that $V \cap \S_k^{n\ell + \ell -1} \subset \prod\limits_{i=1}^\ell \S_{k_i}^n$. We will show that each $\left.\pi_i\right|_V$ is a similarity of ratio $a_i$.
	
	Let $\{e_0, \cdots, e_n\}$ be a orthonormal base of $V$ with $e_0$ timelike. Then $\|\pi_ie_0\|^2 = -a_i^2$. Let $\alpha, \beta \in \R$ be such that $-\alpha^2 + \beta^2 < 0$, and let $j \in \{1, \cdots, n\}$. Thus $\alpha e_0 + \beta e_j$ is timelike and, by the equivalences above, $\left\|\pi_i\left(\alpha e_0 + \beta e_j \right) \right\|^2 = a_i^2 \left(-\alpha^2 + \beta^2 \right)$.
	
	On the other side,
	\begin{align*}
	& \left\|\pi_i\left(\alpha e_0 + \beta e_j \right) \right\|^2 = \alpha^2 \|\pi_ie_0\|^2 + 2\alpha\beta \interno{\pi_ie_0}{\pi_ie_j} + \beta^2 \|\pi_ie_j\|^2 = \\
	& =-a_i^2\alpha^2 + 2\alpha\beta \interno{\pi_ie_0}{\pi_ie_j} + \beta^2 \|\pi_ie_j\|^2.
	\end{align*}
	Hence
	\begin{align*}
	& -a_i^2\alpha^2 + 2\alpha\beta \interno{\pi_ie_0}{\pi_ie_j} + \beta^2 \|\pi_ie_j\|^2 = a_i^2 \left(-\alpha^2 + \beta^2 \right) \ \Rightarrow \\
	& \Rightarrow \ 2\alpha\beta \interno{\pi_ie_0}{\pi_ie_j} + \beta^2 \|\pi_ie_j\|^2 = a_i^2 \beta^2 \ \Rightarrow \\
	& \Rightarrow \interno{\pi_ie_0}{\pi_ie_j} = \frac{\beta}{2\alpha}\left(a_i^2 - \|\pi_ie_j\|^2 \right), \ \text{if} \ |\alpha|>|\beta|.
	\end{align*}
	
	Thus, if we first take $\alpha=2$ and $\beta =1$, and then we take $\alpha=2$ and $\beta = -1$, we will conclude that
	\[\interno{\pi_ie_0}{\pi_ie_j} = \frac{1}{4}\left(a_i^2 - \|\pi_ie_j\|^2 \right) \quad \text{and} \quad \interno{\pi_ie_0}{\pi_ie_j} = \frac{-1}{4}\left(a_i^2 - \|\pi_ie_j\|^2 \right).\]
	Therefore $\interno{\pi_ie_0}{\pi_i e_j} = 0$ and $\|\pi_ie_j\|^2 = a_i^2$.
	
	Now, let $\alpha$, $\beta$, $\g \in \R$ such that $\alpha^2 > \beta^2 + \g^2$, and let $j_1, j_2 \in \{1, \cdots, n\}$ with $j_1\ne j_2$. Thus $\alpha e_0 + \beta e_{j_1} + \gamma e_{j_2}$ is timelike and
	\begin{multline*}
	a_i^2\left(-\alpha^2 + \beta^2 + \g^2 \right) = \left\|\pi_i\left(\alpha e_0 + \beta e_{j_1} + \g e_{j_2}\right) \right\|^2 =\\
	= a_i^2\left(-\alpha^2 + \beta^2 + \g^2 \right) + 2 \beta \g \interno{\pi_ie_i}{\pi_ie_j}.
	\end{multline*}
	
	Therefore, $\interno{\pi_ie_{j_1}}{\pi_ie_{j_2}} = 0$ and we conclude that $\left.\pi_i\right|_V$ is a similarity with ratio $a_i$. \cqd

	\noindent \underline{\textsl{(II)} $\Rightarrow$ \textsl{(III)}:} Lets suppose that each $\left.\pi_i\right|_V$ be a similarity of ratio $a_i$, thus each $a_i^{-1}\cdot \left.\pi_i\right|_V$ is a linear isometry on its image. So, given an orthonormal base $\{v_0, \cdots, v_n\}$ of $V$, let $G \colon \R_{\tau(k)}^{n+1} \to V$ be the linear isometry such that $G(e_i) = v_i$, where $\{e_0, \cdots, e_n\}$ is the canonical base of $\R_{\tau(k)}^{n+1}$ (with $e_0$ timelike, if $k < 0$). Now let $T_i$ be given by $T_i := a_i^{-1} \cdot \left(P_i \circ G \right)$, where $P_i$ is the projection of $\R_{\ell \tau(k)}^{\ell n+\ell} = \prod\limits_{i=1}^\ell \R_{\tau(k)}^{n+1}$ on its $i$th factor. Hence each $T_i$ is linear and
	\[V = \set{G(x)}{x \in \R_{\tau(k)}^{n+1}} = \set{\big(a_1 T_1(x), \cdots, a_\ell T_\ell(x) \big)}{x \in \R_{\tau(k)}^{n+1}}.\]
	
	On the other side, $\interno{T_i(x)}{T_i(y)} = \frac{1}{a_i^2}\interno{\pi_i(G(x))}{\pi_i(G(y))} = \interno{x}{y}$, therefore each $T_i$ is an isometry. \cqd

	\noindent \underline{\textsl{(III)} $\Rightarrow$ \textsl{(I)}:}
	Lets suppose that there are $T_1, \cdots, T_\ell \in \mathrm{O}_{\tau(k)}(n+1)$ such that
	\[V = \set{ \big(a_1 T_1(x), \cdots, a_\ell T_\ell(x) \big)}{x \in \R_{\tau(k)}^{n+1}}.\]
	
	If $y \in V \cap \S(0,\e\hksqrt{|k|^{-1}})$, then $y = \big(a_1 T_1(x), \cdots, a_\ell T_\ell(x) \big)$, for some $x \in \R_{\tau(k)}^{n+1}$ and $\|y\|^2 = \frac{1}{k}$. Since $\sum\limits_{i=1}^\ell a_i^2 = 1$, then $\|x\|^2 = \|T_i(x)\|^2 = \|y\|^2 = \frac{1}{k}$. Thus $\|\pi_i(y)\|^2 = a_i^2 \|T_i(x)\|^2 = \frac{a_i^2}{k} = \frac{1}{k_i}$. Therefore $y \in \S_{k_1}^n \x \cdots \x \S_{k_\ell}^n$.
\end{proof}

\begin{cor}\label{cor: V}
	Let $k_1, \cdots, k_\ell \in \R_-^*$ and let $a_i$ and $k$ be like in Lemma \ref{lem: V}. Let also $V^{n+1} \subset \prod\limits_{i=1}^\ell \L^{n+1}$ be a vector subspace isomorphic to $\L^{n+1}$ and $C$ one of the two connected components of $V \cap \S_k^{n\ell + \ell -1}$. With the assumptions above, if $C \subset \prod\limits_{i=1}^\ell \O_{k_i}^n$, then there are linear isometries $T_1, \cdots, T_\ell \in \mathrm{O}_{\tau(k)}(n+1)$ such that $V = \set{\left(a_1 T_1(x), \cdots, a_\ell T_\ell(x)\right)}{x \in \R_{\tau(k)}^{n+1}}$ and	$T_i\left(\O_{k_i}^n \right) = \O_{k_i}^n$.
\end{cor}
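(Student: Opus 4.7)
The plan is to reduce to Lemma \ref{lem: V} by first checking that the stronger inclusion $V \cap \S_k^{n\ell+\ell-1} \subset \prod_i \S_{k_i}^n$ follows from the weaker hypothesis $C \subset \prod_i \O_{k_i}^n$, then to adjust the resulting $T_i$'s by an overall sign so that each one preserves the right component $\O_{k_i}^n = \H_{k_i}^n$.

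First, since $V$ is a vector subspace, the antipodal map $y\mapsto -y$ is a linear isometry of $V$ that also preserves $\S_k^{n\ell+\ell-1}$; because $k<0$, it interchanges the two connected components of this pseudosphere. Consequently the two components of $V \cap \S_k^{n\ell+\ell-1}$ are $C$ and $-C$. The hypothesis gives $C \subset \prod_i \H_{k_i}^n$ and hence $-C \subset \prod_i(-\H_{k_i}^n)$; since $\H_{k_i}^n$ and $-\H_{k_i}^n$ are precisely the two connected components of $\S_{k_i}^n$, we obtain $V\cap \S_k^{n\ell+\ell-1}\subset\prod_i \S_{k_i}^n$. Lemma \ref{lem: V} then produces linear isometries $T_i\in \mathrm{O}_{\tau(k)}(n+1)$ with $V=\{(a_1T_1 x,\ldots,a_\ell T_\ell x):x\in\L^{n+1}\}$.

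Next I would analyse the parametrization $G\colon \L^{n+1}\to V$, $G(x)=(a_1T_1 x,\ldots,a_\ell T_\ell x)$. One checks, using $\sum a_i^2=1$ and the fact each $T_i$ preserves the Lorentz form, that $G$ is a linear Lorentz isometry, and therefore sends the upper sheet $\H_k^n$ homeomorphically onto one of the two components of $V\cap \S_k^{n\ell+\ell-1}$. If $G(\H_k^n)=-C$, replace every $T_i$ by $-T_i$; the new parametrization $\tilde G=-G=G\circ(-\mathrm{Id})$ describes the same subspace $V$, each $-T_i$ still lies in $\mathrm{O}_{\tau(k)}(n+1)$, and now $\tilde G(\H_k^n)=C$. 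Thus without loss of generality $G(\H_k^n)=C$.

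With this normalization, for every $x\in \H_k^n$ the point $G(x)$ lies in $C\subset \prod_i \H_{k_i}^n$, so each coordinate $a_iT_i(x)$ lies in $\H_{k_i}^n$. Since $a_i>0$, this forces $(T_i x)_0>0$, and since $T_i$ is a Lorentz isometry we also have $\|T_i(x)\|^2=\|x\|^2=1/k$; therefore $T_i(\H_k^n)\subset \H_k^n$. Because $k,k_i<0$ have the same sign, $\H_{k_i}^n=\sqrt{k/k_i}\,\H_k^n$ with a positive real scaling factor, and the linearity of $T_i$ transports the inclusion to $T_i(\H_{k_i}^n)\subset \H_{k_i}^n$. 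Finally, $T_i$ restricts to a homeomorphism of $\S_{k_i}^n$, so the image of the connected component $\H_{k_i}^n$ must be the whole component, giving $T_i(\O_{k_i}^n)=\O_{k_i}^n$ as required.

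The only delicate step is the sign adjustment: one must notice that flipping the sign of all $T_i$ simultaneously (and not individually) is what preserves the representation of $V$, and that this single collective flip is enough to swap the two components $C$ and $-C$. Everything else reduces to Lemma \ref{lem: V} together with routine connectedness and linearity arguments.
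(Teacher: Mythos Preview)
Your proof is correct and follows essentially the same route as the paper: first use $V\cap\S_k^{n\ell+\ell-1}=C\cup(-C)$ to feed Lemma~\ref{lem: V}, then perform a collective sign flip $T_i\mapsto -T_i$ if needed so that the $T_i$'s preserve the correct sheet. The only cosmetic difference is that the paper determines the sign by picking a single point $y\in C$, writing $y=(a_1T_1x,\ldots,a_\ell T_\ell x)$, and checking whether $x\in\O_k^n$ or $x\in -\O_k^n$, whereas you phrase the same dichotomy as $G(\H_k^n)=C$ versus $G(\H_k^n)=-C$; your version is just a bit more explicit about the connectedness step.
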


\begin{proof}
	If $C \subset \prod\limits_{i=1}^\ell \O_{k_i}^n$, then 
	\[V \cap \S_k^{n\ell + \ell -1} = C \cup (-C) \subset \left( \prod_{i=1}^\ell \O_{k_i}^n \right) \cup \left(-\prod_{i=1}^\ell \O_{k_i}^n \right) = \prod_{i=1}^\ell \S_{k_i}^\ell.\]
	Thus, by Lemma \ref{lem: V}, $V = \set{\left(a_1 T_1(x), \cdots, a_\ell T_\ell(x)\right)}{x \in \R_{\tau(k)}^{n+1}}$, for some linear isometries $T_1, \cdots, T_\ell \in \mathrm{O}_{\tau(k)}(n+1)$.
	
	So, if $y \in C$, there is an $x \in \R_{\tau(k)}^{n+1}$ such that $ y = \left(a_1 T_1(x), \cdots, a_\ell T_\ell(x)\right)$. But $P_i(y) = a_i T_i(x) \in \O_{k_i}^n$, for all $i \in \{1, \cdots, \ell\}$, where $P_i$ is the projection on the $i$th coordinate. Hence $T_i(x) \in \O_k^n$, for all $i \in \{1, \cdots, n \}$, and $x \in \pm \O_k^n$.
	
	If $x \in \O_k^n$, then $T_i\left(\O_k^n \right) = \O_k^n$ and $T_i \left( \O_{k_i}^n \right) = \O_{k_i}^n$, for all $i$, otherwise, taking $\tilde T_i = -T_i$, we conclude that $V = \set{\left(a_1 \tilde T_1(x), \cdots, a_\ell \tilde T_\ell(x)\right)}{x \in \R_{\tau(k)}^{n+1}}$ and $\tilde T_i \left(\O_{k_i}^n\right) = \O_{k_i}^n$.
\end{proof}

\begin{df}
	Let $f \colon M \to N$ be an isometric immersion. The \textbf{first normal space} of $f$ at $x$, is the space $\mathcal{N}_1(x) := \spa\set{\al{X}{Y}}{X, Y \in T_x M}$.
\end{df}

\begin{lem}\label{lem: sinal}
	Let $a_1, \cdots a_n \in \R^*$ and $A_i := \prod\limits_{\substack{j=1\\ j\ne i}}^n a_j$. If $A_iA_j > 0$, for all $i,j \in \{1, \cdots, n\}$, then $a_ia_j >0$, for all $i,j \in \{1, \cdots, n\}$.
\end{lem}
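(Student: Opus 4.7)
The key observation is that for $i \neq j$, the product $A_i A_j$ factors very cleanly in terms of $a_i a_j$ and a positive quantity, so the hypothesis transfers directly to the conclusion.

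My plan is to compute $A_i A_j$ explicitly for $i \neq j$. Writing out the definition,
\[
A_i A_j = \left( \prod_{\substack{k=1 \\ k \ne i}}^n a_k \right) \left( \prod_{\substack{k=1 \\ k \ne j}}^n a_k \right).
\]
Each index $k \notin \{i,j\}$ appears in both factors, contributing $a_k^2$, while $a_i$ appears only in the second factor and $a_j$ only in the first. Hence
\[
A_i A_j \;=\; a_i a_j \prod_{\substack{k=1 \\ k \ne i, j}}^n a_k^2.
\]
Since every $a_k$ is nonzero, the product $\prod_{k \ne i,j} a_k^2$ is strictly positive, so the sign of $A_i A_j$ agrees with the sign of $a_i a_j$. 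The hypothesis $A_i A_j > 0$ therefore forces $a_i a_j > 0$ whenever $i \ne j$. The case $i = j$ is immediate, since $a_i^2 > 0$ because $a_i \in \R^*$.

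There is no real obstacle here: the statement is essentially a tautology once the definition of $A_i$ is unpacked, because the ``extra'' factors in $A_i A_j$ beyond $a_i a_j$ are all squares. The only thing to be mildly careful about is the edge case $n = 2$, where the product $\prod_{k \ne i,j} a_k^2$ is empty and equals $1$; the argument still works, giving $A_1 A_2 = a_1 a_2$ directly.
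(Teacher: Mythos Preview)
Your proof is correct and, in fact, more direct than the paper's. The paper argues by contradiction: assuming the $a_i$ do not all share a sign, it reorders them so that $a_1,\dots,a_m$ are negative and $a_{m+1},\dots,a_n$ are positive, then computes the signs of $A_1$ and $A_n$ explicitly to find $A_1 A_n < 0$. Your approach instead exhibits the algebraic identity $A_iA_j = a_ia_j\prod_{k\ne i,j}a_k^2$ (for $i\ne j$), which makes the equivalence of signs immediate and handles all pairs at once rather than producing a single counterexample pair. The identity you use is the cleaner route; the paper's argument is slightly more hands-on but yields the same conclusion. Either way the lemma is essentially trivial once the structure is seen.
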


\begin{proof}
	Lets suppose, by absurd, that $a_1, \cdots, a_n$ do not have all the same sign. Hence, reordering if necessary, we can suppose that $a_1, \cdots, a_m$ are negative and that $a_{m+1}, \cdots, a_n$ are positive. Thus
	\begin{align*}
	A_1 &= \prod_{j=2}^n a_j = \prod_{j=2}^m a_j \cdot \prod_{j=m+1}^n a_j = (-1)^{m-1}\prod_{j=2}^n|a_j|, \\
	A_n &= \prod_{j=1}^{n-1} a_j = \prod_{j=1}^m a_j \cdot \prod_{j=m+1}^{n-1} a_j = (-1)^m\prod_{j=1}^{n-1}|a_j|.
	\end{align*}
	Therefore $A_1A_n < 0$, which is a contradiction.
\end{proof}

\begin{prop}\label{prop: Phi=0}
	Let $f \colon M \to \hO$ be an isometric immersion with $k_1$, $\cdots$, $k_\ell \in \R^*$. Thus, if $k_i \BR_i = k_j \BR_j$, for every $i, j$, then the following claims hold:
	\begin{enum}
		\item $\l := \sum\limits_{i=1}^\ell M_i \ne 0$, where $M_i := \prod\limits_{\substack{j=1\\ j \ne i}}^\ell k_j$.
		\item $\BR_i = \l_i \id$, where $\l_i := \frac{M_i}{\l}$.
		\item $k_ik_j > 0$.
		\item $\BL_i$ is a similarity of ratio $a_i := \hksqrt{\l_i}$.
		\item $\|\pi_i F\|^2 = a_i^2 \|F\|^2$.
		\item $\mathcal{N}_1 \perp \BS(TM)$.
		\item If $\mathcal{N}_1$ is parallel in the normal connexion of $f$, then each $\left.\pi_i\right|_{\mathcal{N}_1}$ is a similarity of ratio $a_i$.
	\end{enum}
\end{prop}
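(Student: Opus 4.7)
The plan is to attack the seven claims in order, using three tools: the algebraic identities $\sum_i \BR_i = \id$ from \eqref{somas} together with the eigenvalue bound of Remark~\ref{observa��o: autovalor R} for (I)--(III); the defining formulas for $\BL_i$ and $F$ for (IV) and (V); and the derivative formulas \eqref{eq: derivada R}, \eqref{eq: derivada S} combined with the algebraic identities \eqref{eq: RST} for (VI) and (VII).

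For (I) and (II), the idea is to take $\BR_1$ as a reference: the hypothesis $k_i \BR_i = k_1 \BR_1$ yields $\BR_i = (k_1/k_i)\BR_1$, and summing against \eqref{somas} gives $\id = \bigl(k_1\l/\prod_j k_j\bigr)\BR_1$. This forces $\l \ne 0$ (otherwise $\id = 0$) and then $\BR_1 = \l_1 \id$, whence $\BR_i = \l_i \id$. For (III), Remark~\ref{observa��o: autovalor R} pins the eigenvalues of $\BR_i$ in $[0,1]$, so $\l_i \geq 0$; hence every $M_i$ shares the sign of $\l$, giving $M_i M_j > 0$, and Lemma~\ref{lem: sinal} upgrades this to $k_i k_j > 0$. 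Claim (IV) reads off from $\interno{\BL_i X}{\BL_i Y} = \interno{\BR_i X}{Y} = a_i^2 \interno{X}{Y}$, and (V) is a routine calculation with $\|\pi_i F\|^2 = 1/k_i$ and $\|F\|^2 = \sum_j 1/k_j = \l/\prod_j k_j$.

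The first substantive step is (VI). Because $\BR_i = \l_i \id$ is parallel, equation \eqref{eq: derivada R} collapses to $A_{\BS_i Y} X = -\BS_i^\t \al{X}{Y}$. I would encode this as a trilinear form $T(X,Y,Z) := \interno{\al{X}{Y}}{\BS_i Z}$: symmetry of $\al$ gives $T(X,Y,Z) = T(Y,X,Z)$, and pairing the displayed identity with $Z$ gives $T(X,Z,Y) = -T(X,Y,Z)$. A short cyclic chase combining symmetry in slots $(1,2)$ and antisymmetry in slots $(2,3)$ forces $T \equiv 0$, which is precisely $\mathcal{N}_1 \perp \BS_i(TM)$.

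I expect (VII) to be the main obstacle, since it must synthesize (VI), the parallelism hypothesis, both \eqref{eq: derivada S}, and \eqref{eq: RST}. The plan, for $\xi \in \mathcal{N}_1$: part (VI) gives $\BS_i^\t \xi = 0$, and since $\mathcal{N}_1$ is parallel this promotes to $(\n_X \BS_i^\t)\xi = \n_X(\BS_i^\t \xi) - \BS_i^\t \nperp_X \xi = 0$; feeding this into the second identity of \eqref{eq: derivada S} yields $A_{\BT_i \xi} = \l_i A_\xi$, so $\BT_i \xi - \l_i \xi$ has vanishing shape operator and therefore lies in $\mathcal{N}_1^\perp$. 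Independently, $\BS_i^\t \xi = 0$ combined with \eqref{eq: RST} gives $\BT_i^2 \xi = \BT_i \xi$, so by self-adjointness $\|\BT_i \xi\|^2 = \interno{\BT_i \xi}{\xi}$. Comparing $\mathcal{N}_1$-components with the previous step shows that component of $\BT_i \xi$ equals $\l_i \xi$, so $\interno{\BT_i \xi}{\xi} = \l_i \|\xi\|^2$. Finally, $\pi_i \xi = f_* \BS_i^\t \xi + \BT_i \xi = \BT_i \xi$ for $\xi \in \mathcal{N}_1$, giving $\|\pi_i \xi\|^2 = a_i^2 \|\xi\|^2$, and polarization upgrades this to the similarity. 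The delicate point is that one never pins $\BT_i \xi$ down to $\l_i \xi$ itself, only its $\mathcal{N}_1$-component; the bridge is the identity $\BT_i^2 = \BT_i$ on $\ker \BS_i^\t$, which lets one extract $\|\BT_i \xi\|^2$ from the inner product $\interno{\BT_i \xi}{\xi}$ alone.
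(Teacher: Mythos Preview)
Your proof is correct and tracks the paper's argument almost line by line for (I)--(VI); even the trilinear-form trick in (VI) is exactly what the paper does. For (VII) the paper obtains the same intermediate fact---that $\BT_i\xi-\l_i\xi\in\mathcal N_1^\perp$ for $\xi\in\mathcal N_1$---by pairing the first equation of \eqref{eq: derivada S} with $\al{W}{Z}$, but then finishes more directly: since $\interno{\pi_i\xi}{\pi_i\eta}=\interno{\BK_i\xi}{\BK_i\eta}=\interno{\BT_i\xi}{\eta}$ for \emph{all} normal $\xi,\eta$, the identity $\interno{\BT_i\xi}{\eta}=\l_i\interno{\xi}{\eta}$ on $\mathcal N_1$ is already the similarity statement, with no need for your detour through $\BT_i^2=\BT_i$ on $\ker\BS_i^\t$ and polarization.
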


\begin{prova} From $k_i \BR_i = k_j \BR_j$, we know that $\BR_j = \frac{k_i}{k_j} \BR_i$.

	\noindent \textbf{\textsl{(I)} and \textsl{(II)}:} Using the first equation in \eqref{somas},
	
	\begin{multline*}
		\id = \sum_{j=1}^n \BR_j = \BR_i + \sum_{\substack{j=1,\\ j\ne i}}^\ell \BR_j = \left[1 + \sum_{\substack{j=1\\ j\ne i}}^\ell \frac{k_i}{k_j}\right]\BR_i = \left[1 + k_i \frac{\sum\limits_{\substack{j=1\\j \ne i}}^\ell \prod\limits_{\substack{l=1\\ l \notin\{i, j\}}}^\ell k_l}{ \prod\limits_{\substack{j=1\\ j\ne i}}^\ell k_j}\right] \BR_i = \\
		= \frac{\prod\limits_{\substack{j=1\\ j\ne i}}^\ell k_j + \sum\limits_{\substack{j=1\\ j\ne i}}^\ell \prod\limits_{\substack{l=1\\ l\ne j}}^\ell k_l}{\prod\limits_{\substack{j=1\\ j\ne i}}^\ell k_j} \BR_i = \frac{M_i + \sum\limits_{\substack{j=1\\ j\ne i}}^\ell M_j}{M_i} \BR_i = \frac{\l}{M_i} \BR_i.
	\end{multline*}
	Therefore $\l \ne 0$ and $\BR_i = \frac{M_i}{\l} \id$. \cqd

	\noindent \textbf{\textsl{(III)}:} Let $X \in T M$ with $X \ne 0$. Hence
	\[\interno{\BS_i X}{\BS_i X} = \interno{\BS_i^\t\BS_i X}{X} \stackrel{\eqref{eq: RST}}{=} \interno{\BR_i(\id - \BR_i)X}{X} \stackrel{\text{\textsl{(II)}}}{=} \l_i(1-\l_i) \|X\|^2  \geq 0.\]

	Since $\BR_i \ne 0$, then $\l_i \in (0,1]$, and it follows that $\l$ and $M_i$ have the same sign. Therefore, $M_1$, $\cdots$, $M_\ell$ have all the same sign and, by Lemma \ref{lem: sinal}, $k_ik_j >0$, for all $i,j \in \{1, \cdots, \ell\}$. \cqd

	\noindent \textbf{\textsl{(IV)}:} Since $\BL_i^\t\BL_i = \BR_i = \l_i \id$, then $\interno{\BL_i X}{\BL_i Y} = \l_i\interno{X}{Y}$. Therefore $\BL_i$ is a similarity of ratio $a_i = \hksqrt{\l_i}$. \cqd

	\noindent \textbf{\textsl{(V)}:} $\|\pi_i F\|^2 = \frac{1}{k_i} = \frac{\prod\limits_{\substack{j=1\\ j\ne i}}^\ell k_j}{\prod\limits_{j=1}^\ell k_j} = \frac{\prod\limits_{\substack{j=1\\ j\ne i}}^\ell k_j}{\l}\cdot \frac{\l}{\prod\limits_{j=1}^\ell k_j} = \l_i \sum\limits_{i=1}^\ell \frac{1}{k_i} = \l_i \|F\|^2$. \cqd

	\noindent \textbf{\textsl{(VI)}:} Lets consider the trilinear application $\beta \colon T_xM\x T_xM \x T_xM \to \R$ given by $\beta(X,Y,Z) = \interno{\al{X}{Y}}{\BS_i Z}$.
	
	Using equation \eqref{eq: derivada R}, we conclude that $A_{\BS_i Y} X = - \BS_i^\t \al{X}{Y}$. Thus,
	\[\interno{\al{X}{Z}}{\BS_i Y} = \interno{A_{\BS_i Y} X}{Z} = - \interno{\BS_i^\t \al{X}{Y}}{Z} = -\interno{\al{X}{Y}}{\BS_i Z}.\]
	Hence $\beta$ is symmetric in the first 2 variables and antisymmetric in the last two, thus $\beta = 0$ and $\al{X}{Y} \perp \BS_i(T_x M)$. \cqd

	\noindent \textbf{\textsl{(VII)}:} Lets suppose that $\mathcal{N}_1$ is parallel in the normal connexion. Hence, by equation \eqref{eq: derivada S},
	\begin{align*}
		& \interno{(\n_X \BS_i)Y}{\al{W}{Z}} = \interno{\BT_i \al{X}{Y} - \al{X}{\BR_i Y}}{\al{W}{Z}} \ \Rightarrow \\
		& \Rightarrow \ \interno{\nperp_X \BS_i Y}{\al{W}{Z}} - \interno{\BS_i \n_X Y}{\al{W}{Z}} = \interno{\BT_i \al{X}{Y}}{\al{W}{Z}} - \\
			& \quad - \interno{\l_i \al{X}{Y}}{\al{W}{Z}} \ \Rightarrow \\
		& \stackrel{\textsl{(VI)}}{\Rightarrow} \ \interno{\nperp_X \BS_i Y}{\al{W}{Z}} = \interno{\BT_i \al{X}{Y}}{\al{W}{Z}} - \\
			& \quad \l_i \interno{\al{X}{Y}}{\al{W}{Z}} \ \Rightarrow \\
		& \Rightarrow \ - \cancel{\interno{\BS_i Y}{\nperp_X \al{W}{Z}}} = \interno{\BT_i \al{X}{Y}}{\al{W}{Z}} - \\
			& \quad - \l_i \interno{\al{X}{Y}}{\al{W}{Z}} \ \Rightarrow \\
		& \Rightarrow \ \interno{\BT_i \al{X}{Y}}{\al{W}{Z}} = \l_i \interno{\al{X}{Y}}{\al{W}{Z}} \ \Rightarrow \\
		& \Rightarrow \ \interno{\pi_i\al{X}{Y}}{\pi_i\al{W}{Z}} = \l_i \interno{\al{X}{Y}}{\al{W}{Z}}.
	\end{align*}

	Therefore $\left.\pi_1\right|_{\mathcal{N}_1}$ is a similarity with ratio $a_i = \hksqrt{\l_i}$.
\end{prova}

\begin{teo}\label{teo: Phi=0}
	Let $f \colon M^m \to \hO$ be an isometric immersion with $k_1$, $\cdots$, $k_\ell \in \R^*$ and lets assume that $\mathcal{N}_1$ is a parallel vector sub-bundle of $T^\perp M$ with dimension $\bar n$. If $k_i \BR_i = k_j \BR_j$, for every $i, j \in \{1, \cdots, \ell\}$, then $k_ik_j > 0$ for all pairs $i, j$, $m+ \bar n \leq \min\{n_1, \cdots, n_\ell\}$ and $f = (\jmath_1\x\cdots \x \jmath_\ell) \circ g \circ \bar f$, where each $\jmath_i \colon \O_{k_i}^{m+\bar n} \hookrightarrow \o{i}$ is a totally geodesic inclusion, $g \colon \O_k^{m+\bar n} \to \prod\limits_{i=1}^\ell \O_{k_1}^{m+\bar n}$ is a totally geodesic immersion like the one given at the beginning of this section and $\bar f \colon M^m \to \O_k^{m+\bar n}$ is a isometric immersion.
	
\end{teo}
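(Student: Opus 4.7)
The plan is to invoke Proposition \ref{prop: Phi=0} first to harvest all of its conclusions from the hypothesis $k_i\BR_i = k_j\BR_j$: $k_ik_j>0$, $\BR_i = \l_i\id$ with $\l_i = M_i/\l$, each $\BL_i$ a similarity of ratio $a_i := \hksqrt{\l_i}$, $\|\pi_i F\|^2 = a_i^2\|F\|^2$, $\mathcal{N}_1 \perp \BS_i(TM)$ for every $i$, and each $\pi_i|_{\mathcal{N}_1}$ a similarity of ratio $a_i$. Set $k := k_i\l_i$, which is independent of $i$ by the hypothesis and has the same sign as the $k_i$'s.

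The heart of the argument is a codimension reduction for $F = \imath\circ f\colon M\to\RN$. Consider the rank $m+\bar n+1$ subbundle
\[\mathcal{W}_p := F_*T_pM \op \imath_*\mathcal{N}_1(p) \op \spa\{F(p)\} \subset \RN,\]
with the three summands mutually orthogonal (the first two lie in $T_{F(p)}\hO \perp F(p)$, and $F_*TM\perp\imath_*\mathcal{N}_1$ since $\mathcal{N}_1\subset T_f^\perp M$). I would then show $\mathcal{W}$ is $\ntil$-parallel. For a section $F_*Y$, Lemma \ref{lem: aF} gives
\[\ntil_X F_*Y = F_*\n_X Y + \imath_*\af{X}{Y} + \sum_{i=1}^\ell \interno{\BR_i X}{Y}\nu_i,\]
and the crucial observation is that $\BR_i=\l_i\id$ together with $\l_ik_i = k$ yields $\sum_i\l_i\nu_i = -\sum_i \l_i k_i \pi_i F = -kF$, collapsing the last sum to $-k\interno{X}{Y}F\in\spa\{F\}$. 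For a section $\imath_*\eta$ with $\eta\in\G(\mathcal{N}_1)$, equation \eqref{nbarperpxi} together with $\BS_i X \perp \mathcal{N}_1$ (Proposition \ref{prop: Phi=0}(VI)) kills the $\nu_i$-terms, while parallelism of $\mathcal{N}_1$ and $A^F_{\imath_*\eta} = A_\eta^f$ place the remainder in $F_*TM\op\imath_*\mathcal{N}_1$. Finally $\ntil_X F = F_*X$. So $\mathcal{W}_p = W$ is a fixed subspace of dimension $m+\bar n+1$ and $F(M)\subset W$.

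Next I would prove each $\pi_i|_W$ is a similarity of ratio $a_i$. On each of the three summands this is Proposition \ref{prop: Phi=0}(IV), (V), (VII), so one only has to check that the three summands remain pairwise orthogonal after applying $\pi_i$. Two of those checks follow from Lemma \ref{tangentei} and differentiating $\|\pi_i F\|^2 = 1/k_i$; the third reduces via $\pi_i f_*X = f_*\BR_iX + \BS_iX$ and $\pi_i\xi = f_*\BS_i^\t\xi + \BT_i\xi$ to $\interno{\BR_iX}{\BS_i^\t\eta} + \interno{\BS_iX}{\BT_i\eta} = 0$, which follows from (VI) combined with the algebraic identities $\BT_i\BS_i = (1-\l_i)\BS_i$ and $\BS_i\BR_i = \l_i\BS_i$. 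Injectivity of $\pi_i|_W$ then yields $m+\bar n+1 \le n_i+1$, giving the dimension bound $m+\bar n\le\min\{n_1,\ldots,n_\ell\}$.

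Identifying each $\pi_i(W)\subset\R^{N_i}$ isometrically with $\R^{m+\bar n+1}_{\tau(k)}$, Lemma \ref{lem: V} (and Corollary \ref{cor: V} in the hyperbolic case, to pick the right component) delivers linear isometries $T_1,\ldots,T_\ell$ with $T_i(\O_k^{m+\bar n}) = \O_{k_i}^{m+\bar n}$ and
\[W = \set{(a_1 T_1(x), \ldots, a_\ell T_\ell(x))}{x\in\R^{m+\bar n+1}_{\tau(k)}}.\]
Setting $\bar f(p) := a_i^{-1}T_i^{-1}(\pi_i F(p))$ (well-defined because $F(p)\in W$), the computations $\|\bar f(p)\|^2 = 1/k$ and $\|\bar f_*X\|^2 = a_i^{-2}\|\BL_i X\|^2 = \|X\|^2$ show that $\bar f\colon M\to\O_k^{m+\bar n}$ is an isometric immersion, and with $g$ the weighted sum built from these $T_i$'s one obtains $f = (\jmath_1\x\cdots\x\jmath_\ell)\circ g\circ\bar f$ immediately. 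The \textbf{main obstacle} is the parallelism of $\mathcal{W}$, and within it the collapse $\sum_i\l_i\nu_i = -kF$: this is precisely where the hypothesis $k_i\BR_i = k_j\BR_j$ lets the $\ell$ normal directions $\nu_i$ be replaced by the single radial direction $F$, bringing the reduced ambient dimension down to $m+\bar n+1$ rather than the naive $m+\bar n+\ell$.
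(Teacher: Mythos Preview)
Your proposal is correct and follows essentially the same route as the paper: define the rank $m+\bar n+1$ bundle $F_*TM\oplus\imath_*\mathcal{N}_1\oplus\spa\{F\}$, show it is $\ntil$-parallel (the paper performs exactly the same three computations, using the same collapse $\sum_i\l_i\nu_i\in\spa\{F\}$), verify that each $\pi_i$ restricts to a similarity of ratio $a_i$ on it via Proposition~\ref{prop: Phi=0}(IV)(V)(VII) plus the cross-orthogonality checks, and finish with Lemma~\ref{lem: V}/Corollary~\ref{cor: V}. The only cosmetic difference is that you write $\bar f(p)=a_i^{-1}T_i^{-1}(\pi_iF(p))$ explicitly whereas the paper writes $\bar f=g^{-1}\circ\tilde f$, and you extract the bound $m+\bar n\le n_i$ from injectivity of $\pi_i|_W$ where the paper leaves this implicit.
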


\begin{proof}
	With our assumptions, all conclusions of Proposition \ref{prop: Phi=0} are true, hence $k_ik_j > 0$, for every $i$ and $j$.
	
	Let $\imath \colon \hO \hookrightarrow \RN$ be the canonical inclusion and $F = \imath \circ f$. Thus $V := F_*TM \op \imath_* \mathcal{N}_1^f \op \spa{F}$ is a vector bundle with dimension $m+\bar n + 1$.
	
	\begin{afi}{$V$ is a constant vector subspace of $\RN$.}
		Let $X,Y,Z \in \G(TM)$. Thus
		\begin{align*}
			& \ntil_X F_* Y \stackrel{\text{Lemma \ref{lem: aF}}}{=} F_* \n_X Y + \imath_* \af{X}{Y} + \sum_{i=1}^\ell \interno{\BR_i X}{Y} \nu_i = \\
			& = F_* \n_X Y + \imath_* \af{X}{Y} + \sum_{i=1}^\ell \l_i \interno{X}{Y} k_i (\pi_i \circ F) = \\
			& = F_* \n_X Y + \imath_* \af{X}{Y} + \sum_{i=1}^\ell \frac{\prod\limits_{\substack{j=1\\ j\ne i}}^\ell k_j}{\l} \interno{X}{Y} k_i (\pi_i \circ F) = \\
			& = F_* \n_X Y + \imath_* \af{X}{Y} + \frac{\prod\limits_{j=1}^\ell k_j}{\l} \interno{X}{Y} F \in \G(V); \\
			& \ntil_X \imath_* \af{Y}{Z} = \imath_* \nbar_X \af{Y}{Z} + \ai{f_* X}{\af{Y}{Z}} = \\
			&= -F_* A^f_{\af{Y}{Z}} X + \imath_* \nperp_X \af{Y}{Z} + \sum_{i=1}^\ell \interno{\pi_i f_*X}{\af{Y}{Z}}\nu_i = \\
			&= -F_* A^f_{\af{Y}{Z}} X + \imath_* \nperp_X \af{Y}{Z} + \sum_{i=1}^\ell \cancel{\interno{\BS_i f_*X}{\af{Y}{Z}}}\nu_i = \\
			&= -F_* A^f_{\af{Y}{Z}} X + \imath_* \nperp_X \af{Y}{Z}.
		\end{align*}
	
		Since $\mathcal{N}_1$ is parallel in the normal connexion of $f$, $\ntil_X \imath_* \af{Y}{Z} \in \Gamma(V)$.
	
		Last, $\ntil_X F = F_* X \in V$. Thus $V$ is a parallel sub-bundle of $\RN$, therefore $V$ is a constant subspace on $\RN$.
	\end{afi}
	
	Because $V$ is a constant subspace of $\R_y^N$ and $F_*(TM) \subset V$, then $F(M) \subset F(x_0) + V$, for any fixed $x_0 \in M$, because $X \interno{F - F(x_0)}{\xi} = \interno{F_* X}{\xi} = 0$, for all constant $\xi \in V^\perp$. But $\spa\{F(x_0)\} \subset V$, hence $F(M) \subset V$.
	
	Let $x_0 \in M$ be a fixed point, then $V = \spa\left\{\frac{F(x_0)}{\|F(x_0)\|}\right\}\op F_*T_{x_0}M \op \imath_* \mathcal{N}_1(x_0)$.
	
	Since $k_1, \cdots, k_\ell$ have the same sign, $\|f(x)\|^2 = \sum\limits_{i=1}^\ell \frac{1}{k_i} \ne 0$, for all $x \in M$. Thus $V$ is isomorphic to $\R_{\tau(k_1)}^{m+\bar n +1}$.

	\begin{afi}{Each $\left.\pi_i\right|_V$ is a similarity of ratio $a_i = \hksqrt{\l_i}$.}
		Let $X,Y,Z \in T_{x_0}M$. Hence
		\begin{align*}
		& \interno{\pi_iF(x_0)}{\pi_iF_*X} = \interno{\nu_i(x_0)}{F_*X} = 0; \\
		& \interno{\pi_iF(x_0)}{\pi_i\imath_* \af{X}{Y}} = \interno{\nu_i(x_0)}{\imath_* \af{X}{Y}} = 0; \\
		& \interno{\pi_iF_*X}{\pi_i\imath_*\af{Y}{Z}} = \interno{-\BS_i X}{\af{Y}{Z}} = 0.
		\end{align*}
		Thus $\left.\pi_i\right|_V = \left.\pi_i\right|_{\spa\{F(x_0)\}}\x\left.\pi_i\right|_{F_*T_{x_0}M}\x\left.\pi_i\right|_{\mathcal{N}_1^f(x_0)}$, and,  by items \textsl{(IV)}, \textsl{(V)} and \textsl{(VII)} of Proposition \ref{prop: Phi=0}, we know that $\left.\pi_i\right|_{\spa\{F(x_0)\}}$, $\left.\pi_i\right|_{F_* T_{x_0} M}$ and $\left.\pi_i\right|_{\mathcal{N}_1^f(x_0)}$ are similarities of ratio $a_i$. Therefore $\left.\pi_i\right|_V$ is a similarity of ratio $a_i$.
	\end{afi}
	
	Given that each $\left.\pi_i\right|_V$ is a similarity of ratio $a_i$, then $\pi_i(V) = \{(0, \cdots, 0)\} \x \R_{\tau(k_1)}^{m+\bar n+1}\x\{(0, \cdots, 0)\}$. Hence $V \subset \prod\limits_{i=1}^\ell\R_{\tau(k_1)}^{m+\bar n+1}$. But $f(M) \subset V \cap \left( \XO \right)$, hence, $f(M) \subset \prod\limits_{i=1}^\ell \O_{k_i}^{m+\bar n}$. Therefore, $f = (\jmath_1\x \cdots, \x \jmath_\ell) \circ \tilde{f}$, where each $\jmath_i \colon \O_{k_1}^{m+\bar n} \to \o{i}$ is a inclusion and $\tilde{f} \colon M \to \O_{k_1}^{m+\bar n}\x \cdots \x \O_{k_\ell}^{m+\bar n}$ is a isometric immersion with $\tilde{f}(M) \subset V$.
	
	Since $f(M) \subset V \cap \prod\limits_{i=1}^\ell \O_{k_i}^{m+\bar n}$, then there is a connected component $C$ of $V \cap \S_k^{(m+\bar n)\ell + \ell -1}$ such that $\pi(C) \subset \O_{k_i}^{m+\bar n}$. Thus by Lemma \ref{lem: V} (and Corollary \ref{cor: V},if $k < 0$) there are linear isometries $T_1, \cdots, T_\ell$ such that $T_i\o{i} = \o{i}$ and
	\[V = \set{G(x)}{x \in \R_{\tau(k)}^{n+1}} = \set{\big(a_1 T_1(x), \cdots, a_\ell T_\ell(x) \big)}{x \in \R_{\tau(k)}^{n+1}}.\]
	
	Let $g \colon \O_k^{m+\bar n} \to \O_{k_1}^{m+\bar n}\x \cdots \x \O_{k_\ell}^{m+\bar n}$ be the totally geodesic isometric immersion given at the beginning of this section.
	
	\begin{afi}{$\tilde f(M) \subset g\left( \O_k^{m+\bar n} \right)$.}	
		\begin{align*}
		& y \in \tilde f(M) \Rightarrow y \in V \cap \prod_{i=1}^\ell \O_{k_i}^{m+\bar n} \Rightarrow \\
		& \Rightarrow y = (a_1T_1(x), \cdots, a_\ell T_\ell(x)), \ \text{for some} \ x \in \R^{m+\bar n+1}_{\tau(k)} \ \text{and} \ a_iT_i(x) \in \O_{k_i}^{m+\bar n} \Rightarrow \\
		& \Rightarrow \pi_i(y) = a_i T_i(x) \ \text{and} \ x \in \O_{\tilde k_i}^{m + \bar n}.
		\end{align*}
		Therefore $y \in g\left( \O_k^{m+\bar n} \right)$.
	\end{afi}
	
	Let $\bar{f} := g^{-1} \circ \tilde{f} \colon M \to \O_k^{m+\bar n}$. Thus $\bar{f}$ is a isometric immersion and $f = (\jmath_1\x \cdots \x \jmath_\ell) \circ g \circ \bar{f}$.
\end{proof}
	\section{Reduction of codimension}
		We say that the codimension of $f \colon M^m \to \hO$ \textbf{is reduced by $\bar n$ at the $i$th coordinate}, if there is a totally geodesic submanifold $\O_{k_i}^{m_i} \subset \o{i}$ such that $n_i - m_i = \bar n$ and $f(M) \subset \o{1}\x \cdots \x \O_{k_i}^{m_i} \x \cdots \x \o{\ell}$.

For each $i \in \{1, \cdots, \ell\}$, let $\O_{k_i}^{m_i} \subset \o{i}$ be a totally geodesic submanifold. Let also $\R_{\tau(k_i)}^{m_i+\upsilon(k_i)} \subset \R^{N_i}$ be such that $\O_{k_i}^{m_i} = \R_{\tau(k_i)}^{m_i+\upsilon(k_i)} \cap \o{i}$. Lets consider the orthogonal projections
\[\begin{matrix}
	\Pi_i : & \R_{\tau(k_1)}^{m_1+\upsilon(k_1)}\x \cdots \x \R_{\tau(k_\ell)}^{m_\ell+\upsilon(k_\ell)} & \longrightarrow & \R_{\tau(k_1)}^{m_1+\upsilon(k_1)}\x \cdots \x \R_{\tau(k_\ell)}^{m_\ell+\upsilon(k_\ell)} \\
	& (x_1, \cdots, x_n) & \longmapsto & (0, \cdots, x_i, \cdots, 0).
\end{matrix}\]
Thus each $\Pi_i$ is the restriction of $\pi_i$ to $\R_{\tau(k_1)}^{m_1+\upsilon(k_1)}\x \cdots \x \R_{\tau(k_\ell)}^{m_\ell+\upsilon(k_\ell)}$.

Now, let $\tilde{f} \colon M^m \to \O_{k_1}^{m_1}\x\cdots\x\O_{k_\ell}^{m_\ell}$ be a isometric immersion and $f \colon M^m \to \hO$ be given by $f := (\jmath_1 \x \cdots \x \jmath_\ell) \circ \tilde{f}$, where each $\jmath_i \colon \O_{k_i}^{m_i} \hookrightarrow \o{i}$ is a totally geodesic inclusion. We will also use $\jmath_i$ denote the inclusion $\jmath_i \colon \R_{\tau(k_i)}^{m_i} \to \R_{\tau(k_i)}^{n_i}$, hence, $(\jmath_1 \x \cdots \x \jmath_\ell) \circ \Pi_i = \pi_i \circ (\jmath_1 \x \cdots \x \jmath_\ell)$ and
\begin{align*}
	& \pi_i f_* X = \left(\jmath_1\x \cdots \x\jmath_\ell\right)_*\Pi_i \tilde{f}_* X \Rightarrow \\
	& \quad \Rightarrow f_* \BR_i^f X + \BS_i^f X = f_* \BR_i^{\tilde f} X + (\jmath_1 \x \cdots \x \jmath_\ell)_* \BS_i^{\tilde f} X. \\
	& \pi_i (\jmath_1 \x \cdots \x \jmath_\ell)_* \xi = (\jmath_1 \x \cdots \x \jmath_\ell)_*\Pi_i \xi \ \Rightarrow \\
	& \Rightarrow \ f_*\left(\BS_i^f\right)^\t (\jmath_1 \x \cdots \x \jmath_\ell)_* \xi + \BT_i^f (\jmath_1 \x \cdots \x \jmath_\ell)_* \xi = \\
	& \quad = f_* \big(\BS_i^{\tilde f}\big)^\t\xi + (\jmath_1 \x \cdots \x \jmath_\ell)_* \BT_i^{\tilde f} \xi.
\end{align*}
Therefore
\begin{equation}\label{eq: tensores composta}
	\begin{aligned}
		& \BR_i^f = \BR_i^{\tilde f}, \quad \BS_i^f = (\jmath_1 \x \cdots \x \jmath_\ell)_*\BS_i^{\tilde f}, \\
		& \left(\BS_i^f\right)^\t \left.(\jmath_1 \x \cdots \x \jmath_\ell)_*\right|_{T_{\tilde f}^\perp M} = \big(\BS_i^{\tilde f}\big)^\t,\\
		& \BT_i^f \left.(\jmath_1 \x \cdots \x \jmath_\ell)_*\right|_{T_{\tilde f}^\perp M} = (\jmath_1 \x \cdots \x \jmath_\ell)_* \BT_i^{\tilde f}.
	\end{aligned}
\end{equation}

In the rest of this section we are going to prove some results about reduction of codimension of $f$ at the $i$th coordinate.

\begin{lem}\label{lem: reducao}
	If $f\colon M^m \to \hO$ is a isometric immersion, then:
	\begin{enum}
		\item $\BS_i(TM)^\perp$ is invariant under $\BT_i$ and $\BS_i(TM)^\perp = \mathcal{U}_i \op \mathcal{V}_i$, where $\U_i := \ker \BT_i$ and $\V_i := \ker(\id - \BT_i)$.
		\item $\nperp \left( \U_i \cap \mathcal{N}_1^\perp \right)\subset \U_i$ and $\nperp \left(\V_i \cap \mathcal{N}_1^\perp \right)\subset \V_i$.
		\item $\pi_i$ fixes the points of $\V_i$ and $\id - \pi_i$ fixes the points of $\U_i$.
	\end{enum}
\end{lem}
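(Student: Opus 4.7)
The three statements are naturally proved in the order (I), (III), (II), and each reduces to a direct computation once the right identity from Lemma \ref{lem: R S T} or the formula for $(\n_X \BT_i)\xi$ is invoked.

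For (I), observe first that $\BS_i(TM)^\perp$ (as a subspace of $T^\perp M$) coincides with $\ker \BS_i^\t$. I would check $\BT_i$-invariance by picking $\eta \in \ker \BS_i^\t$, applying $\BS_i \BS_i^\t = \BT_i(\id - \BT_i)$ from \eqref{eq: RST} to get $\BT_i \eta = \BT_i^2 \eta$, and using $\BT_i \BS_i = \BS_i(\id - \BR_i)$ to see that $\BS_i^\t \BT_i \eta = (\id-\BR_i)\BS_i^\t \eta = 0$. So $\BT_i$ restricted to $\BS_i(TM)^\perp$ is a self-adjoint idempotent, hence an orthogonal projection. Its kernel is $\U_i$ and its image is $\V_i$ (both sit inside $\BS_i(TM)^\perp$: for $\eta \in \U_i$, $\|\BS_i^\t \eta\|^2 = \langle \BT_i(\id-\BT_i)\eta,\eta\rangle = 0$, and similarly for $\V_i$). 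This yields the decomposition $\BS_i(TM)^\perp = \U_i \op \V_i$.

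For (III), given $\eta \in \V_i \subset \BS_i(TM)^\perp$, use \eqref{eq: K L} to write $\pi_i \eta = f_*\BS_i^\t \eta + \BT_i \eta = 0 + \eta = \eta$. Dually, for $\eta \in \U_i$, the same formula gives $\pi_i \eta = 0$, so $(\id - \pi_i)\eta = \eta$.

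The core of the lemma is (II). The plan is to apply the identity $(\n_X \BT_i)\xi = -\BS_i A_\xi X - \al{X}{\BS_i^\t \xi}$ from \eqref{eq: derivada T}. For a section $\xi$ of $\V_i \cap \mathcal{N}_1^\perp$, the hypothesis $\xi \in \mathcal{N}_1^\perp$ forces $A_\xi = 0$ (because $\langle A_\xi X, Y\rangle = \langle \al{X}{Y}, \xi\rangle$), and $\xi \in \V_i \subset \BS_i(TM)^\perp$ forces $\BS_i^\t \xi = 0$; hence $(\n_X \BT_i)\xi = 0$. Expanding this covariant derivative gives $\BT_i \nperp_X \xi = \nperp_X \BT_i \xi = \nperp_X \xi$, so $\nperp_X \xi \in \ker(\id - \BT_i) = \V_i$. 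The argument for $\U_i \cap \mathcal{N}_1^\perp$ is identical: the same vanishings yield $(\n_X \BT_i)\xi = 0$, and then $\BT_i \nperp_X \xi = \nperp_X \BT_i \xi = 0$, so $\nperp_X \xi \in \U_i$. The only subtlety here is confirming that $\xi \in \mathcal{N}_1^\perp$ truly kills both $A_\xi$ and $\BS_i^\t \xi$, but the latter comes for free from the inclusion $\U_i, \V_i \subset \BS_i(TM)^\perp$ established in (I), so no further work is needed.
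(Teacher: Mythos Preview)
Your proof is correct and follows essentially the same route as the paper's: the same identities from \eqref{eq: RST} are used for (I), the same application of \eqref{eq: K L} gives (III), and the vanishing of $(\n_X\BT_i)\xi$ via \eqref{eq: derivada T} (using $A_\xi=0$ from $\xi\in\mathcal N_1^\perp$ and $\BS_i^\t\xi=0$ from $\xi\in\BS_i(TM)^\perp$) yields (II). The only cosmetic differences are that you prove (III) before (II) and make the vanishing of both terms in \eqref{eq: derivada T} more explicit than the paper does.
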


\begin{prova}
	\par \noindent \textbf{\textsl{(I)}:} It follows from the second equation in \eqref{eq: RST} that $\BT_i \left[ \BS_i(TM) \right] \subset \BS_i(TM)$. Hence, if $\xi \in \BS_i(TM)$ and $\z \in \BS_i(TM)^\perp$, then $\interno{\xi}{\BT_i \z} = \interno{\BT_i \xi}{\z} = 0$. Therefore $\BT_i$ leaves $\BS_i(TM)^\perp$ invariant.
	
	From the third equation in \eqref{eq: RST}, it follows that $\left.\BT_i\right|_{\BS_i(TM)^\perp}^2 = \left.\BT_i\right|_{\BS_i(TM)^\perp}$, thus $\BT_i|_{\BS_i(TM)^\perp}$ is a orthogonal projection and $\BS_i(TM)^\perp = {\U_i \op \V_i}$, where $\U_i := \ker \BT_i|_{\BS_i(TM)^\perp}$ and $\V_i := \left.\ker(\id -\BT_i)\right|_{\BS_i(TM)^\perp}$.
	
	By another side, using the third equation \eqref{eq: RST},
	\[\ker[\BT_i(\id-\BT_i)] = \ker \BS_i\BS_i^\t = \ker\BS_i^\t = [\BS_i(TM)]^\perp.\]
	Besides, $\ker \BT_i$ and $\ker(\id-\BT_i)$ are subsets of $\ker[\BT_i(\id-\BT_i)] = \BS_i(TM)^\perp$, therefore $\ker \BT_i = \ker\BT_i|_{\BS_i(TM)^\perp}$ and $\ker(\id-\BT_i) = \ker(\id-\BT_i)|_{\BS_i(TM)^\perp}$. \cqd

	\noindent \textbf{\textsl{(II)}:} If $\xi \in \G \left(\V_i \cap \mathcal{N}_1^\perp \right)$, then
	\[\nperp_X \xi - \BT_i \nperp_X \xi = \nperp_X \BT_i \xi - \BT_i \nperp_X \xi \stackrel{\eqref{eq: derivada T}}{=} 0, \ \forall X \in \G(TM).\]
	Hence $\nperp_X \xi \in \ker(\id - \BT_i) = \V_i$.
	
	If $\xi \in \U_i \cap \mathcal{N}_1^\perp$, then
	\[-\BT_i \nperp_X \xi = \nperp_X \BT_i \xi - \BT_i \nperp_X \xi \stackrel{\eqref{eq: derivada T}}{=} 0, \ \forall X \in \G(TM).\]
	Therefore $\nperp_X \xi \in \ker \BT_i = \U_i$. \cqd

	\noindent \textbf{\textsl{(III)}:} Let $\xi \in \U_i = \ker \BT_i$ and $\z \in \V_i = \ker(\id - \BT_i)$ . From $\BS_i(TM)^\perp = \mathcal{U}_i \op \mathcal{V}_i$, it follows that
	\begin{align*}
		& (\id - \pi_i) \xi = \xi - \BS_i^\t \xi - \BT_i \xi = \xi; \\
		& \pi_i \z = \BS_i^\t \z + \BT_i \z = \z.
	\end{align*}
	
	Therefore $\pi_i$ fixes the points of $\V_i$, and $\id-\pi_i$ fixes the points of $\U_i$.
\end{prova}

\begin{teo}\label{teo: reducao}
	If $f\colon M^m\to \hO$ is a isometric immersion, then the following claims are equivalent:
	\begin{enum}
		\item The codimension of $f$ is reduced by $\bar n$ at the $i$th coordinate.
		\item There is a vector sub-bundle $L^{\bar n} \subset T^\perp M$ such that $L^{\bar n} \subset \V_i \cap \mathcal{N}_1^\perp$ and $L^{\bar n}$ is parallel in the normal connection of $f$.
	\end{enum}

\end{teo}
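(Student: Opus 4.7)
The plan is to pass to $F = \imath \circ f \colon M \to \RN$ and translate ``$L$ parallel in the normal bundle of $f$'' into ``$\imath_* L$ is a constant subspace of $\RN$'' using flatness of $\ntil$. Throughout write $f_i := \pi_i \circ f$.

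For \textsl{(I)} $\Rightarrow$ \textsl{(II)}, write $\O_{k_i}^{m_i} = V_i \cap \o{i}$ for a fixed linear subspace $V_i \subset \R^{N_i}$ and define $L_x := \{0\} \x \cdots \x (V_i^\perp \cap \R^{N_i}) \x \cdots \x \{0\}$ sitting in the $i$-th slot, a constant $\bar n$-dimensional subspace of $\RN$. I would check in turn: $L_x \subset T^\perp_f M$ (because $\pi_i f_* X \in V_i$); $L \subset \mathcal N_1^\perp$ (because the totally geodesic inclusion forces $\af{X}{Y}$ to take values in $(\jmath_1 \x \cdots \x \jmath_\ell)_* T^\perp_{\tilde f} M$, which is orthogonal to $L$); $L \subset \V_i$ (decomposing $\pi_i \xi = \xi$ into $\BS_i^\t \xi = 0$ tangentially and $\BT_i \xi = \xi$ normally); and finally $L$ is parallel (because $\imath_* \xi$ is a constant map into $\RN$, so $\ntil_X \imath_* \xi = 0$, whose normal-to-$F$ component is $\imath_* \nperp_X \xi$).

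For \textsl{(II)} $\Rightarrow$ \textsl{(I)}, the key step is to show $\imath_* L$ is parallel in $\RN$. From $L \subset \mathcal N_1^\perp$ I get $A^f_\xi = 0$; from $L \subset \V_i \subset \BS_i(TM)^\perp$ (Lemma \ref{lem: reducao}) I get $\BS_i^\t \xi = 0$ and $\BT_i \xi = \xi$, so $\pi_i \xi = \xi$ by \eqref{eq: K L}. Orthogonality of the product slots then forces $\pi_j \xi = 0$ and hence $\BS_j^\t \xi = 0$ for every $j \ne i$. Substituting into
\[
\ntil_X \imath_* \xi = -F_* A^f_\xi X + \imath_* \nperp_X \xi + \sum_{j=1}^\ell \interno{\BS_j X}{\xi}\nu_j
\]
(which follows from \eqref{nbarperpxi} combined with the Gauss formula for $F$) kills the first and third summands and leaves $\ntil_X \imath_* \xi = \imath_* \nperp_X \xi \in \imath_* L$. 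Flatness of $\ntil$ then yields a constant $W \subset \RN$, lying entirely in the $i$-th slot, with $\imath_*(L_x) = W$ on each component of $M$; write $W = \{0\} \x \cdots \x W_i \x \cdots \x \{0\}$.

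Extracting the reduction is then geometric bookkeeping. If $k_i \ne 0$, the inclusion $L_x \subset T_{f_i(x)} \o{i} = \{f_i(x)\}^\perp$ forces $f_i(M) \subset W_i^\perp \cap \o{i}$, a totally geodesic submanifold of dimension $m_i = n_i - \bar n$. If $k_i = 0$, the orthogonality $\interno{\pi_i f_* X}{\xi_i} = \interno{f_* X}{\xi} = 0$ shows that $\pi_{W_i} \circ f_i$ has zero differential, so $f_i$ lands in an affine $m_i$-plane of $\R^{n_i}$. In either case $f(M) \subset \o{1}\x\cdots\x\O_{k_i}^{m_i}\x\cdots\x\o{\ell}$. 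I expect the main obstacle to be the algebraic point that $\xi \in \V_i$ propagates into $\pi_j \xi = 0$ for all $j \ne i$; once that is secured, the rest is a direct reading of \eqref{nbarperpxi} and flatness of the ambient Euclidean connection.
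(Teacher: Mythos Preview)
Your proposal is correct and follows essentially the same route as the paper: both directions pass to $F = \imath \circ f$, identify $\imath_* L$ with a constant subspace of $\RN$ supported in the $i$-th factor, and then read off the reduction from $f_i(M) \subset W_i^\perp$ (handling $k_i = 0$ and $k_i \ne 0$ separately). The point you flag as the main obstacle---that $\xi \in \V_i$ forces $\pi_j \xi = 0$ for $j \ne i$---is immediate from $\pi_j\pi_i = 0$ once Lemma~\ref{lem: reducao}\textsl{(III)} gives $\pi_i \xi = \xi$, and this is exactly how the paper cancels the $\nu_j$-terms as well.
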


\begin{prova}
	\par \noindent \textbf{\textsl{(I)} $\Rightarrow$ \textsl{(II)}:} Because the codimension of $f$ is reduced by $\bar n$ at the $i$th coordinate, there is a totally geodesic submanifold $\O_{k_i}^{m_i} \subset \o{i}$, with $n_i - m_i = \bar n$, and such that $f(M) \subset \o{1}\x\cdots\x\O_{k_i}^{m_i}\x\cdots \x\o{\ell}$.
	
	Hence, there is a isometric immersion $\tilde{f} \colon M^m \to \o{1}\x\cdots\x\O_{k_i}^{m_i}\x\cdots \x\o{\ell}$ such that $f = (\id \x \cdots \x\jmath_i\x \cdots \x \id) \circ \tilde{f}$, where $\jmath_i \colon \O_{k_i}^{m_i} \hookrightarrow \o{i}$ is the totally geodesic inclusion and each $\id \colon \o{j} \to \o{j}$ is the identity map.

	Let $L \subset T_f^\perp M$ be the vector sub-bundle whose fiber in $x$ is given by
	\begin{align*}
		& L(x) := \left(( \id \x \cdots \x\jmath_i \x  \cdots \x\id )_*  T_{\tilde{f}(x)}\o{1} \x\cdots\x \O_{k_i}^{m_i}\x \cdots\x\o{\ell}\right)^\perp = \\
		&= \{(0, \cdots, 0)\} \x \left( T_{\tilde{f}_i(x)}\O_{k_i}^{m_i} \right)^\perp \x \{(0,\cdots, 0)\}.
	\end{align*}
	Obviously, $\dim L = n_i - m_i = \bar n$ and $\pi_i(L) = L$.

	From \eqref{eq: tensores composta}, we know that $\BS_i^f = (\id \x \cdots \x \jmath_i \x \cdots \x \id)_*\BS_i^{\tilde f}$, thus
	\begin{align*}
		& \BS_i(TM) \subset (\id\x\cdots\x \jmath_i \x\cdots\x\id)_*T_{\tilde{f}} \o{1}\x\cdots\x \O_{k_i}^{m_i}\x\cdots\x\o{\ell} \Rightarrow \\
		& \Rightarrow L \subset \BS_i(TM)^\perp \ \Rightarrow L \subset \pi_i \left[ \BS_i(TM)^\perp \right].
	\end{align*}

	On the oder side, by Lemma \ref{lem: reducao}, $\BS_i(TM)^\perp = \U_i\op\V_i$ and $\V_i = \pi_i(\V_i) = \pi_i\left[ \BS_i(TM)^\perp \right]$. Hence $L \subset \V_i$.

	Because $(\id\x\cdots\x\jmath_i\x\cdots\x\id) \colon \o{1}\x\cdots\x\O_{k_i}^{m_i}\x\cdots\x\o{\ell} \to \hO$ is totally geodesic, $\mathcal{N}_1^f = (\id\x\cdots\x\jmath_i\x\cdots\x \id)_* \mathcal{N}_1^{\tilde{f}}$. Hence
	\begin{multline*}
		L(x) = \left[ (\id\x\cdots\x\jmath_i\x\cdots\x\id)_* T_{\tilde f(x)}\o{1}\x\cdots\x\O_{k_i}^{m_i}\x\cdots\x\o{\ell} \right]^\perp \subset \\
		\subset \left[\left(\id \x \cdots \x \jmath_i \x \cdots\x \id\right)_* \mathcal{N}_1^{\tilde f}(x) \right]^\perp = {\mathcal{N}_1^f(x)}^\perp.
	\end{multline*}
	Therefore $L \subset \V_i \cap {\mathcal{N}^f_1}^\perp$.

	Now, let $\xi \in \G(L)$ and
	\begin{multline*}
		\z = (\id \x\cdots\x\jmath_i\x\cdots\x\id)_* \tilde \z \in \\
		\in \G \left( (\id\x\cdots\x\jmath_i \x\cdots\x \id)_*T_{\tilde{f}} \o{1}\x\cdots\x\O_{k_i}^{m_i}\x\cdots\x\o{\ell} \right).
	\end{multline*}
	Thus
	\begin{multline*}
		\interno{\nperp_X \xi}{\z} = - \interno{\xi}{\nbar_X (\id \x \cdots \x\jmath_i\x\cdots\x \id)_* \tilde{\z}} = \\
		= -\interno{\xi}{(\id\x\cdots\x\jmath_i\x\cdots\x \id)_*\nbar_X \tilde{\z}} = 0.
	\end{multline*}
	Therefore $\nperp L \subset L$. \cqd

	\noindent\textbf{\textsl{(II)} $\Rightarrow$ \textsl{(I)}:} Let $\imath \colon \hO \hookrightarrow \RN$ be the canonical inclusion, $F := \imath \circ f$ and $\tilde{L} := \imath_*L$.

	\begin{afi}{$\tilde{L}$ is a constant subspace of $\RN$.}
		Let $\xi \in\G \left( L \right)$. Because $L \subset \V_i \cap \mathcal{N}_1^\perp$, then $A_\xi = 0$ and $\pi_i (\xi) = \xi$. Thus
		\begin{align*}
			& \ntil_X \imath_* \xi = \imath_* \nbar_X \xi + \ai{f_*X}{\xi} \stackrel{\text{Lemma \ref{lem: ai}}}{=} \\
			& = - F_* \cancel{A_\xi X} + \imath_* \nperp_X \xi + \sum_{j=1}^\ell\cancel{\interno{f_*X}{\pi_j \xi}}\nu_j = \imath_* \nperp_X \xi.
		\end{align*}

		But $L$ is parallel in the normal connexion of $f$, so $\ntil_X \imath_* \xi \in \tilde{L}$.
	\end{afi}

	\begin{afi}{$\pi_i \tilde{L} = \tilde{L}$.}
		
		Because $\pi_i(L) = L$ and $\pi_i \imath_* = \imath_* \pi_i$, the claim is true.
	\end{afi}

	Lets consider the projection $P_i \colon \R^{N_1}\x\cdots\x\R^{N_\ell} \to \R^{N_i}$, hence $\pi_i(x) = (0 \cdots, P_i(x), \cdots, 0)$. Lets also define $\bar L_i := P_i\left(\tilde L^\perp\right) = \left[P_i\left(\tilde L \right)\right]^\perp$ and $F_i := P_i \circ f$.

	\begin{afi}{$f(M) \subset \R^{N_1} \x \cdots \x \left( \bar L_i + F_i(x_0)\right) \x \cdots \x \R^{N_\ell}$, where $x_0 \in M$ is any fixed point.}
		It is enough to prove that $F_i(M) \subset \bar L_i + F_i(x_0)$. But $f_* X \perp \tilde L$, then, if $\xi \in \tilde L$ is a fixed vector, we have that
		\[X\interno{F_i}{P_i \xi} = \interno{P_i f_*X}{P_i \xi} = \interno{\pi_if_* X}{\pi_i \xi} = \interno{f_* X}{\pi_i \xi} = \interno{f_*X}{\xi} = 0.\]
		Thus $\interno{F_i(x)}{P_i \xi}$ is constant in $M$.
		
		Now, if $\{\xi_1, \cdots, \xi_{\bar n}\}$ is an orthogonal base of $\tilde L$, then $\{P_i\xi_1, \cdots, P_i\xi_{\bar n}\}$ is an orthogonal base of $P_i \left(\tilde L\right)$. Thus, $\interno{F_i(x)}{P_i \xi_j} = \interno{F_i(x_0)}{P_i \xi_j}$, where $x_0$ is a fixed point. Hence $F_i(x) - F_i(x_0) \perp P_i\left(\tilde L\right)$. Therefore $F_i(x) - F_i(x_0) \in \left[P_i\left(\tilde L\right)\right]^\perp = \bar L_i$.
	\end{afi}
	
	Now we have two cases to consider: the case $k_i \ne 0$ and the case $k_i =0$. If $k_i = 0$, then $\R^{N_i} = \E^{n_i}$ and $F_i(M) \subset \bar{L}_i + F_i(x_0)$. Hence, if we identify $\bar{L}_i + F_i(x_0)= \E^{m_i}$, then $F_i(M) \subset \E^{m_i}$, with $m_i = n_i - \bar n$.

	If $k_i\ne 0$, since $\nu_i = k_i (\pi_i \circ F) \perp \tilde L$, then $F_i(x_0) \perp P_i\left( \tilde L(x_0) \right) = P_i\left( \tilde L\right)$, hence $F_i(x_0) \in \bar{L}_i$. Thus $F_i(M) \subset \bar{L}_i \cap \o{i}$.
	
	If $k_i > 0$, then $\o{i} = \ss{i}$ and $\bar{L}_i \cap \o{i} = \S_{k_i}^{m_i} = \O_{k_i}^{m_i}$, where $m_i = n_i - \bar n$. If $k_i < 0$, then $F_i(x_0)$ is a timelike and $\bar L_i$ is also timelike, cause $F_i(x_0) \in \bar L_i$. Thus $\bar L_i \cap \o{i} = \bar L_i \cap \h{i} = \H_{k_i}^{m_i} = \O_{k_i}^{m_i}$, where $m_i = n_i - \bar n$.
	
	Our conclusion is: $f(M) \subset \o{1} \x \cdots \x \O_{k_i}^{m_i}\x \cdots \x \o{n} \subset \hO$.
\end{prova}

Here we want to remark that, if the codimension at the $i$th coordinate of $f \colon M \to \hO$ can be reduced by $\bar n_i$ and the codimension at the $j$th coordinate can be reduced by $\bar n_j$, then
\[f(M) \subset \o{1} \x \cdots \x \O_{k_i}^{n_i-\bar n_i}\x \cdots \x \O_{k_j}^{n_j-\bar n_j} \x \cdots \x \o{n},\]
for some totally geodesic submanifolds $\O_{k_i}^{n_i-\bar n_i} \subset \o{i}$ and $\O_{k_j}^{n_j-\bar n_j} \subset \o{j}$.

\begin{cor}\label{cor: reducao}
	Let $f\colon M^m\to \hO$ be an isometric immersion and lets suppose that $\V_i \cap \mathcal{N}_1^\perp$ be a vector sub-bundle of $T^\perp M$ with dimension $\bar n$. If $\nperp \left(\V_i \cap \mathcal{N}_1^\perp \right) \subset \mathcal{N}_1^\perp$, then the codimension of $f$ is reduced by $\bar n$ at the $i$th coordinate.
\end{cor}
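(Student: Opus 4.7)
The plan is to realize this corollary as a direct application of Theorem \ref{teo: reducao} with the choice $L^{\bar n} := \V_i \cap \mathcal{N}_1^\perp$. By hypothesis, $L^{\bar n}$ is already a vector sub-bundle of $T^\perp M$ of dimension $\bar n$, and trivially $L^{\bar n} \subset \V_i \cap \mathcal{N}_1^\perp$, so the only thing I would need to verify is that $L^{\bar n}$ is parallel in the normal connection of $f$.

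To get parallelism, I would combine two facts. First, item \textsl{(II)} of Lemma \ref{lem: reducao} already tells us that $\nperp(\V_i \cap \mathcal{N}_1^\perp) \subset \V_i$. Second, the hypothesis of the corollary is precisely that $\nperp(\V_i \cap \mathcal{N}_1^\perp) \subset \mathcal{N}_1^\perp$. Intersecting these two inclusions gives
\[
\nperp L^{\bar n} = \nperp (\V_i \cap \mathcal{N}_1^\perp) \subset \V_i \cap \mathcal{N}_1^\perp = L^{\bar n},
\]
which is exactly the statement that $L^{\bar n}$ is a parallel sub-bundle of the normal bundle.

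With parallelism in hand, Theorem \ref{teo: reducao} \textsl{(II)} $\Rightarrow$ \textsl{(I)} applies directly and yields that the codimension of $f$ is reduced by $\bar n$ at the $i$th coordinate, which is precisely the conclusion. Since Lemma \ref{lem: reducao} already does the algebraic work (invariance of $\BS_i(TM)^\perp$ under $\BT_i$ and the corresponding orthogonal splitting) and Theorem \ref{teo: reducao} does the geometric work (producing the totally geodesic submanifold of $\o{i}$ containing $f(M)$), the only real content here is the trivial set-theoretic observation that combining the two inclusions above lands us in $L^{\bar n}$ itself. I do not expect any genuine obstacle; the proof should be essentially a one-line reduction plus the invocation of the theorem.
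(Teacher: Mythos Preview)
Your proposal is correct and matches the paper's own proof essentially line for line: the paper also sets $L := \V_i \cap \mathcal{N}_1^\perp$, invokes item \textsl{(II)} of Lemma \ref{lem: reducao} to get $\nperp L \subset \V_i$, combines it with the hypothesis $\nperp L \subset \mathcal{N}_1^\perp$ to conclude parallelism, and then applies Theorem \ref{teo: reducao}.
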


\begin{proof}
	By Theorem \ref{teo: reducao}, it is enough to show that $L := \V_i \cap \mathcal{N}_1^\perp$ is parallel in the normal connexion of $f$. On the other side, we know by \textsl{(II)} of Lemma \ref{lem: reducao} that $\nperp L \subset \V_i$. But $\nperp L \subset \mathcal{N}_1^\perp$, therefore $L$ is parallel.
\end{proof}

\begin{teo} \label{teorema: reducao de codimension 2}
	Let $f\colon M^m \to \hO$ be an isometric immersion and lets suppose that $\V_i \cap \mathcal{N}_1^\perp$ be a vector sub-bundle of $T_f^\perp M$. Thus $\nperp \left(\V_i\cap \mathcal{N}_1^\perp \right) \subset \mathcal{N}_1^\perp$ if, and only if,
	\begin{enum}
		\item $\left(\nperp_Z \mathcal{R}^\perp \right)(X,Y,\xi) = 0$, for all $\xi \in\V_i \cap \mathcal{N}_1^\perp$,  and
		\item $\nperp \left(\V_i \cap \mathcal{N}_1^\perp\right) \subset \{\eta\}^\perp$, where $\eta$ is the mean curvature vector of $f$.
	\end{enum}
\end{teo}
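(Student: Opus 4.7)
The overall plan is to let $L := \V_i \cap \mathcal{N}_1^\perp$ and exploit two pointwise vanishings for $\xi \in L$: first, $A_\xi = 0$ (since $\xi \in \mathcal{N}_1^\perp$); second, $\BS_j^\t\xi = 0$ for every $j$ (for $j = i$, because $\V_i \subset \BS_i(TM)^\perp = \ker\BS_i^\t$; for $j \neq i$, because $\BS_j^\t\xi = \BS_j^\t\BT_i\xi = -\BR_j\BS_i^\t\xi = 0$ by transposing \eqref{eq: RST2}). Together these make the Ricci equation \eqref{eq: Ricci} collapse to $\mathcal{R}^\perp(X,Y)\xi = 0$ for every $\xi \in L$. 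For the ($\Rightarrow$) direction, combine the hypothesis $\nperp L \subset \mathcal{N}_1^\perp$ with Lemma \ref{lem: reducao}(II) (which already gives $\nperp L \subset \V_i$) to conclude $\nperp L \subset L$; so $L$ is parallel. Differentiating the identity $\mathcal{R}^\perp(\cdot,\cdot)\xi = 0$ along a section of $L$ yields $(\nperp_Z\mathcal{R}^\perp)(X,Y,\xi) = -\mathcal{R}^\perp(X,Y)\nperp_Z\xi = 0$, since $\nperp_Z\xi \in L$ too; this is (I). Because $\eta \in \mathcal{N}_1$, we have $\nperp L \subset \mathcal{N}_1^\perp \subset \{\eta\}^\perp$, which is (II).

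For the ($\Leftarrow$) direction, fix $\xi \in \Gamma(L)$ and set $\zeta := \nperp_Z\xi$. Lemma \ref{lem: reducao}(II) gives $\zeta \in \V_i$. Combining $\mathcal{R}^\perp(X,Y)\xi = 0$ with (I) yields $\mathcal{R}^\perp(X,Y)\zeta = 0$; since $\zeta \in \V_i$ kills the $\BS_j$-terms in Ricci, this reduces to $\al{X}{A_\zeta Y} = \al{A_\zeta X}{Y}$, whose inner product with an arbitrary $\eta' \in T^\perp M$ gives $[A_\zeta, A_{\eta'}] = 0$. Separately, Codazzi \eqref{eq: Codazzi} applied on $\xi \in L$ (using $A_\xi = 0$ and $\BS_j^\t\xi = 0$) produces $A_{\nperp_X\xi}Y = A_{\nperp_Y\xi}X$, so $B(X,Y) := A_{\nperp_X\xi}Y$ is symmetric in $X,Y$. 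Combined with self-adjointness of each $A_{\nperp_X\xi}$, the trilinear form $\beta(X,Y,Z) := \interno{B(X,Y)}{Z}$ is totally symmetric. Hypothesis (II) gives $\sum_i \beta(X,e_i,e_i) = m\interno{\nperp_X\xi}{\eta} = 0$; by total symmetry every partial trace vanishes, and in particular $\sum_i B(e_i,e_i) = 0$.

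The crux — and the step I expect to be the main obstacle — is the algebraic squeeze that concludes $B \equiv 0$. Define $K(X,Y,Z,W) := \interno{B(X,Y)}{B(Z,W)}$. Using $[A_{\nperp_X\xi}, A_{\nperp_Z\xi}] = 0$ together with self-adjointness,
\[\interno{B(Z,Y)}{B(X,W)} = \interno{A_{\nperp_Z\xi}Y}{A_{\nperp_X\xi}W} = \interno{A_{\nperp_X\xi}A_{\nperp_Z\xi}Y}{W} = \interno{A_{\nperp_Z\xi}A_{\nperp_X\xi}Y}{W} = \interno{B(X,Y)}{B(Z,W)},\]
so $K$ is symmetric under swapping its first and third arguments; together with the obvious symmetries in $(X,Y)$ and in $(Z,W)$ this generates all of $S_4$, making $K$ totally symmetric. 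Therefore
\[\sum_{i,j}\|B(e_i,e_j)\|^2 = \sum_{i,j}K(e_i,e_j,e_i,e_j) = \sum_{i,j}K(e_i,e_i,e_j,e_j) = \Bigl\|\sum_i B(e_i,e_i)\Bigr\|^2 = 0,\]
forcing $B \equiv 0$, hence $A_{\nperp_Z\xi} = 0$ and $\nperp_Z\xi \in \mathcal{N}_1^\perp$. I anticipate the subtle point will be recognizing that commutation with every shape operator (from (I)) and a single trace condition (from (II)) appear too weak individually, but when organized through the totally symmetric 4-tensor $K$ they pinch $B$ to zero.
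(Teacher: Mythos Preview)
Your proof is correct and follows the paper's route almost exactly: both directions share the same skeleton (vanishing of $A_\xi$ and of all $\BS_j^\t\xi$ on $L$, collapse of Ricci to $\mathcal{R}^\perp(X,Y)\xi=0$, differentiation to reach $\mathcal{R}^\perp(X,Y)\nperp_Z\xi=0$, and Codazzi to obtain the symmetry $A_{\nperp_X\xi}Y=A_{\nperp_Y\xi}X$). The only genuine divergence is the final linear-algebra step in the $(\Leftarrow)$ direction. The paper uses $[A_{\nperp_Z\xi},A_{\nperp_W\xi}]=0$ to simultaneously diagonalize the family $\{A_{\nperp_X\xi}\}$ in an orthonormal frame $\{E_k\}$, then reads off from $A_{\nperp_{E_i}\xi}E_j=A_{\nperp_{E_j}\xi}E_i$ that all off-diagonal eigenvalues vanish and from (II) that the diagonal ones do too. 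You instead package the same information into the totally symmetric $4$-tensor $K(X,Y,Z,W)=\interno{B(X,Y)}{B(Z,W)}$ and kill $B$ via $\sum_{i,j}\|B(e_i,e_j)\|^2=\|\sum_i B(e_i,e_i)\|^2=0$. Your argument is basis-free and avoids the diagonalization step; the paper's is perhaps more transparent about where each hypothesis enters. Either way, the content is the same.
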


\begin{prova}
	\par \noindent ($\Rightarrow$): Since $\mathcal{N}_1^\perp \subset \{\eta\}^\perp$ and $\nperp \left(\V_i \cap\mathcal{N}_1^\perp \right) \subset \mathcal{N}_1^\perp$, then $\nperp \left(\V_i \cap \mathcal{N}_1^\perp\right) \subset \{\eta\}^\perp$. We still have to show item \textsl{(I)}.

	By item \textsl{(I)} of Lemma \ref{lem: reducao}, $\BS_i(TM)^\perp = \U_i \op \V_i$. So, if $\xi \in \V_i \cap \mathcal{N}_1^\perp$ and $X, Y \in TM$, then, by Ricci equation \eqref{eq: Ricci},
	\begin{equation}\label{eq: R perp}
		\mathcal{R}^\perp(X,Y)\xi = \al{X}{A_\xi Y} - \al{A_\xi X}{Y} + \sum_{i=1}^\ell k_i \left( \BS_i X \wedge \BS_i Y \right) \xi = 0.
	\end{equation}
	Now, given $\xi \in \G\left( \V_i \cap \mathcal{N}_1^\perp \right)$, and because $\nperp_Z \xi \in \V_i \cap \mathcal{N}_1^\perp$, it holds that:
	\begin{multline*}
		\left(\nperp_Z \mathcal{R}^\perp \right)(X,Y,\xi) = \nperp_Z \mathcal{R}^\perp(X,Y) \xi - \mathcal{R}^\perp(\n_Z X, Y) \xi - \\
		- \mathcal{R}^\perp(X, \n_Z Y) \xi - \mathcal{R}^\perp(X, Y) \nperp_Z \xi = 0. \ \text{\cqd}
	\end{multline*}

	\noindent ($\Leftarrow$): Lets suppose now that we have items \textsl{(I)} and \textsl{(II)}. We have to show that $\nperp \left( \V_i \cap \mathcal{N}_1^\perp \right) \subset \mathcal{N}_1^\perp$.

	By the same arguments used above, equation \eqref{eq: R perp} holds. So, let $\xi \in \G \left(\V_i \cap \mathcal{N}_1^\perp\right) \subset \G \left(\{\eta\}^\perp\right)$, then
	\begin{align*}
		& 0 = \left( \nperp_Z \mathcal{R}^\perp \right)(X,Y,\xi) = \cancel{\nperp_Z \mathcal{R}^\perp (X,Y)\xi} - \cancel{\mathcal{R}^\perp\left(\n_Z X,Y\right) \xi} - \\
		& \quad - \cancel{\mathcal{R}^\perp\left(X, \n_Z Y\right)\xi} - \mathcal{R}^\perp(X,Y)\nperp_Z\xi = - \mathcal{R}^\perp(X,Y)\nperp_Z\xi.
	\end{align*}
	
	On the other side, as a result of Lemma \ref{lem: reducao}, $\nperp_X \xi \in \V_i \subset \BS_i(TM)^\perp$. As a consequence, and by Ricci's equation \eqref{eq: Ricci},
	\begin{multline*}
	0 = \mathcal{R}^\perp(X,Y)\nperp_Z\xi = \al{X}{A_{\nperp_Z \xi} Y} - \al{A_{\nperp_Z \xi}X}{Y} + \\
	+ \sum_{i=1}^\ell k_i \cancel{\left( \BS_i X \wedge \BS_i Y \right) \nperp_Z} \xi.
	\end{multline*}
	Hence, if $\z \in T^\perp M$, then
	\[0 = \interno{\al{X}{A_{\nperp_Z \xi} Y}}{\z} - \interno{\al{A_{\nperp_Z \xi}X}{Y}}{\z} = \interno{\left[A_{\nperp_Z \xi}, A_\z \right]X}{Y}.\]
	
	Therefore $\left[A_{\nperp_Z \xi}, A_{\nperp_W \xi} \right] = 0$, for all $W, Z \in \G(TM)$. Hence for each $x \in M$ there is an orthonormal base $\left\{E_1(x), \cdots, E_m(x)\right\}$ of $T_x M$ that diagonalizes all elements of the set $\set{A_{\nperp_X \xi}}{X \in T_x M}$.

	Since $\left[A_{\nperp_Z \xi}, A_{\nperp_W \xi} \right] = 0$, for all $W, Z \in \G(TM)$, then, for each $x \in M$ there is an orthonormal base $\left\{E_1(x), \cdots, E_m(x)\right\}$ of $T_x M$ that diagonalizes all elements of the set $\set{A_{\nperp_X \xi}}{X \in T_x M}$.

	Lets show that $\nperp_X \xi \in \V_i \cap \mathcal{N}_1^\perp$. For this, let $\ell_{k,i}$ be the eigenvalue of $A_{\nperp_{E_k} \xi}$ associated with the eigenvector $E_i(x)$, for each pair $i,k \in \{1, \cdots, m\}$. Thus
	\[\interno{\al{E_i}{E_j}}{\nperp_{E_k} \xi} = \interno{A_{\nperp_{E_k}\xi}E_i}{E_j} = \ell_{k,i}\interno{E_i}{E_j} = \ell_{k,i}\delta_{ij}.\]

	Since $\xi \in \Gamma \left(\V_i \cap \mathcal{N}_1^\perp \right)$ and $\V_i \subset \BS_i(TM)^\perp$, it follows from the second equation in \eqref{eq: Codazzi} and that
	\begin{align*}
		& 0 = (\n_Y A)(X,\xi) - (\n_X A)(Y, \xi) = \\
		& = \n_Y \cancel{A_\xi X} - \cancel{A_\xi \n_Y X} - A_{\nperp_Y \xi} X - \n_X \cancel{A_\xi Y} + \cancel{A_\xi \n_X Y} + A_{\nperp_X \xi} Y \Rightarrow \\
		& \Rightarrow A_{\nperp_X \xi} Y = A_{\nperp_Y \xi} X, \ \text{for any $X, Y \in \G(TM)$.}
	\end{align*}
	So, $A_{\nperp_{E_i} \xi} E_j = A_{\nperp_{E_j} \xi} E_i \Rightarrow \ell_{i,j} E_j = \ell_{j,i} E_i \Rightarrow \ell_{i,j} = 0$, if $i \ne j$. Consequently
	\[\interno{\al{E_i}{E_j}}{\nperp_{E_k} \xi} =
	\begin{cases}
		0, &\text{if $i \ne j$ or $i \ne k$,} \\
		\ell_{i,i}, &\text{if $i=j=k$}.
	\end{cases}\]

	On the other side, given that $\nperp \left(\V_i \cap \mathcal{N}_1^\perp\right) \subset \{\eta\}^\perp$, $\interno{\al{E_i}{E_i}}{\nperp_{E_i}\xi} = \sum\limits_{j=1}^{m} \interno{\al{E_j}{E_j}}{\nperp_{E_i}\xi} = m \interno{\eta}{\nperp_{E_i}\xi} = 0$. Therefore $\nperp_{E_i} \xi \in \mathcal{N}_1^\perp$.
\end{prova}

	\section{A Bonnet theorem for isometric immersions in $\XO$}
		\begin{teo}\label{Bonnet}
	Let $\tau_i : = \tau(k_i)$, $\rho := \tau_1 + \cdots + \tau_\ell$ and $m' := \sum\limits_{i=1}^\ell n_i -m$.

	\begin{enum}
		\item \textbf{Existence:} Let $M^m$ be a connected and simply connected riemannian manifold and let $\cE$ be a vector bundle with dimension $m'$ and index $\rho$ on $M$ with compatible connection $\n^\cE$, curvature tensor $\mathcal{R}^\cE$ and symmetric tensor $\alpha^\cE \colon TM \oplus TM \to \cE$. Lets also consider, for each $i \in \{1, \cdots, \ell\}$, the tensors $\BR_i \colon TM \to TM$, $\BS_i \colon TM \to \cE$ and $T_i \colon \cE \to \cE$, with $\BR_i$ and $\BT_i$ symmetric. For each $\eta\in \cE$, let $A_\eta^\cE \colon TM \to TM$ be given by $\interno{A_\eta^\cE X}{Y} = \interno{\alpha^\cE(X,Y)}{\eta}$. With these assumptions, if equations \eqref{somas} until \eqref{eq: Ricci} hold, then there is an isometric immersion $f \colon M \to \O$ and an vector bundle isometry $\Phi \colon \cE \to T^\perp M$ such that
		\begin{align*}
			& \alpha^f = \Phi \alpha^\cE,  && \nperp \Phi = \Phi \n^\cE, \\
			& \pi_i \circ f_* = f_* \BR_i + \Phi \BS_i, && \left.\pi_i\right|_{T^\perp M} = f_* \BS_i^\t \Phi^{-1} + \Phi \BT_i \Phi^{-1}.
		\end{align*}
		
		\item \textbf{Uniqueness:} Let $f, g \colon M \to \O$ be isometric immersions such that $\BR_i^f= \BR_i^g$, for every $i \in \{1, \cdots, \ell\}$. Lets suppose that there is a vector bundle isometry $\Phi \colon T_f^\perp M \to T_g^\perp M$ such that
		\begin{align*}
			& \Phi \alpha_f = \alpha_g, && \Phi {^f\nperp} = {^g\nperp} \Phi, \\
			& \Phi \BS_i^f = \BS_i^g, && \Phi\BT_i^f = \BT_i^g\Phi, \ \forall i \in\{1, \cdots, \ell\}.
		\end{align*}
		With the above conditions, there is an isometry $\varphi \colon \hO \to \hO$ such that $\varphi \circ f = g$ and $\left.\varphi_*\right|_{T_f^\perp M} = \Phi$.		
	\end{enum}
\end{teo}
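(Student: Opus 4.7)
The plan is to reduce both parts to the classical Bonnet theorem for isometric immersions into $\RN$, using the lift $F = \imath \circ f$ and the formulas from Section 2.3 that relate the geometry of $F$ to that of $f$ together with the auxiliary normal fields $\nu_i$.

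\textbf{Existence.} First I would enlarge $\cE$ to a bundle $\hat\cE := \cE \op \spa\{\nu_1,\ldots,\nu_\ell\}$ on $M$, where the formal sections $\nu_i$ are declared orthogonal to $\cE$ and to one another, with $\interno{\nu_i}{\nu_i} = k_i$ for $i \in J$ (while the $\nu_i$ with $k_i = 0$ are suppressed), so that $\hat\cE$ has dimension $N-m$ and matches the model provided by Lemma~\ref{lem: aF} and \eqref{eq: nui}--\eqref{nbarperpxi}. On $\hat\cE$ I would then define candidate second fundamental form and connection for an immersion into $\RN$ by
\begin{align*}
\alpha^{\hat\cE}(X,Y) &:= \alpha^\cE(X,Y) + \textstyle\sum_i \interno{\BR_i X}{Y}\,\nu_i, \\
\hat\nabla^\perp_X \eta &:= \nabla^\cE_X \eta + \textstyle\sum_i \interno{\BS_i X}{\eta}\,\nu_i \ (\eta \in \cE), \quad \hat\nabla^\perp_X \nu_i := -k_i \BS_i X, \\
A^{\hat\cE}_{\nu_i} &:= k_i\BR_i, \qquad A^{\hat\cE}_\eta := A^\cE_\eta.
\end{align*}
The first computational step is to verify that this data satisfies the Gauss, Codazzi and Ricci equations with \emph{flat} ambient curvature; this follows from the hypotheses \eqref{somas}--\eqref{eq: Ricci} together with the algebraic identities \eqref{eq: RST}--\eqref{eq: RST2}, by a direct calculation mirroring the derivations in Section 2.3. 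The classical Bonnet theorem in $\RN$ then yields an isometric immersion $F \colon M \to \RN$ and a bundle isometry $\hat\Phi \colon \hat\cE \to T_F^\perp M$ intertwining the above data with $\alpha^F$ and $\bar\nabla^\perp$.

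The main obstacle, and the step I expect to be most delicate, is showing that $F(M)$ actually lies in $\imath(\hat\O)$ with $\hat\Phi(\nu_i) = -k_i\,\pi_i \circ F$ (which then forces $\|\pi_i F\|^2 = 1/k_i$). My plan is to consider, for each $i \in J$, the $\RN$-valued map $w_i := \hat\Phi(\nu_i) + k_i\, \pi_i F$. Differentiating $w_i$ in an arbitrary direction $X \in TM$, the Weingarten formula for $F$, together with the defining relations of $\hat\nabla^\perp \nu_i$ and $A^{\hat\cE}_{\nu_i}$, cancel against $k_i \pi_i F_* X$ and yield $\tilde\nabla_X w_i = 0$. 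Hence each $w_i$ is a constant vector in $\RN$; after translating $F$ by $\sum_{i \in J} w_i/k_i\cdot e_i$ (a freedom left by the classical Bonnet theorem) we may assume $w_i \equiv 0$ for $i \in J$, so that $\pi_i F = -\hat\Phi(\nu_i)/k_i$ has squared norm $1/k_i$ as needed. Setting $f := \imath^{-1}\circ F$ and $\Phi := \hat\Phi|_\cE$, the four relations in the statement then follow from the definitions of $\alpha^{\hat\cE}$ and $\hat\nabla^\perp$ combined with \eqref{eq: K L}.

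\textbf{Uniqueness.} Given $f, g$ and $\Phi$ as in the statement, I would lift to $F := \imath \circ f$ and $G := \imath \circ g$, and extend $\Phi$ to a bundle isometry $\hat\Phi \colon T_F^\perp M \to T_G^\perp M$ by setting $\hat\Phi(\imath_*\xi) := \imath_*\Phi\xi$ on $\imath_*T_f^\perp M$ and $\hat\Phi(\nu_i^F) := \nu_i^G$ on the complementary span. The formulas \eqref{eq: nui}, \eqref{nbarperpxi} and Lemma~\ref{lem: aF}, combined with the five intertwining hypotheses on $\Phi$ and $\BR_i^f = \BR_i^g$, show that $\hat\Phi$ identifies $\alpha^F$ with $\alpha^G$ and $^F\bar\nabla^\perp$ with $^G\bar\nabla^\perp$. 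The uniqueness part of the classical Bonnet theorem therefore supplies an affine isometry $\tilde\varphi(x)=Ax+b$ of $\RN$ with $\tilde\varphi\circ F = G$ and $A|_{T_F^\perp M} = \hat\Phi$. Since $A\nu_i^F = \nu_i^G$ translates into $A(\pi_i F) = \pi_i G$ for each $i\in J$, and $A F_*TM$ spans what is complementary to the $\nu_i^G$, a short linear-algebra check shows that $A$ preserves the product decomposition $\RN = \prod \R^{N_i}$; the residual translation $b$ may have nontrivial component only on factors with $k_i=0$. Consequently $\tilde\varphi$ carries $\hat\O$ into $\hat\O$ isometrically, and $\varphi := \tilde\varphi|_{\hat\O}$ is the desired isometry with $\varphi_*|_{T_f^\perp M} = \Phi$.
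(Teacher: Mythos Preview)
Your outline for the uniqueness part matches the paper's argument closely and is essentially correct; the ``short linear-algebra check'' that $A$ commutes with each $\pi_i$ does require the full hypotheses on $\BR_i,\BS_i,\BT_i$ (not just $A\nu_i^F=\nu_i^G$), but the paper carries this out exactly as you suggest.

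The existence argument, however, has a genuine gap. After the classical Bonnet theorem hands you $F\colon M\to\R_t^N$ and $\tilde\Phi\colon\hat\cE\to T_F^\perp M$, the ambient $\R_t^N$ carries no preferred product decomposition: the projections $\pi_i$ you invoke when writing $w_i=\tilde\Phi(\nu_i)+k_i\,\pi_iF$ are not yet defined. Computing $\ntil_X\tilde\Phi(\nu_i)$ from the Weingarten formula gives
\[
\ntil_X\tilde\Phi(\nu_i) \;=\; -F_*A^{\hat\cE}_{\nu_i}X+\tilde\Phi\,\hat\nabla^\perp_X\nu_i \;=\; -k_i\bigl(F_*\BR_iX+\tilde\Phi\,\BS_iX\bigr),
\]
and for this to equal $-k_i\,\pi_iF_*X$ you would need $F_*\BR_i+\tilde\Phi\,\BS_i=\pi_i\circ F_*$, which is precisely one of the conclusions you are trying to reach. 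The paper resolves this by introducing on $\mathcal{G}:=TM\oplus\hat\cE$ the abstract projectors
\[
P_i|_{TM}=\BR_i+\BS_i,\qquad P_i|_{\cE}=\BS_i^\t+\BT_i,\qquad P_i\nu_j=\delta_{ij}\nu_i,
\]
proving (from \eqref{somas}--\eqref{eq: derivada T}) that they are self-adjoint, satisfy $P_iP_j=\delta_{ij}P_i$, and are parallel for the natural connection on $\mathcal{G}$. Pushing them forward by the isometry $\hat\Phi\colon\mathcal{G}\to F^*T\R_t^N$ then yields a \emph{constant} orthogonal splitting $\R_t^N=\bigoplus_i\hat\Phi[P_i(\mathcal{G})]$, and it is \emph{these} factors that one declares to be $\R^{N_i}$ and whose projections are $\pi_i$. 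Only after this construction does the identity $\pi_i\circ\hat\Phi=\hat\Phi\circ P_i$ hold by definition, and your computation of $\ntil_X w_i=0$ becomes legitimate. Without the $P_i$-step the argument is circular.
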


\subsection{Existence}

Let $\bar \ell$ be the number of elements of the set $J = \set{i \in \{1, \cdots \ell\}}{k_i \ne 0}$ and let $\cF = \cE \oplus \Upsilon$ be the Whitney sum, where $\Upsilon$ is a semi-riemannian vector bundle on $M$, with dimension $\bar\ell$ and index $\rho$. We can choose an (local) orthogonal frame $\set{\nu_i}{ i \in J}$, of $\Upsilon$ with $\|\nu_i\|^2 = k_i$. If $k_i = 0$, that is, $i \notin J$, let $\nu_i = 0 \in \Upsilon$.

Now, we can define  a compatible connection in $\cF$ and a symmetric section $\alpha^{\mathcal F} \in \G \left( T^*M \otimes T^*M \otimes \cF \right)$ by
\begin{equation}\label{eq: F}
	\begin{aligned}
		& \n^{\mathcal F}_X \nu_i = -k_i \BS_i X, \quad \n^{\mathcal F}_X \xi = \n_X^{\mathcal E} \xi + \sum_{i=1}^\ell \interno{\BS_i X}{\xi} \nu_i \quad \text{and} \\
		& \alpha^{\mathcal F}(X,Y) = \alpha^{\mathcal E}(X,Y) + \sum_{i=1}^\ell \interno{\BR_i X}{Y} \nu_i,
	\end{aligned}
\end{equation}
for all $X,Y \in \G(TM)$ and all $\xi \in \G(\cE)$.

Now lets define, for each $\eta \in \G(\Upsilon)$, the shape operator $A^{\mathcal F}_\eta \colon TM \to TM$ by $\interno{A^{\mathcal F}_\eta X}{Y} = \interno{\alpha^{\mathcal F}(X,Y)}{\eta}$. So, if $\xi \in \G(\mathcal E)$, then
\begin{multline*}
	\interno{A^{\mathcal F}_\xi X}{Y} = \interno{\alpha^{\mathcal F}(X,Y)}{\xi} = \interno{\alpha^{\mathcal E}(X,Y) + \sum_{i=1}^\ell \interno{\BR_i X}{Y} \nu_i}{\xi} = \\
	= \interno{\alpha^{\mathcal E}(X,Y)}{\xi} = \interno{A^{\mathcal E}_\xi X}{Y}.
\end{multline*}

On the other side,
\begin{multline*}
	\interno{A^{\mathcal F}_{\nu_i} X}{Y} = \interno{\alpha^{\mathcal F}(X,Y)}{\nu_i} = \interno{\alpha^{\mathcal E}(X,Y) + \sum_{j=1}^\ell \interno{\BR_j X}{Y} \nu_j }{\nu_i} = \\
	=  \interno{\BR_i X}{Y} \|\nu_i\|^2 = k_i \interno{\BR_i X}{Y}.
\end{multline*}
Therefore
\begin{equation}\label{AF}
	A^{\mathcal F}_\xi = A^{\mathcal E}_\xi \quad \text{and} \quad A^{\mathcal F}_{\nu_i} = k_i \BR_i.
\end{equation}

\begin{afi}{$\mathcal{R}(X,Y)Z = A^{\mathcal F}_{\alpha^{\mathcal F}(Y,Z)}X - A^{\mathcal F}_{\alpha^{\mathcal F}(X,Z)}Y$, for any $X, Y, Z \in \G(TM)$.}
	\begin{align*}
		& \mathcal{R}(X,Y)Z \stackrel{\eqref{eq: Gauss}}{=} \sum_{i=1}^\ell k_i \left( \BR_i X \wedge \BR_i Y \right)Z + A^{\mathcal E}_{\alpha^{\mathcal E}(Y,Z)} X - A^{\mathcal E}_{\alpha^{\mathcal E}(X,Z)} Y =\\
		& \stackrel{\eqref{eq: F}, \eqref{AF}}{=} \cancel{\sum_{i=1}^\ell k_i \interno{\BR_i Y}{Z}\BR_i X } - \bcancel{ \sum_{i=1}^\ell k_i\interno{\BR_i X}{Z}\BR_i Y } + A^{\mathcal F}_{\alpha^{\mathcal F}(Y,Z)} X - \\
		& \phantom{\stackrel{\eqref{eq: F}, \eqref{AF}}{=}} - \cancel{ \sum_{i=1}^\ell \interno{\BR_i Y}{Z} k_i \BR_i X } - A^{\mathcal F}_{\alpha^{\mathcal F}(X,Z)} Y + \bcancel{\sum_{i=1}^\ell \interno{\BR_i X}{Z} k_i \BR_i Y} = \\
		& = A^{\mathcal F}_{\alpha^{\mathcal F}(Y,Z)} X - A^{\mathcal F}_{\alpha^{\mathcal F}(X,Z)} Y.
	\end{align*}
	Therefore the claim holds.
\end{afi}

\begin{afi}{$\left(\n_X \alpha^{\mathcal F}\right)(Y,Z) - \left(\n_Y \alpha^{\mathcal F}\right)(X,Z) = 0$, for any $X,Y,Z \in \G(TM)$.}
	\begin{align*}
		& \left(\n_X \alpha^{\mathcal F}\right)(Y,Z) = \n^{\mathcal F}_X \alpha^{\mathcal F}(Y,Z) - \alpha^{\mathcal F}\left(\n_X Y,Z\right) - \alpha^{\mathcal F}\left(Y,\n_X Z\right) = \\
		& \stackrel{\eqref{eq: F}}{=} \n^{\mathcal F}_X \left[ \alpha^{\mathcal E}(Y,Z) + \sum_{i=1}^\ell \interno{\BR_i Y}{Z} \nu_i \right] - \alpha^{\mathcal E}\left(\n_X Y,Z\right) - \sum_{i=1}^\ell \interno{\BR_i \n_X Y}{Z} \nu_i - \\
			& \phantom{\stackrel{\eqref{eq: F}}{=}} - \alpha^{\mathcal E}\left(Y,\n_X Z\right)  - \sum_{i=1}^\ell \interno{\BR_i Y}{\n_X Z}\nu_i =\\
		& = \n^{\mathcal F}_X \alpha^{\mathcal E}(Y,Z) + \sum_{i=1}^\ell \left[ (\interno{\n_X \BR_i Y}{Z} + \cancel{\interno{\BR_i Y}{\n_X Z}}) \nu_i + \interno{\BR_i Y}{Z} \n^{\mathcal F}_X \nu_i \right] - \\
			& \quad - \alpha^{\mathcal E}\left(\n_X Y,Z\right) - \sum_{i=1}^\ell \interno{\BR_i \n_X Y}{Z} \nu_i - \alpha^{\mathcal E}\left(Y,\n_X Z\right) - \sum_{i=1}^\ell \cancel{\interno{\BR_i Y}{\n_X Z}}\nu_i = \\
		& \stackrel{\eqref{eq: F}}{=} \n^{\mathcal E}_X \alpha^{\mathcal E}(Y,Z) + \sum_{i=1}^\ell \interno{\BS_i X}{\alpha^{\mathcal E}(Y,Z)} \nu_i + \sum_{i=1}^\ell \interno{\left(\n_X \BR_i\right) Y}{Z} \nu_i - \\
			& \quad -\sum_{i=1}^\ell \interno{\BR_i Y}{Z} k_i \BS_i X - \alpha^{\mathcal E}\left(\n_X Y,Z\right) - \alpha^{\mathcal E}\left(Y,\n_X Z\right).
	\end{align*}

	Hence
	\begin{multline}\label{eq: *}
		\left(\n^{\mathcal F}_X \alpha^{\mathcal F}\right)(Y,Z) = \left( \n_X \alpha^{\mathcal E} \right)(Y,Z) + \\
		+ \sum_{i=1}^\ell \left(\interno{\BS_i X}{\alpha^{\mathcal E}(Y,Z)} + \interno{\left(\n_X \BR_i\right) Y}{Z} \right)\nu_i - \sum_{i=1}^\ell \interno{\BR_i Y}{Z} k_i \BS_i X.
	\end{multline}
	
	Thus
	\begin{align*}
		& \left(\n^{\mathcal F}_Y \alpha^{\mathcal F}\right)(X,Z) \stackrel{\eqref{eq: *}}{=} \left( \n_Y \alpha^{\mathcal E} \right)(X,Z) + \\
			& \quad + \sum_{i=1}^\ell \left(\interno{\BS_i Y}{\alpha^{\mathcal E}(X,Z)} + \interno{\left(\n_Y \BR_i\right) X}{Z} \right)\nu_i -\sum_{i=1}^\ell \interno{\BR_i X}{Z} k_i \BS_i Y = \\
		& \stackrel{\eqref{eq: Codazzi}}{=} \left( \n_X \alpha^{\mathcal E} \right)(Y,Z) + \sum_{i=1}^\ell k_i\left[ \cancel{\interno{\BR_i X}{Z}\BS_i Y} - \interno {\BR_i Y}{Z} \BS_i X\right] + \\
			& \quad + \sum_{i=1}^\ell \left(\interno{\BS_i Y}{\alpha^{\mathcal E}(X,Z)} + \interno{\left(\n_Y \BR_i\right) X}{Z} \right)\nu_i - \sum_{i=1}^\ell \cancel{\interno{\BR_i X}{Z} k_i \BS_i Y} = \\
		& \stackrel{\eqref{eq: derivada R}}{=} \left( \n_X \alpha^{\mathcal E} \right)(Y,Z) + \sum_{i=1}^\ell \left(\interno{\BS_i Y}{\alpha^{\mathcal E}(X,Z)} + \interno{A^{\mathcal E}_{\BS_i X} Y + \BS_i^\t \alpha^{\mathcal E}(Y,X)}{Z} \right)\nu_i - \\
			& \quad - \sum_{i=1}^\ell k_i \interno {\BR_i Y}{Z} \BS_i X = \\
		& = \left( \n_X \alpha^{\mathcal E} \right)(Y,Z) + \sum_{i=1}^\ell \interno{A^{\mathcal E}_{\BS_i Y}X + A^{\mathcal E}_{\BS_i X} Y + \BS_i^\t \alpha^{\mathcal E}(X,Y)}{Z} \nu_i - \\
			& \quad - \sum_{i=1}^\ell k_i \interno {\BR_i Y}{Z} \BS_i X = \\
		& \stackrel{\eqref{eq: derivada R}}{=} \left( \n_X \alpha^{\mathcal E} \right)(Y,Z) + \sum_{i=1}^\ell \left( \interno{ (\n_X \BR_i) Y}{Z} + \interno{\BS_i X}{\alpha^{\mathcal E}(Y,Z)} \right)\nu_i - \\
			& \quad - \sum_{i=1}^\ell k_i \interno {\BR_i Y}{Z} \BS_i X \stackrel{\eqref{eq: *}}{=} \left(\n^{\mathcal F}_X \alpha^{\mathcal F}\right)(Y,Z).
	\end{align*}
	Therefore the claim holds.
\end{afi}

\begin{afi}{${\mathcal{R}^\cF}(X,Y) \eta = \alpha^{\mathcal F}\left(X, A^{\mathcal F}_\eta Y\right) - \alpha^{\mathcal F}\left(A^{\mathcal F}_\eta X, Y \right)$, for all $X,Y \in \G(TM)$ and all $\eta\in \G(\mathcal F)$.}
	It is enough to prove the claim in the cases $\eta \in \G(\mathcal E)$ and $\eta = \nu_i$, for some $i \in \{1, \cdots, \ell\}$. So, let $\xi \in \G( \mathcal E)$, hence
	\begin{align*}
		& {\cal R^F}(X,Y) \xi = \n^{\mathcal F}_X \n^{\mathcal F}_Y \xi - \n^{\mathcal F}_Y \n^{\mathcal F}_X \xi - \n^{\mathcal F}_{[X,Y]} \xi = \\
		& \stackrel{\eqref{eq: F}}{=} \n^{\mathcal F}_X \left( \n^{\mathcal E}_Y \xi + \sum_{i=1}^\ell \interno{\BS_i Y}{\xi} \nu_i\right) - \n^{\mathcal F}_Y \left( \n^{\mathcal E}_X \xi + \sum_{i=1}^\ell \interno{\BS_i X}{\xi} \nu_i\right) - \\
			& \quad - \n^{\mathcal E}_{[X,Y]} \xi - \sum_{i=1}^\ell \interno{\BS_i [X,Y]}{\xi} \nu_i = \\
		& \stackrel{\eqref{eq: F}}{=} {\cal R^E}(X,Y) \xi + \sum_{i=1}^\ell \interno{\BS_i X}{\n^{\mathcal E}_Y \xi} \nu_i - \sum_{i=1}^\ell \interno{\BS_i Y}{\n^{\mathcal E}_X \xi} \nu_i - \sum_{i=1}^\ell \interno{\BS_i [X,Y]}{\xi} \nu_i + \\
			& \quad + \sum_{i=1}^\ell (X\interno{\BS_i Y}{\xi} - Y\interno{\BS_i X}{\xi}) \nu_i - \sum_{i=1}^\ell \interno{\BS_i Y}{\xi} k_i \BS_i X + \sum_{i=1}^\ell \interno{\BS_i X}{\xi} k_i \BS_i Y = \\
		& = {\cal R^E}(X,Y) \xi + \sum_{i=1}^\ell \left( \cancel{\interno{\BS_i X}{\n^{\mathcal E}_Y \xi}} - \bcancel{\interno{\BS_i Y}{\n^{\mathcal E}_X \xi}} - \interno{\BS_i \n_X Y - \BS_i \n_Y X}{\xi}\right) \nu_i + \\
			& \quad + \sum_{i=1}^\ell \left(\interno{\n^{\mathcal E}_X \BS_i Y}{\xi} + \bcancel{\interno{\BS_i Y}{\n^{\mathcal E}_X \xi}} - \interno{\n^{\mathcal E}_Y \BS_i X}{\xi} - \cancel{\interno{\BS_i X}{\n^{\mathcal E}_Y \xi}} \right) \nu_i -\\
			& \quad -  \sum_{i=1}^\ell k_i ( \BS_i X \wedge \BS_i Y) \xi = \\
		& = {\cal R^E}(X,Y) \xi - \sum_{i=1}^\ell k_i ( \BS_i X \wedge \BS_i Y) \xi + \sum_{i=1}^\ell [ \interno{(\n_X \BS_i) Y}{\xi} - \interno{(\n_Y \BS_i) X}{\xi}]\nu_i = \\
		& \stackrel{\eqref{eq: Ricci}, \eqref{eq: derivada S}}{=} \alpha^{\mathcal E}\left(X, A^{\mathcal E}_\xi Y\right) - \alpha^{\mathcal E}\left(A^{\mathcal E}_\xi X, Y\right) + \\
		& + \sum_{i=1}^\ell \interno{\cancel{\BT_i \alpha^{\mathcal E}(X,Y)} - \alpha^{\mathcal E}(X,\BR_i Y) - \cancel{\BT_i \alpha^{\mathcal E}(Y,X)} + \alpha^{\mathcal E}(Y,\BR_i X)}{\xi} \nu_i = \\
		& = \alpha^{\mathcal E}\left(X, A^{\mathcal E}_\xi Y\right) - \alpha^{\mathcal E}\left(A^{\mathcal E}_\xi X, Y\right) - \sum_{i=1}^\ell \interno{A^{\mathcal E}_\xi X}{\BR_i Y} \nu_i + \sum_{i=1}^\ell \interno{A^{\mathcal E}_\xi Y}{\BR_i X}\nu_i = \\
		& \stackrel{\eqref{eq: F},\eqref{AF}}{=} \alpha^{\mathcal F}\left(X, A^{\mathcal E}_\xi Y\right) - \alpha^\cF\left(A^\cF_\xi X, Y\right).
	\end{align*}
	
	Now, for $i \in \{1, \cdots, \ell\}$,
	\begin{align*}
		& {\cal R^F}(X,Y) \nu_i = \n^{\mathcal F}_X \n^{\mathcal F}_Y \nu_i - \n^{\mathcal F}_Y \n^{\mathcal F}_X \nu_i - \n^{\mathcal F}_{[X,Y]} \nu_i = \\
		& \stackrel{\eqref{eq: F}}{=} - \n^{\mathcal F}_X k_i \BS_i Y + \n^{\mathcal F}_Y k_i \BS_i X + k_i \BS_i [X,Y] = \\
		& \stackrel{\eqref{eq: F}}{=} - k_i \n^{\mathcal E}_X \BS_i Y - k_i \sum_{j=1}^\ell \interno{\BS_j X}{\BS_i Y} \nu_j + k_i \n^{\mathcal E}_Y \BS_i X +  k_i \sum_{j=1}^\ell \interno{\BS_j Y}{\BS_i X} \nu_j + \\
			& \quad + k_i \BS_i \n_X Y - k_i \BS_j \n_Y X = \\
		& = -k_i \left( \n_X \BS_i \right)Y + k_i (\n_Y \BS_i) X - k_i \sum_{j=1}^\ell \left( \interno{\BS_i^\t\BS_j X}{Y} - \interno{\BS_i^\t\BS_j Y}{X} \right) \nu_j = \\
		& \stackrel{\eqref{eq: derivada S}, \eqref{eq: RST}, \eqref{eq: RST2}}{=} -k_i \left[ \cancel{\BT_i \alpha^{\mathcal E}(X,Y)} - \alpha^{\mathcal E}(X, \BR_i Y) \right] + k_i \left[ \cancel{\BT_i \alpha^{\mathcal E}(Y,X)} - \alpha^{\mathcal E}(Y, \BR_i X)\right] + \\
			& \quad + k_i\sum_{\substack{j=1\\ j \ne i}}^\ell \left( \interno{\BR_i\BR_j X}{Y} - \interno{\BR_i\BR_j Y}{X} \right)\nu_j - \interno{k_i\BR_i(\id - \BR_i)X}{Y} \nu_i + \\
			& \quad + k_i \interno{\BR_i(\id - \BR_i)Y}{X} \nu_i = \\
		& = \alpha^{\mathcal E}(X, k_i\BR_i Y) - \alpha^{\mathcal E}(Y, k_i\BR_i X) + \sum_{j=1}^\ell \left( \interno{k_i\BR_i\BR_j X}{Y} - \interno{k_i \BR_i\BR_j Y}{X} \right) \nu_j -   \\
			& \quad - \cancel{k_i\interno{\BR_i X}{Y} \nu_i} - \cancel{k_i \interno{\BR_i Y}{X} \nu_i} = \\
		& = \alpha^{\mathcal E}(X, k_i\BR_i Y) + \sum_{j=1}^\ell \interno{\BR_j X}{k_i\BR_i Y} \nu_j - \\
			& \quad - \alpha^{\mathcal E}(Y, k_i\BR_i X) - \sum_{j=1}^\ell \interno{\BR_j Y}{k_i\BR_i X} \nu_j = \\
		& \stackrel{\eqref{AF}, \eqref{eq: F}}{=} \alpha^{\mathcal F}\left(X, A_{\nu_i}^{\mathcal F} Y \right) - \alpha^{\mathcal F}\left(Y, A_{\nu_i}^{\mathcal F} X\right).
	\end{align*}
	Therefore the claim holds.
\end{afi}

Now, let $n = \sum\limits_{i=1}^\ell \left(n_i + \upsilon(k_i)\right)$ and $t = \sum\limits_{i=1}^\ell \tau(k_i)$. Since Claims 1, 2 and 3 hold, and because of Bonnet Theorem for isometric immersions in $\R_t^n$ (see \cite{LTV}), there is an isometric immersion $F \colon M \to \R_t^n$ and a vector bundle isometry $\tilde\Phi \colon \cF \to T_F^\perp M$ such that $\alpha_F = \tilde\Phi \alpha^{\mathcal F}$ and $\nbarperp \tilde \Phi = \tilde \Phi \n^{\mathcal F}$, where $\nbarperp$ is the normal connection in $T_F^\perp M$.

Let $\mathcal G := TM \oplus \mathcal F$ be the Whitney sum and lets define the following connection in $\cal G$:
\begin{align*}
	&\n^{\cal G}_X Y = \n_X Y + \alpha^{\mathcal F}(X,Y), && \forall X,Y \in \G(TM); \\
	&\n^{\cal G}_X \eta = - A^{\mathcal F}_\eta X + \n^{\mathcal F}_X \eta, && \forall X \in \G(TM), \ \text{and all} \ \eta \in \G \left( \cal F \right).
\end{align*}

Besides, for each $i \in \{1, \cdots \ell\}$, let $P_i \colon \mathcal{G} \to \mathcal{G}$ be given by
\begin{align}\label{Pi}
	\left.P_i\right|_{TM} &= \BR_i + \BS_i, & \left.P_i\right|_{\mathcal E} &= \BS_i^\t + \BT_i, & P_i (\nu_j) &= \delta_{ij} \nu_i.
\end{align}

\begin{afi}{$P_iP_j = \delta_{ij} P_i$.}
	Let $X \in TM$ and $\xi \in \cE$. If $i \ne j$, then
	\begin{align*}
		P_iP_j X &= P_i (\BR_j X + \BS_j X) = P_i \BR_j X + P_i \BS_j X = \\
			&= \cancel{\BR_i\BR_j X} + \bcancel{\BS_i\BR_j X} + \cancel{\BS_i^\t\BS_j X} + \bcancel{\BT_i\BS_j X} \stackrel{\eqref{eq: RST2}}{=} 0; \\
		P_i P_j \xi &= P_i(\BS_j^\t \xi + \BT_j \xi) = \cancel{\BR_i\BS_j^\t \xi} + \bcancel{\BS_i\BS_j^\t \xi} + \cancel{\BS_i^\t\BT_j \xi} + \bcancel{\BT_i\BT_j \xi} \stackrel{\eqref{eq: RST2}}{=} 0; \\
		P_i P_j \nu_k &= P_i \delta_{jk} \nu_j = \delta_{jk} \delta_{ij} \nu_i = 0.
	\end{align*}

	If $i = j$, then
	\begin{align*}
		{P_i}^2 X &= P_i (\BR_i X + \BS_i X) = \BR_i^2 X + \BS_i\BR_i X + \BS_i^\t\BS_i X + \BT_i\BS_i X = \\
			& \stackrel{\eqref{eq: RST}}{=} \BR_i^2 X + \BS_i \BR_i X + \BR_i(\id - \BR_i) X + \BS_i(\id - \BR_i) X = \\
			&=\BR_i X + \BS_i X = P_i X; \\
		{P_i}^2 \xi &= P_i(\BS_i^\t \xi + \BT_i \xi) = \BR_i\BS_i^\t \xi + \BS_i\BS_i^\t \xi + \BS_i^\t\BT_i \xi + \BT_i^2 \xi = \\
			& \stackrel{\eqref{eq: RST}}{=} \BR_i\BS_i^\t \xi + \BT_i(\id - \BT_i) \xi + (\id - \BR_i)\BS_i^\t \xi + \BT_i^2 \xi = \\
			&= \BS_i^\t \xi + \BT_i \xi = P_i \xi; \\
		{P_i}^2 \nu_j &= P_i \delta_{ij} \nu_i = \delta_{ij} \nu_i = P_i \nu_j.
	\end{align*}
	
	Therefore the claim holds.	
\end{afi}

\begin{afi}{Each $P_i$ is a parallel tensor, that is, $\n^{\cal G}P_i = P_i \n^{\cal G}$.}
	Let $X, Y \in \G(TM)$ and $\xi \in \G(\cF)$, thus
	\begin{align*}
		&\n^{\cal G}_X P_i Y = \n^{\cal G}_X (\BR_i Y + \BS_i Y) = \\
		& = \n_X \BR_i Y + \alpha^{\mathcal F}(X, \BR_i Y) - A_{\BS_i Y}^{\mathcal F} X + \n^{\mathcal F}_X \BS_i Y = \\
		& \stackrel{\eqref{eq: F}, \eqref{AF}}{=} \n_X \BR_i Y + \alpha^{\mathcal E}(X, \BR_i Y) + \sum_{j=1}^\ell \interno{\BR_j X}{\BR_i Y} \nu_j - A_{\BS_i Y}^{\mathcal E} X + \\
			& \quad +  \n^{\mathcal E}_X \BS_i Y + \sum_{j=1}^\ell \interno{\BS_j X}{\BS_i Y}\nu_j = \\
		&\stackrel{\eqref{eq: RST}, \eqref{eq: RST2}}{=} \n_X \BR_i Y + \alpha^{\mathcal E}(X, \BR_i Y) + \sum_{\substack{j=1\\j \ne i}}^\ell \cancel{\interno{\BR_i\BR_j X}{Y} \nu_j} + \interno{\BR_i^2 X}{Y} \nu_i - \\
			& \quad - A_{\BS_i Y}^{\mathcal E} X + \n^{\mathcal E}_X \BS_i Y - \sum_{\substack{j=1\\j \ne i}}^\ell \cancel{\interno{\BR_i\BR_j X}{Y}\nu_j} + \interno{\BR_i(\id - \BR_i) X}{Y} \nu_i = \\
		&= \n_X \BR_i Y + \alpha^{\mathcal E}(X, \BR_i Y) - A_{\BS_i Y}^{\mathcal E} X + \n^{\mathcal E}_X \BS_i Y + \interno{\BR_i X}{Y} \nu_i = \\
		& \stackrel{\eqref{eq: derivada R}, \eqref{eq: derivada S}}{=} \BR_i \n_X Y + \cancel{A^{\mathcal E}_{\BS_i Y} X} + \BS_i^\t\alpha^{\mathcal E}(X, Y) + \bcancel{\alpha^{\mathcal E}(X, \BR_i Y)} - \cancel{A_{\BS_i Y}^{\mathcal E} X} +  \\
			& \quad + \BS_i \n_X Y + \BT_i \alpha^{\mathcal E}(X,Y) - \bcancel{\alpha^{\mathcal E}(X, \BR_i Y)} + \interno{\BR_i X}{Y} \nu_i = \\
		& = \BR_i \n_X Y + \BS_i^\t\alpha^{\mathcal E}(X, Y) + \BS_i \n_X Y + \BT_i \alpha^{\mathcal E}(X,Y) + \interno{\BR_i X}{Y} \nu_i = \\
		&= P_i \left( \n_X Y + \alpha^{\mathcal E}(X,Y) + \sum_{j=1}^\ell \interno{\BR_j X}{Y} \nu_j \right) = P_i \n^{\cal G}_X Y.
	\end{align*}
		
	\begin{align*}
		&\n^{\cal G}_X P_i \xi = \n^{\cal G}_ X \left( \BS_i^\t \xi + \BT_i \xi \right) = \n_X \BS_i^\t \xi + \alpha^{\mathcal F}\left(X, \BS_i^\t \xi\right) - A^{\mathcal F}_{\BT_i \xi} X + \n^{\mathcal F}_X \BT_i\xi = \\
		&\stackrel{\eqref{eq: F},\eqref{AF}}{=} \n_X \BS_i^\t \xi + \alpha^{\mathcal E}\left(X, \BS_i^\t \xi \right) + \sum_{j=1}^\ell \interno{\BR_j X}{\BS_i^\t \xi}\nu_j - A^{\mathcal E}_{\BT_i \xi} X + \n^{\mathcal E}_X \BT_i\xi + \\
			&\quad + \sum_{j=1}^\ell \interno{\BS_j X}{\BT_i\xi}\nu_j = \\
		&= \n_X \BS_i^\t \xi + \alpha^{\mathcal E}\left(X, \BS_i^\t \xi \right) +  \sum_{\substack{j=1\\j \ne i}}^\ell \cancel{\interno{\BS_i\BR_j X}{\xi}\nu_j} + \interno{\BS_i\BR_i X}{\xi} \nu_i - A^{\mathcal E}_{\BT_i \xi} X + \\
			& \quad + \n^{\mathcal E}_X \BT_i\xi + \sum_{\substack{j=1\\j \ne i}}^\ell \cancel{\interno{\BT_i\BS_j X}{\xi}\nu_j} + \interno{\BT_i\BS_i X}{\xi} \nu_i = \\
		& \stackrel{\eqref{eq: RST}, \eqref{eq: RST2}}{=} \n_X \BS_i^\t \xi + \alpha^{\mathcal E}\left(X, \BS_i^\t \xi \right) - A^{\mathcal E}_{\BT_i \xi} X + \n^{\mathcal E}_X \BT_i\xi + \interno{\BS_i X}{\xi} \nu_i = \\
		& \stackrel{\eqref{eq: derivada S},\eqref{eq: derivada T}}{=} \BS_i^\t \n^{\mathcal E}_X \xi + \cancel{A^{\mathcal E}_{\BT_i \xi} X} - \BR_i A^{\mathcal E}_\xi X + \bcancel{\alpha^{\mathcal E}\left(X, \BS_i^\t \xi \right)} - \cancel{A^{\mathcal E}_{\BT_i \xi} X} + \BT_i \n^{\mathcal E}_X \xi -  \\
			& \quad - \BS_i A_\xi X - \bcancel{\alpha^{\mathcal E}\left(X,\BS_i^\t\xi\right)} + \interno{\BS_i X}{\xi} \nu_i = \\
		& = P_i \left(-A_\xi^{\mathcal E} X + \n_X^{\mathcal E} \xi + \sum_{j=1}^\ell \interno{\BS_j X}{\xi} \nu_j \right) = P_i \left( \n^{\cal G}_X \xi\right).
	\end{align*}
		
	\begin{align*}
		\n_X^{{\cal G}} P_i \nu_j & = \n_X^{{\cal G}} \delta_{ij} \nu_i = - \delta_{ij} A_{\nu_i}^{\mathcal F} X + \delta_{ij} \n^{\mathcal F}_X \nu_i =\\
			&\stackrel{\eqref{eq: F},\eqref{AF}}{=} -\delta_{ij}k_i \BR_i X - \delta_{ij} k_i \BS_i X = -\delta_{ij}k_i P_i(X). \\
		P_i \n_X^{{\cal G}} \nu_j &= P_i(-k_j A_{\nu_j}^{\mathcal F} X + \n^{\mathcal F}_X \nu_j) = P_i (-k_j \BR_j X - k_j \BS_j X) = \\
			& = P_i (-k_j P_j (X)) = -\delta_{ij}k_j P_i(X).
	\end{align*}
		
	Since $\n^{\cal G}_X P_i Y = P_i \n^{\cal G}_X Y$, $\n^{\cal G}_X P_i \xi = P_i \n^{\cal G}_X \xi$ and $\n_X^{{\cal G}} P_i \nu_j = P_i \n_X^{{\cal G}} \nu_j$, for all $X, Y \in \G(TM)$, all $\xi \in \G(\cE)$ and all $i,j \in \{1, \cdots, \ell\}$, then $P_i$ is a parallel tensor.
\end{afi}
	
\begin{afi}{$P_i = P_i^\t$.}
	It follows from straightforward calculations.
\end{afi}

Lets consider $\hat\Phi \colon \mathcal{G} \to F^*T \R_t^N$ given by $\hat\Phi|_{TM} = F_* \colon TM \to F_*TM$ and $\hat\Phi|_{\mathcal F} = \tilde \Phi \colon \cF \to T_F^\perp M$.

\begin{afi}{$\hat\Phi$ is parallel vector bundle isometry, that is, $\hat\Phi \n^{\cal G} = \ntil \hat\Phi$, where $\ntil$ is the connection in $\R_t^N$.} \label{Pi parallel}
	Let $X,Y \in \G(TM)$ and $\eta \in \Gamma(\cF)$. Thus,
	\begin{multline*}
		\interno{A^{\mathcal F}_\eta X}{Y} = \interno{\alpha^{\mathcal F}(X, Y)}{\eta} = \interno{\tilde\Phi \alpha^{\mathcal F}(X, Y)}{\tilde\Phi \eta} = \\
		= \interno{\alpha_F(X,Y)}{\tilde\Phi \eta} = \interno{A^F_{\tilde\Phi \eta} X}{Y}.
	\end{multline*}
	Hence $A^{\mathcal F}_\eta = A^F_{\tilde\Phi\eta}$ and
	\begin{align*}
		\hat\Phi \n^{\cal G}_X Y &= \hat\Phi\left( \n_X Y + \alpha^{\mathcal F}(X,Y) \right) = F_* \n_X Y + \tilde\Phi \left(\alpha^{\mathcal F}(X,Y) \right) = \\
		&= F_* \n_X Y + \alpha_F(X,Y) = \ntil_X F_* Y = \ntil_X \hat\Phi Y. \\
		\hat\Phi \n^{\cal G}_X \eta &= \hat\Phi \left( -A^{\mathcal F}_\eta X + \n^{\mathcal F}_X \eta \right) = -F_*A^{\mathcal F}_\eta X + \tilde\Phi \n^{\mathcal F}_X \eta = \\
		&= -F_*A^F_{\tilde\Phi\eta} X +  \nperp_X \tilde\Phi \eta = \ntil_X \tilde\Phi\eta = \ntil_X \hat\Phi\eta.
	\end{align*}
	
	Thus $\hat \Phi$ is parallel.
\end{afi}

Since each $P_i$ is parallel, $P_i(\mathcal{G})$ is a parallel vector sub-bundle of $\cal G$. Besides, $\interno{P_i X}{P_j Y} = \interno{P_jP_i X}{Y} = \delta_{ij} \interno{P_i X}{Y}$. Then $P_i(\mathcal{G})$ and $P_j(\mathcal{G})$ are orthogonal, if $i \ne j$. Hence each $\hat\Phi[P_i(\mathcal{G})]$ is a constant vector subspace of $\R_t^N$ and they are orthogonal. Besides, because $P_i(\nu_j) =  \delta_{ij} \nu_i$ and $\|\nu_i\|^2 = k_i$, then $\hat\Phi[P_i(\mathcal{G})] = \R^{N_i} = \R_{\tau(k_i)}^{N_i}$, where $N_i$ is the dimension of $P_i(\mathcal{G})$.

\begin{afi}{$\R_t^N = \R^{N_1} \op \cdots \op \R^{N_\ell}$}
	Since $\RN = F_* TM \op T_F^\perp M$ and $\hat\Phi \colon \mathcal{G} \to F^*T\R_t^N$ is a vector bundle isometry, we just have to show that $\mathcal{G} = P_1(\mathcal{G}) \op \cdots \op P_\ell(\mathcal{G})$. On the other side, we know that each pair $P_i(\mathcal{G})$ and $P_j(\mathcal{G})$ are orthogonal sub-fiber bundles. So we just have to show that $\z = P_1(\z) + \cdots + P_\ell(\z)$, for any $\z \in \mathcal{G}$.
	
	Let $X \in TM$ and $\xi \in \cal E$, thus
	\begin{align*}
		& X = \id(X) + 0(x) \stackrel{\eqref{somas}}{=} \sum_{i=1}^\ell \BR_i X + \sum_{i=1}^\ell \BS_i X = \sum_{i=1}^\ell P_i(X), \\
		& \xi = 0(\xi) + \id(\xi) \stackrel{\eqref{somas}}{=} \sum_{i=1}^\ell \BS_i^\t \xi + \sum_{i=1}^\ell \BT_i \xi = \sum_{i=1}^\ell P_i(\xi), \\
		& \nu_j \stackrel{\eqref{Pi}}{=} \sum_{i=1}^\ell P_i(\nu_j).
	\end{align*}
	
	Therefore $\z = P_1(\z) + \cdots + P_\ell(\z)$, for any $\z \in \mathcal{G}$.
\end{afi}

For each $i \in \{ 1, \cdots, \ell\}$, let $\pi_i \colon \R_t^N \to \R^{N_i}$ be the orthogonal projection.

\begin{afi}{$\pi_i \circ \hat\Phi = \hat\Phi \circ P_i$.}
	If $\z \in \mathcal{G}$, then $\z = \z_i + \z_i^\perp$, with $\z_i \in P_i(\mathcal{G})$ and $\z_i^\perp \in P_i(\mathcal{G})^\perp$. Thus $\z_i = P_i \z$ and $P_i\left(\z_i^\perp\right) = 0$, therefore $\left(\hat\Phi \circ P_i\right)(\z) = \hat\Phi\left(P_i(\z_i)\right) + \hat\Phi \left(P_i\left(\z_i^\perp\right)\right) = \hat\Phi\left(P_i(\z_i)\right)$.

	On the other side, since $\hat\Phi[P_i(\mathcal{G})] = \R^{N_i}$,  $\left(\pi_i \circ \hat\Phi\right)(\z) = \pi_i \left(\hat\Phi(\z_i)\right) + \pi_i\left(\hat\Phi \left(\z_i^\perp \right) \right) = \pi_i \left(\hat\Phi\left(P_i(\z_i)\right)\right) = \hat\Phi(P_i(\xi))$.
\end{afi}

So
\begin{align}
	& \pi_i\circ F_* = \pi_i \circ \hat\Phi|_{TM} = \hat\Phi\circ \left.P_i\right|_{TM} = \hat\Phi (\BR_i + \BS_i) = F_* \BR_i + \hat\Phi \BS_i, \label{piPhi1}\\
	& \pi_i \circ \hat\Phi|_{\mathcal E} = \hat\Phi \circ \left.P_i\right|_{\mathcal E} = \hat\Phi\left(\BS_i^\t + \BT_i\right) = F_*\BS_i^\t + \hat\Phi \BT_i, \label{piPhi2}\\
	& \pi_i \hat\Phi (\nu_j) = \hat\Phi P_i (\nu_j) = \delta_{ij} \hat\Phi(\nu_j). \label{piPhi3}
\end{align}

By Claim \ref{Pi parallel},
\begin{align*}
	& \ntil_X \tilde\Phi(\nu_i) = \ntil_X \hat\Phi(\nu_i) = \hat\Phi\n_X^{\cal G} \nu_i = \hat\Phi\left(-A^{\mathcal F}_{\nu_i} X + \n^{\mathcal F}_X \nu_i \right) =\\
	& \stackrel{\eqref{AF},\eqref{eq: F}}{=} \hat\Phi\left( -k_i\BR_i X - k_i\BS_i X \right) = -k_i\left(\hat\Phi\circ P_i\right)(X) = \\
	& = - k_i\left( \pi_i \circ \hat\Phi \right)(X) = -k_i \pi_i (F_* X).
\end{align*}
Therefore $\ntil_X \tilde\Phi(\nu_i) = - k_i\pi_i(F_* X)$.

Let $\z_i := \pi_i \circ F + \frac{1}{k_i} \tilde\Phi(\nu_i)$. Because of \eqref{piPhi3}, $\z_i \in \R^{N_i}$, and since $\ntil_X \tilde\Phi(\nu_i) = - k_i\pi_i(F_* X)$, $\ntil_X \z_i = 0$. Hence each $\z_i$ is constant in $\RN$.

Let $P := \sum\limits_{i=1}^\ell \z_i = F + \sum\limits_{i=1}^\ell \frac{1}{k_i}\tilde \Phi(\nu_i)$, $\tilde F := F - P$ and $\tilde{\z}_i := \pi_i \circ \tilde F + \frac{1}{k_i} \tilde \Phi(\nu_i)$. With the same calculations made for $\z_i$, we can show that $\tilde{\z}_i$ is constant.

\begin{afi}{$\tilde{\z}_i = 0$.}
	In deed,
	\begin{multline*}
		\tilde{\z}_i = \pi_i\left(\tilde F(x_0)\right) + \frac{1}{k_i}\tilde \Phi(\nu_i(x_0)) = \\
		= \pi_i(F(x_0)) - \pi_i(P) + \frac{1}{k_i}\tilde \Phi(\nu_i(x_0)) = \z_i - \pi_i(P).
	\end{multline*}
	But $\pi_i(P) = \pi_i \left(\sum\limits_{j=1}^\ell \z_j\right) = \z_i$. Therefore $\tilde{\z}_i = 0$.
\end{afi}

\begin{afi}{Replacing $F$ by $\tilde F$, if necessary, we can assume that $F(M) \subset \XO$, with $\o{i} \subset \R^{N_i}$ and $n_i + \upsilon(k_i) = N_i$.}	
	
	Replacing $F$ by $\tilde F$, if necessary, we can assume that each $\z_i = 0$, that is,
	\[\pi_i \circ F + \frac{1}{k_i}\tilde\Phi(\nu_i) = 0 \Rightarrow \left\|\pi_i \circ F \right\|^2 = \frac{\left\| \tilde\Phi(\nu_i)\right\|^2}{k_i^2} = \frac{k_i}{k_i^2} = \frac{1}{k_i}.\]
	Therefore, if $\o{i}$ is the connected component of $\ss{i} \subset \R^{N_i}$ such that $\pi_i(F(M))\subset \o{i}$, then $(\pi_i\circ F)(M) \subset \o{i}$ and $F(M) \subset \XO$.
\end{afi}

Since $F(M) \subset \XO$, there is an isometric immersion $f \colon M \to \XO$ such that $F = \imath \circ f$, where $\imath \colon \XO \to \RN$ is the canonical inclusion. Besides, because $\pi_i \circ F + \frac{1}{k_i}\tilde\Phi(\nu_i) = 0$, $\tilde\Phi(\nu_i) = -k_i(\pi_i\circ F) = \nu_i^F$.

Let $\nperp$ be the normal connection of $f$ in $T_f^\perp M$. We must to show that $\Phi = \tilde\Phi|_{\mathcal E}$ is a vector bundle isometry such $\Phi(\cE) = T_f^\perp M$ and that
\begin{align*}
	& \alpha^f = \Phi \alpha^\cE, && \nperp \Phi = \Phi \n^\cE, \\
	& \pi_i \circ f_* = f_* \BR_i + \Phi \BS_i, && \left.\pi_i\right|_{T^\perp M} = f_* \BS_i^\t \Phi^{-1} + \Phi \BT_i \Phi^{-1}.
\end{align*}

Since $T_F^\perp M = T_f^\perp M \op \spa\set{\pi_i\circ F}{k_i \ne 0}$ and $\tilde\Phi(\nu_i) = - k_i(\pi_i\circ F)$, then $\spa\set{\pi_i\circ F}{k_i \ne 0} = \tilde\Phi(\spa[\nu_1, \cdots, \nu_\ell])$, thus $\tilde\Phi(\cE) = T_f^\perp M$.

Because $J = \set{i \in \{1, \cdots, \ell\}}{k_i \ne 0}$,
\begin{align*}
	& \af{X}{Y} = \aF{X}{Y} - \sum_{i \in J} \interno{\aF{X}{Y}}{\nu_i^F}\frac{\nu_i^F}{\left\|\nu_i^F\right\|^2} = \\
	& = \tilde\Phi\left( \alpha^{\mathcal F}(X,Y) \right) - \sum_{i\in J} \interno{\tilde\Phi\left(\alpha^{\mathcal F}(X,Y)\right)}{\tilde\Phi(\nu_i)}\frac{\tilde\Phi(\nu_i)}{k_i} = \\
	& = \tilde\Phi\left( \alpha^{\mathcal F}(X,Y) - \sum_{i=1}^\ell\interno{\alpha^{\mathcal F}(X,Y)}{\nu_i}\frac{\nu_i}{k_i} \right) \stackrel{\eqref{eq: F},\eqref{AF}}{=} \tilde\Phi\left( \alpha^{\mathcal E}(X,Y) \right). \\
	& \nperp_X \Phi \xi = \nbarperp_X \tilde\Phi\xi - \sum_{i \in J} \interno{\nbarperp_X \tilde\Phi\xi}{\nu_i^F}\frac{\nu_i^F}{\left\|\nu_i^F\right\|^2} = \\
	& = \tilde\Phi \n_X^{\mathcal F} \xi - \sum_{i \in J} \interno{\tilde\Phi \n_X^{\mathcal F} \xi}{\tilde\Phi \nu_i}\frac{\tilde\Phi \nu_i}{k_i} = \\
	& = \tilde\Phi \left( \n_X^{\mathcal F} \xi - \sum_{i \in J} \interno{\n_X^{\mathcal F} \xi}{\nu_i}\frac{\nu_i}{k_i} \right) \stackrel{\eqref{eq: F}}{=} \tilde\Phi \left( \n_X^{\mathcal E} \xi \right). \\
	& \pi_i (f_* X) = \pi_i (F_* X) = \left(\pi_i\circ\hat\Phi\right)(X) = \left(\hat\Phi\circ P_i\right)(X) = \hat\Phi (\BR_i X + \BS_i X) = \\
	& = F_*\BR_i X + \tilde\Phi(\BS_i X) = f_* \BR_i X + \Phi (\BS_i X).
\end{align*}

Last, if $\z \in T^\perp M = \Phi(\cE)$, then $\z = \Phi \xi$, for some $\xi \in \cE$. Thus
\begin{align*}
	& \pi_i(\z) = \pi_i(\Phi\xi) = \left(\pi_i\circ\hat\Phi\right)(\xi) = \left(\hat\Phi\circ P_i\right)(\xi) = \hat\Phi\left(\BS_i^\t\xi + \BT_i\xi \right) =\\
	& =  F_*\BS_i^\t\xi + \tilde\Phi\BT_i\xi = f_*\BS_i^\t \Phi^{-1}(\z) + \Phi\BT_i\Phi^{-1}(\z).
\end{align*}
Therefore $\left.\pi_i\right|_{T^\perp M} = f_*\BS_i^\t\Phi^{-1} + \Phi\BT_i\Phi^{-1}$. \hfill $\Box$
		\subsection{Uniqueness}

Let $f$, $g$, and $\Phi$ be like in \textsl{(II)} of Theorem \ref{Bonnet}. So, lets consider $F := \imath\circ f$, $G := \imath\circ g$ and $\tilde\Phi \colon T_F^\perp M \to T_G^\perp M$ given by $\tilde\Phi \imath_* \xi = \imath_*\Phi \xi$, for all $\xi \in T_f^\perp M$, and $\tilde\Phi \left(\nu_i^F\right) = \nu_i^G$, with $\nu_i^F := -k_i (\pi\circ F)$ and $\nu_i^G := -k_i (\pi_i\circ G)$. Hence
\begin{align*}
	& \tilde\Phi \left(\aF{X}{Y}\right) \stackrel{\text{Lemma \ref{lem: aF}}}{=} \tilde\Phi\left(\imath_*\af{X}{Y} + \sum_{i=1}^\ell \interno{\BR_i^f X}{Y} \nu_i\right) = \\
	& = \imath_*\Phi \left(\af{X}{Y}\right) + \sum_{i=1}^\ell \interno{\BR_i^g X}{Y}\tilde\Phi \left(\nu_i^F\right) = \\
	& = \imath_*\alpha_g(X,Y) + \sum_{i=1}^\ell \interno{\BR_i^g X}{Y} \nu_i^G \stackrel{\text{Lemma \ref{lem: aF}}}{=} \alpha_G(X,Y).
\end{align*}

On the other side, if $\xi \in \G\left(T_f^\perp M\right)$, then
\begin{align*}
	& \tilde\Phi \left(^F\nbarperp_X \imath_*\xi\right) \stackrel{\eqref{nbarperpxi}}{=} \tilde\Phi \left( \imath_* \left(^f\nperp_X \xi\right) + \sum_{i=1}^\ell \interno{\BS^f_i X}{\xi}\nu^F_i\right) = \\
	& = \imath_* \Phi \left(^f\nperp_X \xi\right) + \sum_{i=1}^\ell \interno{\BS^f_i X}{\xi}\nu^G_i = \imath_* \left(^g\nperp_X \Phi\xi\right) + \sum_{i=1}^\ell \interno{\Phi\BS^f_i X}{\Phi\xi}\nu^G_i = \\
	& = \imath_* \left(^g\nperp_X \Phi\xi\right) + \sum_{i=1}^\ell \interno{\BS_i^g X}{\Phi\xi}\nu^G_i = {^G\nbarperp_X \imath_*\Phi\xi}.
\end{align*}
Therefore $\tilde\Phi\alpha_F = \alpha_G$ and $\tilde\Phi{^F\nbarperp} = {^G\nbarperp}\tilde\Phi$.

From the uniqueness part of Bonnet Theorem for isometric immersions in $\RN = \R_t^n$, there is an isometry $\tau \colon \RN \to\RN$ such that $G = \tau \circ F$ and $\left.\tau_*\right|_{T_F^\perp M} = \tilde\Phi$. Lets denote $\tau(Z) = B(Z) + C$, where $C \in \RN$ is constant and $B = \tau_* \in \O_t(n)$ is an orthogonal transformation. Thus $B\left(\nu_i^F\right) = \tau_*\left(\nu_i^F\right) = \tilde\Phi\left(\nu_i^F\right) = \nu_i^G$, that is, $B(\pi_i \circ F) = \pi_i \circ G$, if $k_i \ne 0$.

On the other side, since $G = \tau \circ F$, then $G_* = \tau_*F_* = BF_*$.

Now, if $X \in TM$ and $\xi \in T_f^\perp M$, then
\begin{align*}
	& \interno{\Phi\BS_i^f X}{\xi} = \interno{\BS_i^f X}{\Phi^{-1} \xi} = \interno{X}{{^f\BS_i^\t} \Phi^{-1} \xi}; \\
	& \interno{\Phi\BS_i^f X}{\xi} = \interno{\BS_i^g X}{\xi} = \interno{X}{{^g\BS_i^\t} \xi}.
\end{align*}
Therefore $^f\BS_i^\t = {^g\BS_i^t}\Phi$. So, if $\xi \in T_f^\perp M$, then
\begin{align*}
	& B(\imath_*\xi) = B\left( \imath_*f_*{^f\BS_i^\t}\xi + \imath_*\BT_i^f\xi \right) = \tau_*F_*{^g\BS_i^\t}(\Phi\xi) + \tau_*\imath_*\BT_i^f\xi = \\
	& = G_*{^g\BS_i^\t}(\Phi\xi) + \tilde \Phi \imath_*\BT_i^f\xi = G_*{^g\BS_i^\t}(\Phi\xi) + \imath_* \Phi \BT_i^f\xi = \\
	& = G_*{^g\BS_i^\t}(\Phi\xi) + \imath_*\BT_i^g(\Phi\xi) = \imath_*\Phi\xi.
\end{align*}
As a result, $B\circ\imath_* = \imath_*\Phi$.

Given $\z \in \RN$ and $p \in M$, we know that there are $X \in T_p^f M$ and $\xi \in {^fT_p^\perp M}$ such that $\z = F_* X + \imath_* \xi + \sum\limits_{i \in J} \interno{\z}{\nu_i^F}\frac{\nu_i^F(p)}{k_i}$. In this way,
\begin{align*}
	& B(\pi_i \z) = B\left(\pi_i\left( F_*X + \imath_*\xi + \sum_{j \in J} \interno{\z}{\nu_j^F}\frac{\nu_j^F(p)}{k_j} \right)\right) = \\
	& = B\left( F_*\BR_i^f X + F_*{^f\BS_i^\t}\xi + \imath_*\BS_i^f X + \imath_*\BT_i^f \xi \right) + \interno{\z}{\nu_i^F} \frac{B\left(\nu_i^F(p)\right)}{k_i} = \\
	& = G_*\left( \BR_i^f X + {^f\BS_i^\t} \xi\right) + \imath_*\Phi\left( \BS_i^f X + \BT_i^f \xi\right) + \interno{\z}{\nu_i^F}\frac{\nu_i^G(p)}{k_i} = \\
	& = G_*\left( \BR_i^g X + {^g\BS_i^\t} \Phi(\xi)\right) + \imath_*\left( \BS_i^g X + \BT_i^g \Phi(\xi)\right) + \interno{\z}{\nu_i^F} \frac{\nu_i^G(p)}{k_i} = \\
	& = \imath_*\left( g_* \BR_i^g X + \imath_* \BS_i^g X \right) + \imath_* \left( g_*{^g\BS_i^\t} \Phi(\xi) + \BT_i^g \Phi(\xi)\right) + \interno{\z}{\nu_i^F} \frac{\nu_i^G(p)}{k_i} = \\
	& = \pi_i G_* X + \imath_* \pi_i (\Phi\xi) + \interno{\z}{\nu_i^F}\frac{\nu_i^G{p}}{k_i} = \\
	& = \pi_i (B(F_*X)) + \pi_i(\imath_*\Phi(\xi)) + \interno{\z}{\nu_i^F}\frac{\nu_i^G{p}}{k_i} = \\
	& = \pi_i \left( B(F_*X) + B(\imath_*\xi) + \sum_{j \in J}^\ell \interno{\z}{\nu_j^F}\frac{B(\nu_j^F(p))}{k_j}\right) = \pi_i (B(\z)).
\end{align*}
We have just showed that $B \circ \pi_i = \pi_i \circ B$, if $k_i \ne 0$. If $k_i = 0$, the calculations above are simpler.

Let $C_i := \pi_i(C)$. Since $B(\pi_i \circ F) = \pi_i \circ G$, then
\begin{multline*}
	B(\pi_i(F(p))) = \pi_i(G(p)) = \pi_i (B(F(p)) + C) = \\
	= \pi_i(B(F(p))) + C_i = B(\pi_i(F(p))) + C_i \Rightarrow C_i = 0.
\end{multline*}
Thus $\tilde\Phi\left(\o{i}\right) = B(\o{i}) = \o{i}$, for each $i \in \{1, \cdots, \ell\}$. Therefore $g = \varphi \circ f$, where $\varphi = \tilde\Phi|_{\hO}$ is a isometry of $\hO$ which fix each $\o{i}$. \hfill $\Box$

\bibliographystyle{acm}
\addcontentsline{toc}{section}{References}
\bibliography{/home/bruno/Dropbox/Matematica/bibliografia}
%

\end{document}